\newtheoremstyle{TheoremNum}
	{\topsep}{\topsep}{}{0pt}{\sffamily}{. }{5pt}             %%% space between body and thm
\theoremstyle{TheoremNum}
\newtheorem{thmn}{Theorem}
\newtheorem{lemman}{Lemma}
\newtheorem{obsn}{Observation}
\newcommand{\R}{\mathbb{R}}
\newcommand{\cC}{\mathcal{C}}
\newcommand{\cTC}{\mathcal{TC}}
\newcommand{\cOr}{\mathcal{OR}}
\newcommand{\cTB}{\mathcal{TB}}
\newcommand{\dpVC}{\textsc{Degree-3 Vertex Cover}}
\newcommand{\dpLD}{\textsc{$L_\cOr$-Distance}}
\newcommand{\leo}[1]{\textcolor{black!50!black}{#1}}
\newcommand{\markj}[1]{\textcolor{black!50!black}{#1}}
\DeclareMathOperator{\head}{head}
\DeclareMathOperator{\tail}{tail}
\DeclareMathOperator{\Gad}{Gad}
\definecolor{lightblue}{RGB}{180,180,255}
\definecolor{lightgreen}{RGB}{180,255,150}
\title{Making a Network Orchard by Adding Leaves}
\author{Leo {van Iersel}}{Delft Institute of Applied Mathematics, Delft University of Technology, The Netherlands}{L.J.J.vanIersel@tudelft.nl}{https://orcid.org/0000-0001-7142-4706}{Research funded in part by Netherlands Organization for Scientific Research (NWO) grants OCENW.KLEIN.125 and OCENW.GROOT.2019.015.}%TODO mandatory, please use full name; only 1 author per \author macro; first two parameters are mandatory, other parameters can be empty. Please provide at least the name of the affiliation and the country. The full address is optional. Use additional curly braces to indicate the correct name splitting when the last name consists of multiple name parts.
\author{Mark Jones}{Delft Institute of Applied Mathematics, Delft University of Technology, The Netherlands}{M.E.L.Jones@tudelft.nl}{[orcid]}{Research funded by Netherlands Organization for Scientific Research (NWO) grant OCENW.KLEIN.125.}
\author{Esther Julien}{Delft Institute of Applied Mathematics, Delft University of Technology, The Netherlands}{E.A.T.Julien@tudelft.nl}{https://orcid.org/0000-0002-7337-1086}{Research funded by Netherlands Organization for Scientific Research (NWO) grant OCENW.GROOT.2019.015.}
\author{Yukihiro Murakami \footnote{Corresponding author}}{Delft Institute of Applied Mathematics, Delft University of Technology, The Netherlands}{Y.Murakami@tudelft.nl}{https://orcid.org/0000-0003-1355-5884}{}
\authorrunning{L. van Iersel, M. Jones, E. Julien, and Y. Murakami} %TODO mandatory. First: Use abbreviated first/middle names. Second (only in severe cases): Use first author plus 'et al.'
\keywords{Phylogenetics, Network, Orchard Networks, Proximity Measures, NP-hardness} %TODO mandatory; please add comma-separated list of keywords
\begin{document}

\maketitle

%TODO mandatory: add short abstract of the document
\begin{abstract}
% Phylogenetic networks are used to represent the evolutionary history of species.
% The class of orchard networks were recently given as one which is based on a tree that need horizontal arcs added. 
% This makes orchards suitable in representing histories riddled with horizontal gene transfers.
% In this paper we define two measures that measure how much a network deviates from being orchard.
% The first measure is based on the minimum number of additional leaves required to make the network orchard. The second measure is based on vertex labellings, which represent time stamps. We show equivalence of the two measures.
% We use the leaf addition measure to show that it is NP-hard to compute; we use the vertex labelling measure to give a mixed integer linear programming formulation.
% Experimental results on real world data as well as synthetic data show promise of the implementation.
% This paper presents two measures that quantify how much a given network deviates from being orchard. The first measure counts the minimum number of additional leaves needed to make the network orchard, while the second measure computes some optimality condition involving vertex labellings. 
% We show that the two measures are equivalent, but that they both have their utility.
Phylogenetic networks are used to represent the evolutionary history of species. Recently, the new class of orchard networks was introduced, which were later \leo{shown to be interpretable} as trees with additional horizontal arcs. This makes the network class ideal for capturing evolutionary histories that involve horizontal gene transfers. \leo{Here, we} study the minimum number of additional leaves needed to make \leo{a} network orchard. We demonstrate that computing this proximity measure for a given network is NP-hard and describe a tight upper bound. We also give an equivalent measure based on vertex labellings to construct a mixed integer linear programming formulation. Our experimental results, which include both real-world and synthetic data, illustrate the effectiveness of our implementation.
\end{abstract}

\section{Introduction}
\label{sec:intro}
Phylogenetic trees are used to represent the evolutionary history of species. 
While they are effective for illustrating speciation events through vertical descent, they are insufficient in representing more intricate evolutionary processes.
% For speciation events (vertical descent), the tree functions well
% for more complex evolutionary processes, we require a more general structure.
Reticulate (net-like) events such as hybridization and horizontal gene transfer (HGT) can give rise to signals that cannot be represented on a single tree~\cite{goulet2017hybridization,wickell2020evolutionary}.
In light of this, phylogenetic networks have gained increasing attention due to their capability in elucidating reticulate evolutionary processes. 

Phylogenetic networks are often categorized into different classes based on their topological features. 
These are often motivated computationally, but some classes are also defined based on their biological relevance~\cite{pardi2015reconstructible}.
Classical examples of network classes involve the \emph{tree-child networks}~\cite{cardona2008comparison} and the \emph{tree-based networks}~\cite{francis2015phylogenetic}.
Roughly speaking, tree-child networks are those where every vertex has passed on a gene via vertical descent to an extant species, and tree-based networks are those obtainable from a tree by adding so-called \emph{linking arcs} between tree arcs.
Recent developments have culminated in the introduction of \emph{orchard networks}, which lie 
 -- inclusion-wise -- between the two aforementioned network classes~\cite{janssen2021cherry,erdHos2019class}.
The class has shown to be both algorithmically attractive and biologically relevant; they are defined as networks that can be reduced to a single leaf by a series of so-called \emph{cherry-picking operations}, and they were shown to be networks that can be obtained by adding horizontal arcs to trees (where the tree is drawn with the root at the top and arcs pointing downwards)~\cite{van2022orchard}. Such horizontal arcs can be used to model HGT events, making orchard networks \leo{especially} apt in representing evolutionary scenarios where every reticulate event is \leo{a horizontal transfer}.
%\todo[inline]{I would emphasize this a bit more and also explain the relevance to LGT.}
% Orchard networks therefore are suitable in evolutionary scenarios in which every reticulate event is caused by some horizontal event.
Orchard networks have also been characterized statically based on so-called \emph{cherry covers}~\cite{van2021unifying}.

When considering a non-orchard network, a natural question arises: how many additional leaves are required to transform the network into one that is orchard? 
From a biological standpoint, this question can be interpreted as asking how many extinct species or unsampled taxa need to be introduced into the network to yield a scenario where every reticulation represents an HGT event. 
% An example of a non-orchard network may arise due to unsampling, where the expected evolutionary outcome was an orchard network.
Given that HGT is the primary driver of reticulate evolution in bacteria~\cite{gyles2014horizontally}, this is an essential inquiry. 
We provide a network of a few fungi species in \Cref{fig:fungi_added_leaf}, which 
requires one additional leaf to make it orchard. 
Formally speaking, the problem of computing this leaf addition measure is as follows.
% In the area of stemmatology where the evolutionary history of texts are concerned, horizontal events correspond to texts which were copied 
\begin{figure}
    \centering
    \includegraphics{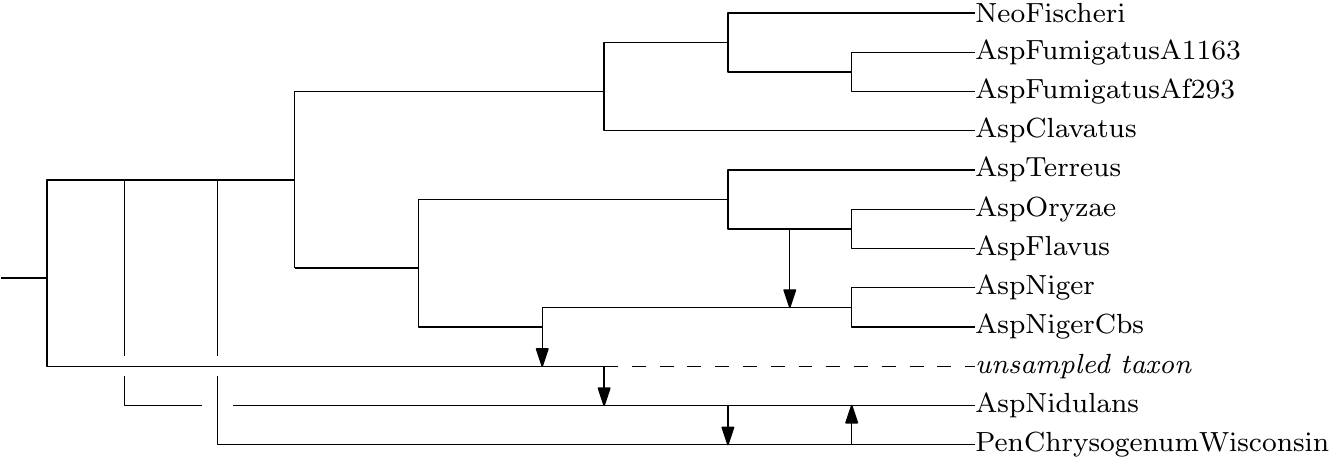}
    \caption{A network on 11 different \leo{taxa} (excluding \emph{unsampled taxon}) of fungi \leo{including 5 reticulations, which is part of a larger network} from~\cite{szollHosi2015genome}. The directed arcs in the figure are linking arcs, \leo{which represent gene transfer highways}.
    % , $v$ is a tree node, and $r_1$ and $r_2$ are reticulation nodes. 
    \leo{In order to make all linking arcs horizontal}, we require an additional leaf (\textit{unsampled taxon}) to represent the evolutionary history. 
    }
    \label{fig:fungi_added_leaf}
\end{figure}
% \todo[inline]{undersampled should be unsampled? Yes}

\medskip
\noindent
\fbox{
\parbox{0.8\linewidth}{
{\sc \dpLD{} (Decision)}\\
{\bf Input:} A network $N$ on a set of taxa $X$ and a natural number $k$.\\
{\bf Decide:} Can $N$ be made orchard with at most $k$ leaf additions?}
}
\medskip

In related research, the leaf addition measure has been investigated for other network classes. 
It has been shown that tracking down the minimum leaf additions to make a network tree-based can be done in polynomial time~\cite{francis2018new}. In the same paper, it was shown that the leaf addition measure was equivalent to two other proximity measures, namely those based on spanning trees and disjoint path partitions.
The same question was posed for the unrooted variant (where the arcs of the network are undirected), for which the problem turned out to be NP-complete~\cite{fischer2020tree}. A total of eight proximity measures were introduced in this latter paper, including those based on edge additions and rearrangement moves.
Instead of considering leaf additions, some manuscripts have even considered leaf deletions (in general, vertex deletions) as proximity measures for the class of so-called \emph{edge-based networks}~\cite{fischer2022far}.
Finally for orchard networks, a recent bachelor's thesis compared how the leaf addition proximity measure differs in general to another proximity measure based on arc deletions~\cite{susanna2022making}. 

In this paper, we show that the leaf addition proximity measure can be computed in polynomial time for the class of tree-child networks, and we give a more efficient algorithm for computing the measure for tree-based networks.
we show that \dpLD{} is NP-complete by a polynomial time reduction from \dpVC{}. 
To model the problem as a \emph{mixed integer linear program} (MILP), we consider a reformulation of the leaf addition measure in terms of 
% second orchard proximity measure based on 
vertex labellings.
Orchard networks are known to be trees with horizontal arcs; roughly speaking, this means we can label the vertices of an orchard network so that every vertex of indegree-$2$ has exactly one incoming arc whose end-vertices have the same labels. 
The reformulated measure, called the
% The second measure 
\emph{vertical arcs} proximity measure, counts -- over all possible vertex labellings (defined formally in \Cref{subsec:ProximityMeasures_H}) -- the minimum number of indegree-$2$ vertices with only non-horizontal arcs. 
% We show that this measure is equivalent to the first leaf addition measure (\Cref{lem:L=H}), and give a MILP formulation. 
Our experimental results are promising, as the real world cases are solved in a fraction of a second. Furthermore, the model also behaves well for synthetic data.

% Consider the example given in \Cref{fig:fungi_added_leaf} with a (sub)network on different species of fungi, based on the work of \cite{szollHosi2015genome}. In order to make the original network orchard, a new leaf needs to be added. As will be explained later, a leaf is added to either incoming arcs of a reticulation if and only if all incoming arcs of this reticulation are vertical (based on labellings). In this example, one can see that a leaf needs to be added above $h_2$ by considering the labels of the internal nodes $v$, $r_1$, and $r_2$. Since $r_1$ is a reticulation, and has reticulation $r_2$ as its only child node, the labels of $r_1$ and $r_2$ must differ. Moreover, the labels of $v$ and $r_1$ must also differ since otherwise the label of $v$ would be greater than that of $r_1$, which is not possible. So both incoming arcs of $r_2$ are vertical. \todoEsther{Shortening the example explanation would be better I think}

The structure of the paper is as follows.
In \Cref{sec:preliminaries}, we provide all necessary definitions and characterizations of orchard networks and tree-based networks. In \Cref{sec:Proximity}, we formally introduce the leaf addition measure for the classes of tree-child, orchard, and tree-based networks. 
% The vertical arcs measure will be introduced only for orchard networks.
% Given a network~$N$, the first measure~$L_{\cOr}(N)$ is the minimum number of leaf additions necessary to make~$N$ orchard. 
In \Cref{sec:Hardness} we show that \dpLD{} is NP-complete (\Cref{thm:L_Or=Hard}).
In \Cref{sec:Bound}, we give a sharp upper bound for the leaf addition proximity measure.
In \Cref{sec:ILP} we give a reformulation of the leaf addition measure to describe the MILP to solve~\dpLD{}, and in \Cref{sec:experiments}, experimental results are shown for the MILP, applied to real and simulated networks. In \Cref{sec:discussion}, we give a brief discussion of our results and discuss potential future research directions. We include proofs for select results in \Cref{sec:appendix}.

\section{Preliminaries}
\label{sec:preliminaries}

A \emph{binary directed phylogenetic network} on a non-empty set~$X$ is a 
directed acyclic graph with
\begin{itemize}
    \item a single \emph{root} of indegree-0 and outdegree-1;
    \item \emph{tree vertices} of indegree-1 and outdegree-2;
    \item \emph{reticulations} of indegree-2 and outdegree-1;
    \item \emph{leaves} of indegree-1 and outdegree-0, that are labelled 
    bijectively by elements of~$X$.
\end{itemize}

For the sake of brevity, we shall refer to binary directed phylogenetic 
networks simply as \emph{networks}. Throughout the paper, assume that~$N$ is a 
network on some non-empty set~$X$ where~$|X| = n$, unless stated otherwise.
Networks without reticulations are called 
\emph{trees}. Tree vertices and reticulations may sometimes collectively be 
referred to as \emph{internal vertices}.

The arc~$uv$ of a network is a \emph{root arc} if~$u$ is the root of the 
network.
An arc~$uv$ of a network is a \emph{reticulation arc} if~$v$ is a reticulation, 
and a \emph{tree arc} otherwise. We say that a vertex~$u$ is a \emph{parent} of 
another vertex~$v$ if~$uv$ is an arc of the network; in such instances we 
call~$v$ a \emph{child} of~$u$. Also, we say that~$u$ and~$v$ are the 
\emph{tail} and the \emph{head} of the arc~$uv$, respectively. In other words, 
we may rewrite arcs as~$uv = \tail(uv)\head(uv)$. The \emph{neighbours} of~$v$ 
refer to the set of vertices that are parents or children of~$v$.
We also say that vertices~$u$ and~$v$ are \emph{siblings} if they share the same parent.

In what follows, we shall define graph operations based on vertex and arc 
deletions. To make sure resulting graphs remain networks, we follow-up every 
graph operation with a \emph{cleaning up} process. Formally, we \emph{clean up} 
a network by applying the following until none is applicable.
\begin{itemize}
    \item Suppress an indegree-1 outdegree-1 vertex (e.g., if $uv$ and $vw$ are 
    arcs where~$v$ is an indegree-1 outdegree-1 vertex, we suppress~$v$ by 
    deleting the vertex~$v$ and adding an arc~$uw$).
    \item Replace parallel arcs by a single arc (e.g., if~$uv$ is an arc twice 
    in a network, 
    delete one of the arcs $uv$).
\end{itemize}
We observe that deleting a non-reticulation arc and cleaning up results in a 
graph containing two indegree-0 vertices. On the other hand, deleting a 
reticulation arc and cleaning up results in a network. Therefore, we shall 
use arc deletions to mean reticulation arc deletions.

\subsection{Tree-Child Networks}

A network is \emph{tree-child} if every non-leaf vertex has a child that is a 
tree vertex or a leaf. 
% With this definition, one can immediately characterize 
% tree-child networks via forbidden graphs. 
% One can characterize tree-child networks via so-called \emph{omnians}. 
We call an internal 
vertex of a network an \emph{omnian} if all of its children are 
reticulations~\cite{jetten2016nonbinary}. It follows from definition that a 
network is tree-child if and only if it contains no omnians.

\subsection{Orchard Networks} \label{sec:orchard}

% We introduce three prominent classes of phylogenetic networks in increasing 
% order of generality.

% \subsubsection{Tree-Child Networks}

% A network is \emph{tree-child} if every non-leaf vertex has a child that is a 
% tree vertex or a leaf. With this definition, one can immediately characterize 
% tree-child networks via forbidden graphs (see \Cref{fig:todo}). One can also 
% characterize tree-child networks via so-called \emph{omnians}. An internal 
% vertex of a network is an \emph{omnian} if all of its children are 
% reticulations~\cite{jetten2016nonbinary}. It follows from definition that a 
% network is tree-child if and only if it contains no omnians.

% \subsubsection{Orchard Networks}

To define orchard networks, we must first define cherries and reticulated 
cherries, as well as operations to reduce them. 
See \Cref{fig:EGSequence} for the illustration of the following definitions.
Let~$N$ be a network. 
Two leaves~$x$ and~$y$ of~$N$ form a \emph{cherry} if they are siblings. In such a case, we say that~$N$ \emph{contains} a 
cherry~$(x,y)$ or a cherry~$(y,x)$. Two leaves~$x$ and~$y$ of~$N$ form a 
\emph{reticulated cherry} if the parent~$p_x$ of~$x$ is a reticulation and the 
parent of~$y$ is also a parent of~$p_x$. In such a case, we say that~$N$ 
\emph{contains} a reticulated cherry~$(x,y)$. \emph{Reducing the cherry~$(x,y)$ from~$N$} is 
the process of deleting the leaf~$x$ and cleaning up. \emph{Reducing the 
reticulated cherry~$(x,y)$ from~$N$} is the process of deleting the arc from 
the parent of~$y$ to the parent of~$x$ and cleaning up. In both cases, we use~$N(x,y)$ to denote the resulting network.

A network~$N$ is \emph{orchard} if there is a sequence~$S = (x_1,y_1)(x_2,y_2)\ldots(x_k,y_k)$
such that~$NS$ is a network 
on a single leaf~$y_k$. It has been shown that the order in which (reticulated) cherries are reduced does not matter~\cite{erdHos2019class,janssen2021cherry}.
Apart from this recursive definition, orchard networks have been characterized based on cherry covers (arc decompositions)~\cite{van2021unifying} and vertex 
labellings~\cite{van2022orchard}. We include both characterizations here.

\begin{figure}
    \centering
    \includegraphics{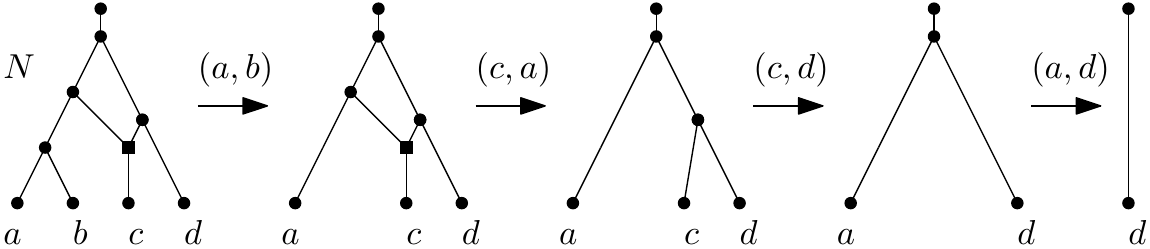}
    \caption{An example of an orchard network~$N$ that is reduced by a sequence~$(a,b)(c,a)(c,d)(a,d)$.
    The network~$N$ contains a cherry~$(a,b)$ and a reticulated cherry~$(c,d)$.
    Subsequent networks are those obtained by a single cherry picking reduction from the previous network. For example, the second network~$N(a,b)$ is obtained from~$N$ by removing the leaf~$b$ and cleaning up.}
    \label{fig:EGSequence}
\end{figure}

\subparagraph{Cherry covers (see \cite{van2021unifying} for more details):} 
A \emph{cherry shape} is a subgraph on three distinct vertices~$x, y, p$ with 
arcs~$px$ and~$py$. 
The \emph{internal vertex} of a cherry shape is $p$, and 
the \emph{endpoints} are~$x$ and~$y$. A \emph{reticulated cherry shape} is a 
subgraph on four distinct vertices~$x, y, p_x, p_y$ with arcs~$p_xx, p_yp_x, 
p_yy$, such that~$p_x$ is a  reticulation in the network. The internal vertices 
of a reticulated cherry shape are~$p_x$ and~$p_y$, and the endpoints are~$x$ 
and~$y$. 
% The \emph{internal reticulation} and 
The \emph{middle arc} of a 
reticulated cherry shape is
% are~$p_x$ and~
$p_yp_x$. 
% The arc~$p_yy$ 
% is called the \emph{free arc} of the reticulated cherry shape. 
We will often 
refer to cherry shapes and the reticulated cherry shapes by their arcs (e.g., 
we would denote the above cherry shape~$\{p_xx, p_yy\}$ and the reticulated 
cherry shape~$\{p_xx, p_yp_x, p_yy\}$). We say that an arc~$uv$ is covered by 
a cherry or reticulated cherry shape~$C$ if~$uv \in C$. 
A \emph{cherry cover} of a binary network is a set $P$ of cherry shapes and reticulated cherry shapes, such that each arc except for the root arc is covered exactly once by~$P$.
In general, a network can have more than one cherry cover.

We define the \emph{cherry cover auxiliary graph}~$G=(V,A)$ of a cherry cover as 
follows. 
%Now, let~$G=(V, A)$ be the \emph{cherry cover auxiliary graph} of a cherry cover. 
For all shapes~$B \in P$, we have~$v_B \in V$. A shape~$B \in P$ is 
\emph{directly above} another shape~$C \in P$ if an internal vertex of~$C$ is 
an endpoint of~$B$. Then,~$v_Bv_C \in A$. (adapted from \cite[Definition 
2.13]{van2021unifying}). 
We say that a cherry cover is \emph{cyclic} if its auxiliary graph has a cycle. We call it \emph{acyclic} otherwise.
See \cref{fig:cherry_cover_example} for an 
illustration of a cyclic and acyclic cherry cover. 

\begin{figure}[h]
    \centering
        \subfloat[Network $N_1$]{\includegraphics[height=4cm]{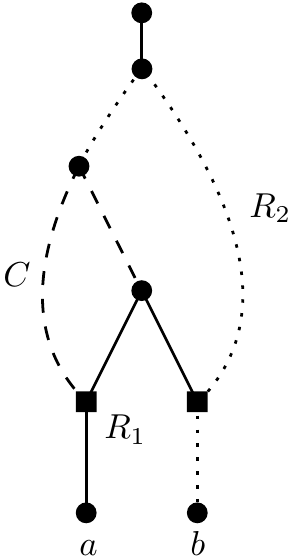} \label{subfig:cc_a}}
        \hspace{2mm}
        \subfloat[Cherry cover aux. graph of $N_1$]{\includegraphics[height=1.2cm]{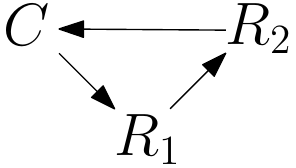} \label{subfig:cc_b}}
        \hspace{5mm}
        \subfloat[Network $N_2$]{\includegraphics[height=4cm]{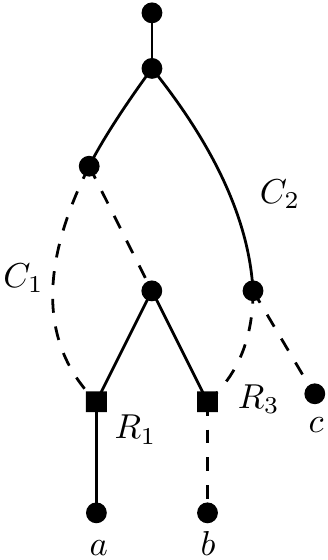} \label{subfig:cc_c}}
        \hspace{2mm}
        \subfloat[Cherry cover aux. graph of $N_2$]{\includegraphics[height=1.2cm]{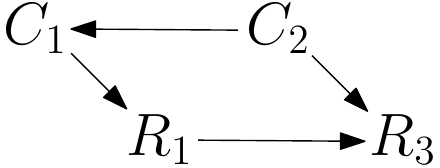} \label{subfig:cc_d}}
    \caption{A cherry cover example. 
    (a) A network~$N_1$ on~$\{a, b\}$ with a cherry cover~$\{C,R_1,R_2\}$.
    (b) The (cyclic) auxiliary graph of~$N_1$ based on the cherry cover of (a). 
    (c) The network~$N_2$ obtained from~$N_1$ by adding a leaf $c$, with a cherry cover~$\{C_1,C_2,R_1,R_3\}$
    (d) The (acyclic) auxiliary graph of~$N_2$ based on the cherry cover of (d).}
    \label{fig:cherry_cover_example}
\end{figure}
% \todo{Esther: can you please label the left network~$N_1$ and the right network~$N_2$?}

\begin{theorem}[Theorem 4.3 of 
	\cite{van2021unifying}]\label{thm:AcyclicCherryCover}
	A network~$N$ is orchard if and only it has an acyclic cherry cover.
\end{theorem}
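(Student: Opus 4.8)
The plan is to prove both implications by induction, exploiting the tight correspondence between a single cherry-picking reduction and a single local edit of a cherry cover together with its auxiliary graph~$G$. The pivotal observation I would establish first is that, in any cherry cover, every internal vertex of~$N$ is the internal vertex of exactly one shape: a tree vertex~$v$ is the apex of the unique shape covering its two out-arcs (either a cherry shape~$\{vx,vy\}$, or the tree-vertex role~$p_y$ of a reticulated cherry shape), while a reticulation~$r$ cannot be the apex of a cherry shape and so must be the reticulation~$p_x$ of the unique reticulated cherry shape covering its out-arc. Consequently, an endpoint of a shape~$B$ is an internal vertex of~$N$ precisely when it is the apex of another shape lying directly below~$B$, so a sink of~$G$ (which exists whenever~$G$ is a nonempty acyclic graph) is exactly a shape whose two endpoints are both leaves. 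Such a sink is therefore either a cherry~$\{px,py\}$ with~$x,y$ leaves, or a reticulated cherry~$\{p_xx,\,p_yp_x,\,p_yy\}$ with~$x,y$ leaves, that is, a reducible cherry or reticulated cherry of~$N$.

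For the direction that an acyclic cherry cover forces~$N$ to be orchard, I would induct on the number of arcs, with the single leaf (whose empty cover is trivially acyclic) as the base case. Given an acyclic cover, I pick a sink~$v_B$ of~$G$; by the observation above it yields a reducible (reticulated) cherry~$(x,y)$, which I reduce to obtain~$N(x,y)$. The key step is to check that deleting~$B$ from the cover -- after relabelling the endpoint of the neighbouring shape that was attached to the vertex suppressed during clean-up -- yields a cherry cover of~$N(x,y)$ in which every non-root arc is again covered exactly once. Since deleting a sink from an acyclic graph leaves an acyclic graph, the new cover is again acyclic, so the inductive hypothesis applies and~$N(x,y)$ is orchard; hence so is~$N$.

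For the converse, I would start from a cherry-picking sequence~$S=(x_1,y_1)\cdots(x_k,y_k)$ reducing~$N$ to a single leaf and build the cover by reversing~$S$. Beginning from the empty (hence acyclic) cover of the single leaf, each reverse step re-introduces the structure removed by one reduction: re-adding the leaf~$x_i$ on a subdivided arc creates a new cherry shape, and re-adding a middle arc creates a new reticulated cherry shape. Because~$x_i$ and~$y_i$ are leaves in the network immediately before that reduction, the newly created shape has both endpoints as leaves, so it is a sink of the updated auxiliary graph; adding a sink vertex to an acyclic graph cannot create a cycle, and (again after relabelling the affected endpoint of the neighbouring shape) the result is a valid cherry cover. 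Iterating back to~$N_0=N$ produces an acyclic cherry cover of~$N$.

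I expect the main obstacle to be the routine-looking but delicate bookkeeping that each local edit preserves the defining property of a cherry cover -- that every non-root arc is covered exactly once -- across the clean-up operations. Suppressing the indegree-1 outdegree-1 vertices created by a reduction merges two arcs into one, and one must verify that the shape previously having the suppressed vertex as an endpoint is exactly the shape whose endpoint should be relabelled, with no arc left uncovered or covered twice; the reticulated-cherry case is the more intricate, since it touches two internal vertices and three arcs at once. Pinning down the apex observation rigorously -- in particular that a reticulation, having outdegree~1, can never be the apex of a cherry shape, and that the root is never an endpoint of any shape -- is the other point needing care, as the entire sink argument rests on it.
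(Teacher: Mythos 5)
This statement is imported by the paper as Theorem~4.3 of~\cite{van2021unifying} and stated without proof, so there is no in-paper argument to compare against; judged against the proof in the cited source, your proposal is correct and follows essentially the same route. Your pivotal observation --- that in any cherry cover every internal vertex of~$N$ is the internal vertex of exactly one shape, so that sinks of the auxiliary graph are precisely the shapes whose two endpoints are leaves, i.e.\ reducible cherries or reticulated cherries --- is exactly the structural fact that drives the standard argument, and your two inductions (peel off a sink shape to reduce, and rebuild the cover by reversing a cherry-picking sequence, noting that each new shape enters as a sink) carry through, including the endpoint-relabelling bookkeeping at suppressed vertices that you correctly flag as the delicate step.
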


\subparagraph{Non-Temporal Labellings:}\label{par:nontemporallabelling}

Let~$N$ be a network with vertex set~$V(N)$. A \emph{non-temporal 
labelling}\footnote{This is named in contrast to \emph{temporal 
representations} of~\cite{baroni2006hybrids}. There, it was required for the 
endpoints of every reticulation arc to have the same label.} 
of~$N$ is a labelling~$t:V(N)\rightarrow\R$ such that
\begin{itemize}
	\item for all arcs $uv$, $t(u)\le t(v)$ and equality is allowed only if~$v$ 
	is a reticulation;
	\item for each internal vertex~$u$, there is a child~$v$ of~$u$ such 
	that~$t(u) < t(v)$;
	\item for each reticulation~$r$ with parents~$u$ and~$v$, at most one 
	of~$t(u)=t(r)$ or~$t(v)=t(r)$ holds.
\end{itemize}
Observe that every network (orchard or not) admits a non-temporal labelling by 
labelling each vertex by its longest distance from the root (assuming each arc is of weight 1). 

Under non-temporal labellings, we call an arc \emph{horizontal} if its 
endpoints have the same label; we call an arc \emph{vertical} otherwise. By 
definition, only reticulation arcs can be horizontal. We say that a 
non-temporal labelling is an \emph{HGT-consistent labelling} if every 
reticulation is incident to exactly one incoming horizontal arc. We recall the 
following key result.

\begin{theorem}[Theorem 1 of~\cite{van2022orchard}]\label{thm:OrchIFFHori}
	A network is orchard if and only if it admits an HGT-consistent labelling.
\end{theorem}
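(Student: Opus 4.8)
The plan is to use the acyclic cherry cover characterization (\Cref{thm:AcyclicCherryCover}) as a bridge in both directions, rather than arguing directly from the recursive cherry-picking definition. The guiding principle is a correspondence between the two characterizations: an incoming \emph{horizontal} arc of a reticulation should correspond exactly to the \emph{middle arc} of a reticulated cherry shape, and the \emph{strict} inequalities demanded by a non-temporal labelling should encode precisely the \emph{acyclicity} of the cherry cover auxiliary graph. Establishing this dictionary reduces the biconditional to translating an acyclic cherry cover into an HGT-consistent labelling and back.

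For the direction ``HGT-consistent $\Rightarrow$ orchard'', let $t$ be an HGT-consistent labelling. For each reticulation $r$ let $q_r r$ be its unique incoming horizontal arc. I would first observe that $q_r$ must be a tree vertex: if $q_r$ were a reticulation, its only child would be $r$, and $t(q_r)=t(r)$ would violate the requirement that every internal vertex has a child with strictly larger label. Hence each $q_r$ has a second child, so the reticulated cherry shape $R_r=\{r\,c(r),\,q_r r,\,q_r y_r\}$ is well defined; assigning each remaining tree vertex (no two reticulations can share such a tail, again by the strict-child condition) a cherry shape then yields a cherry cover $P$, which a short counting check confirms covers every non-root arc exactly once. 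The crux is proving $P$ is acyclic. I would assign each shape $B$ the common label $\ell(B)$ of its internal vertices (well defined since the middle arc of a reticulated cherry shape is horizontal) and show that whenever $B$ is directly above $C$ via an arc $vw$ (with $v$ internal in $B$, $w=$ the witnessing internal vertex of $C$), one has $\ell(B)=t(v)\le t(w)=\ell(C)$. A directed cycle in the auxiliary graph would force equality throughout, making every witnessing arc $vw$ horizontal into a reticulation $w$; but then $vw$ is the unique horizontal arc into $w$, hence the middle arc of the single shape containing $w$ as an internal vertex, namely $C$, while $vw\in B$ as well. Since each arc is covered once this gives $B=C$, contradicting that auxiliary arcs join distinct shapes. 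This acyclicity argument is the main obstacle. Applying \Cref{thm:AcyclicCherryCover} then finishes the direction.

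For ``orchard $\Rightarrow$ HGT-consistent'', I would take an acyclic cherry cover $P$ (by \Cref{thm:AcyclicCherryCover}) and topologically label its auxiliary graph, choosing $\lambda(B)<\lambda(C)$ whenever there is a directed path from $B$ to $C$. Define $t(v)=\lambda(B)$ when $v$ is an internal vertex of the shape $B$, set every leaf to a value exceeding all $\lambda$-values, and the root below all of them. Verifying that $t$ is an HGT-consistent labelling is then routine: the middle arc of each $R_r$ joins two internal vertices of the same shape and so becomes horizontal, giving each reticulation exactly one incoming horizontal arc; every other arc $uv$ passes from an internal vertex of a shape $B$ to an endpoint $v$ lying in a strictly later shape, so $\lambda$ strictly increases and the required strict inequalities (for tree-vertex heads, for leaves, and for the non-middle reticulation arc) all hold, while each internal vertex retains a strictly larger child. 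I expect this direction to be the less delicate of the two, with the only care needed in checking the boundary cases for leaves and the root; the conceptual weight of the proof sits in the acyclicity equivalence handled above.
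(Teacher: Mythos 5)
The paper never proves this statement: it is imported verbatim as Theorem~1 of \cite{van2022orchard}, so there is no in-paper argument to compare against, and your proposal has to be judged as an alternative to the original proof. It is a genuinely different route, and I believe it is correct. The original proof in \cite{van2022orchard} works by induction along cherry-picking sequences; you instead derive the labelling characterization from the other static characterization that this paper imports, the acyclic cherry cover criterion (\Cref{thm:AcyclicCherryCover}, from \cite{van2021unifying}). Since that result predates and is independent of \cite{van2022orchard}, there is no circularity. Your dictionary -- horizontal incoming arc of a reticulation $\leftrightarrow$ middle arc of a reticulated cherry shape, strictness of the labelling $\leftrightarrow$ acyclicity of the auxiliary graph -- is exactly the right one, and both translations go through: in one direction the shape label $\ell$ is non-decreasing along auxiliary arcs, a cycle forces equality, and the offending arc then lies in two shapes, contradicting exact coverage; in the other, a topological order of the auxiliary DAG yields the labelling. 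What each approach buys: the sequence induction of \cite{van2022orchard} is self-contained and needs no cover machinery, while your route is shorter given what this paper already cites and makes the correspondence between the two static characterizations explicit, which fits well with a paper whose hardness proof manipulates cherry covers.

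Two details deserve explicit treatment rather than the ``routine check'' you wave at. First, in the direction from an HGT-consistent labelling to a cover, the shape $R_r=\{rc(r),\,q_rr,\,q_ry_r\}$ is only a legitimate reticulated cherry shape if its four vertices are distinct; this fails exactly when $c(r)=y_r$, i.e.\ the triangle $q_r\to r\to y_r$ together with $q_r\to y_r$. Rule it out: in that configuration $y_r$ is a reticulation whose two incoming arcs have tails $q_r$ and $r$ with $t(q_r)=t(r)$, so its incoming arcs are either both horizontal or both vertical, contradicting HGT-consistency. Second, in the converse direction, your $t(v)=\lambda(B)$ is well defined only because every internal vertex of the network is the internal vertex of \emph{exactly one} shape of the cover; this follows since the tail of every arc of a shape is internal to that shape, a shape containing one outgoing arc of a tree vertex necessarily contains both, and a reticulation's single outgoing arc is covered once. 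Both points are one-line fixes, so they are gaps of exposition rather than of substance, but the first one is genuinely easy to miss.
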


\subsection{Tree-Based Networks}

A network~$N$ is \emph{tree-based} with \emph{base tree}~$T$ if it can be 
obtained from~$T$ in the following steps.
\begin{enumerate}
	\item Replace some arcs of~$T$ by paths, whose internal vertices we call 
	\emph{attachment points}; each attachment point is of indegree-1 and 
	outdegree-1.
	\item Place arcs between attachment points, called \emph{linking arcs}, so 
	that the graph contains no vertices of total degree greater than $3$, and 
	so that it remains acyclic.
	\item Clean up.
\end{enumerate}

The relation between the classes of tree-child, orchard, and tree-based 
networks can be stated as follows.

\begin{lemma}[\cite{janssen2021cherry} and Corollary 1 of 
\cite{van2022orchard}]\label{lem:NetworkClassContainment}
	If a network is tree-child, then it is orchard. If a network is orchard, 
	then it is tree-based.
\end{lemma}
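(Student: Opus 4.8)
The statement splits into two independent implications, and I would prove each separately. For \emph{tree-child implies orchard} I would use the recursive cherry-picking definition of orchard networks and argue by induction on the number of vertices of $N$. The base case is a single leaf, which is orchard by definition. For the inductive step the key lemma is: every tree-child network on at least two leaves contains a reducible cherry or reticulated cherry, and reducing it yields a smaller tree-child network. Granting this, I would reduce one such (reticulated) cherry $(x,y)$, obtain a tree-child network $N(x,y)$ that is orchard by induction with reducing sequence $S'$, and conclude that $(x,y)S'$ reduces $N$ to a single leaf, so $N$ is orchard.

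The crux, and the step I expect to be the main obstacle, is this existence lemma, which I would prove by an extremal argument on the longest-distance-from-root function $d$ (well defined since $N$ is a DAG). Pick a leaf $x$ maximizing $d$. If the parent $p$ of $x$ is a tree vertex, then its other child $z$ must itself be a leaf: otherwise $z$ is internal and has a leaf descendant strictly deeper than $x$, contradicting maximality; this yields a cherry $(x,z)$. Hence, assuming $N$ has no cherry, every maximum-$d$ leaf has a reticulation parent. Now take such an $x$ with reticulation parent $p$; the tree-child property forces each parent $q_i$ of $p$ to be a tree vertex (it can be neither the root nor a reticulation, since in either case $p$ would be the unique child of a non-leaf vertex and that child is a reticulation), so $q_i$ has a second child $z_i$ that is a tree vertex or a leaf. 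If some $z_i$ is a leaf we obtain a reticulated cherry $(x,z_i)$; otherwise both $z_i$ are tree vertices. Choosing the parent $q_i$ with $d(p)=d(q_i)+1$, the tree vertex $z_i$ has a descendant leaf $\ell$ reached along a path of tree vertices (guaranteed by tree-child), so $d(\ell)\ge d(z_i)+1\ge d(q_i)+2=d(x)$. By maximality $\ell$ is also a maximum-$d$ leaf, yet its unique parent is a tree vertex, contradicting the conclusion that every maximum-$d$ leaf has a reticulation parent. This forces a cherry or reticulated cherry to exist. Preservation of the tree-child property under one reduction is then a routine local check: each suppression introduced by cleaning up hands a leaf child to the affected parent, so no new omnian is created.

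For \emph{orchard implies tree-based} I would work directly from the HGT-consistent labelling characterization (\Cref{thm:OrchIFFHori}). Fix such a labelling $t$, so every reticulation has exactly one incoming horizontal arc. Delete, for each reticulation, its unique horizontal in-arc, and let $T$ be the resulting subgraph. Since only reticulation arcs are removed, out-degrees are unchanged, so the leaves of $T$ are exactly the leaves of $N$; moreover every non-root vertex retains an in-arc whose tail has strictly smaller label (the in-arc of a tree vertex or leaf is vertical, and for a reticulation we kept its vertical in-arc). Following parent pointers therefore strictly decreases $t$ and must terminate at the root, so $T$ is a spanning tree on leaf set $X$. The deleted horizontal arcs become the linking arcs: their heads are reticulations, now of in-degree one in $T$, and I would show their tails are tree vertices (a reticulation or the root cannot be the tail of a horizontal arc, since then its only child would violate the ``strictly larger child'' requirement of the labelling), so in $T$ each such tail also becomes an indegree-1 outdegree-1 vertex. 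Suppressing all these indegree-1 outdegree-1 vertices yields a genuine base tree $T'$; reinstating them as attachment points and re-adding the horizontal arcs as linking arcs recovers $N$ exactly.

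It remains to verify the structural constraints of the tree-based definition, which I expect to go through cleanly. Acyclicity is inherited from $N$. Each attachment point is incident to exactly one linking arc: HGT-consistency gives each reticulation exactly one horizontal in-arc, the labelling forbids a tree vertex from having two horizontal out-arcs, and no vertex is simultaneously a head (hence a reticulation) and a tail (hence a tree vertex) of horizontal arcs. Consequently no vertex exceeds total degree three. This realizes $N$ as a base tree with attachment points and linking arcs, that is, as a tree-based network, completing \Cref{lem:NetworkClassContainment}.
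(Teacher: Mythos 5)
Your proof is essentially correct, but note that the paper itself does not prove \Cref{lem:NetworkClassContainment} at all: both implications are imported from the literature (\cite{janssen2021cherry} for tree-child implies orchard, and Corollary~1 of \cite{van2022orchard} for orchard implies tree-based), so any self-contained argument is necessarily ``different from the paper.'' Your first half --- induction on the number of vertices, with the extremal deepest-leaf argument showing that every nontrivial tree-child network contains a cherry or reticulated cherry whose reduction preserves the tree-child property --- is sound, and is essentially the standard argument from the cherry-picking literature; the only bookkeeping left implicit is that a single-leaf tree-child network must be the trivial network, which your own extremal argument in fact delivers. Your second half reconstructs, from \Cref{thm:OrchIFFHori}, the proof of the cited Corollary~1: delete the unique horizontal in-arc of each reticulation, observe the remainder is a spanning tree whose indegree-1 outdegree-1 vertices become attachment points, and re-add the horizontal arcs as linking arcs. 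Two remarks. First, your sentence ``out-degrees are unchanged'' is literally false --- the tail of each deleted arc loses an out-arc; the conclusion you need (no vertex becomes a sink) instead follows from facts you establish only later, namely that tails of horizontal arcs are tree vertices, each with at most one horizontal out-arc, so the claim survives but the justification should be reordered. Second, within this paper there is a much shorter route to the second implication: by \Cref{thm:AcyclicCherryCover} an orchard network has an acyclic cherry cover, which is in particular a cherry cover, hence the network is tree-based by \Cref{thm:TB=ChCover}. Your labelling-based route buys an explicit base tree and the horizontal/vertical interpretation of the linking arcs; the cherry-cover route buys brevity and stays entirely inside results already quoted in the paper.
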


We include here a static characterization of tree-based networks based on an 
arc partition, called \emph{maximum zig-zag 
trails}~\cite{hayamizu2021structure, zhang2016tree}. 
Let~$N$ be a network. A \emph{zig-zag trail} of length~$k$ is a sequence~$(a_1,a_2,\ldots, a_k)$ of arcs where~$k\ge 1$, where either~$\tail(a_i) = \tail(a_{i+1})$ 
or~$\head(a_i)=\head(a_{i+1})$ holds for~$i\in [k-1] = \{1,2,\ldots, k-1\}$. We 
call a zig-zag trail~$Z$ \emph{maximal} if there is no zig-zag trail that 
contains~$Z$ as a subsequence. Depending on the nature of~$\tail(a_1)$ 
and~$\tail(a_k)$, we have four possible maximal zig-zag trails. 
\begin{itemize}
	\item \emph{Crowns}: $k\ge 4$ is even and $\tail(a_1) = \tail(a_k)$ or 
	$\head(a_1) 
	= \head(a_k)$.
	\item \emph{M-fences}: $k\ge 2$ is even, it is not a crown, and $\tail(a_i)$ is a tree vertex for every~$i\in[k]$.
	\item \emph{N-fences}: $k\ge 1$ is odd and $\tail(a_1)$ or~$\tail(a_k)$, but not both, is 
	a reticulation. By reordering the arcs, assume henceforth that~$\tail(a_1)$ is a reticulation and~$\tail(a_k)$ a tree vertex.
	\item \emph{W-fences}: $k\ge 2$ is even and both~$\tail(a_1)$ 
	and~$\tail(a_k)$ are 
	reticulations.
\end{itemize}
We call a set~$S$ of maximal zig-zag trails a \emph{zig-zag decomposition} 
of~$N$ if the elements of~$S$ partition all arcs, except for the root arc, 
of~$N$.

\begin{lemma}[Corollary 4.6 of \cite{hayamizu2021structure}]\label{lem:TB=NoW}
    Let~$N$ be a network. Then~$N$ is tree-based if and only if it has no 
    W-fences.
\end{lemma}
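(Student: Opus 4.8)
The plan is to work with a concrete combinatorial reformulation of tree-basedness and then show that the only obstruction to it is a W-fence, exploiting the fact that the maximal zig-zag trails partition the non-root arcs. First I would reformulate the definition of tree-based in terms of arc deletions. Recovering a base tree from~$N$ amounts to deleting, for every reticulation, exactly one of its two incoming arcs (the arc that played the role of a \emph{linking arc}) and then cleaning up. Write~$D$ for such a choice of one incoming arc per reticulation. Since~$N$ is acyclic, $N-D$ is automatically acyclic, and every non-root vertex has indegree exactly~$1$ in~$N-D$; hence~$N-D$ is a subdivision of a tree precisely when it has no \emph{dead internal vertex}, i.e.\ no non-leaf vertex of outdegree~$0$. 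I would first prove the equivalence: $N$ is tree-based if and only if there is a choice~$D$ (one incoming arc per reticulation) such that~$N-D$ has no dead internal vertex. The ``only if'' direction unwinds the three-step construction, and the ``if'' direction checks that suppressing the degree-$2$ vertices of~$N-D$ yields a base tree with leaf set exactly~$X$ and that the deleted arcs reinsert as valid linking arcs (the degree and acyclicity bounds are inherited from~$N$).

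Second I would localize the condition ``$N-D$ has no dead internal vertex'' to the individual zig-zag trails. Two consecutive arcs of a maximal zig-zag trail share either a tail (a tree vertex) or a head (a reticulation), and these two junction types must strictly alternate along the trail, since a tree vertex has only two out-arcs and a reticulation only two in-arcs. Crucially, both in-arcs of any reticulation are adjacent and hence lie in a single trail (forming a \emph{head-junction}), and both out-arcs of any tree vertex likewise lie in a single trail (a \emph{tail-junction}). Therefore the global choice of~$D$ decomposes into independent per-trail choices, and the dead-vertex condition translates into three local rules: at each head-junction exactly one of the two arcs is deleted; at each tail-junction not both arcs are deleted (else the shared tree vertex dies); and the unique out-arc of a reticulation is never deleted (else that reticulation dies). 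The lemma then reduces to: a maximal zig-zag trail admits a labelling of its arcs by~$\{\text{delete},\text{keep}\}$ satisfying these rules if and only if it is not a W-fence.

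Third I would carry out the case analysis over the four trail types. For a W-fence both end-arcs emanate from reticulations, so both terminal junctions are head-junctions, and the rule ``never delete a reticulation's sole out-arc'' forces both end-arcs to be kept. Propagating the exactly-one-deleted and not-both-deleted rules inward then forces a strict alternation keep, delete, keep, delete, \dots\ from each end; because a W-fence has even length, the two alternations meet with conflicting parity, making the terminal arcs simultaneously forced-kept and forced-deleted~-- a contradiction. This parity clash is the heart of the argument. For the remaining types I would exhibit a valid labelling: M-fences have no reticulation tails, so no arc is forced and the alternation along the path is freely satisfiable; N-fences have odd length with exactly one reticulation-tail end, and the single forced end propagates consistently to the free tree-vertex end; crowns are closed trails of even length, where a balanced alternating deletion around the cycle respects all rules. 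Combining these, every trail other than a W-fence is feasible, and since the trails partition the non-root arcs and their constraints are independent, $N$ admits a valid~$D$ if and only if it contains no W-fence, which by the reformulation is equivalent to being tree-based.

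The main obstacle I anticipate is twofold: making the first reformulation fully rigorous~-- in particular verifying that the ``if'' direction really reconstructs a legal base tree (the interplay between cleaning up, the leaf set staying equal to~$X$, and the reinserted linking arcs respecting the degree-$3$ and acyclicity constraints)~-- and, in the third step, cleanly justifying the forced alternations and the exact parity bookkeeping that distinguishes even-length W-fences from odd-length N-fences. The crown case also needs a small separate check that a consistent cyclic deletion exists.
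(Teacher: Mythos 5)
The paper does not prove this statement at all: it imports it verbatim as Corollary~4.6 of Hayamizu's structure theorem paper, so there is no in-paper proof to compare against. Judged on its own merits, your proposal is correct, and it essentially reconstructs Hayamizu's original argument. Your first step (tree-based iff one can delete exactly one incoming arc per reticulation so that no internal vertex loses all its outgoing arcs) is the standard spanning/subdivision-tree reformulation, and it is sound: with one in-arc kept per non-root vertex, acyclicity forces $N-D$ to be a spanning arborescence, so the only failure mode is a ``dead'' internal vertex. Your second step is the crux and is also right: in a binary network the two in-arcs of a reticulation are adjacent at a head-junction of a single maximal trail, the two out-arcs of a tree vertex are adjacent at a tail-junction of a single maximal trail (both by maximality), so the three local rules (exactly one arc deleted per head-junction, not both deleted at a tail-junction, never delete an arc with reticulation tail) partition into independent per-trail constraint systems over the unique zig-zag decomposition. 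The W-fence parity clash is exactly the right obstruction: both end arcs are forced kept, propagation forces keep/delete alternation from both ends, and even length makes the parities collide (already visible at $k=2$, where both in-arcs of the shared reticulation are forced kept, yet one must be deleted). The feasibility checks for M-fences, N-fences and crowns go through as you describe. Two small points to tighten when writing it up: the root arc sits outside the decomposition, but it is harmless since the root's child can never be a reticulation (acyclicity), so it is never a deletion candidate; and in a crown every tail is shared, hence a tree vertex, so no arc there is forced, which is what makes the balanced alternating deletion work.
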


\begin{theorem}[Theorem 4.2 of 
\cite{hayamizu2021structure}]\label{thm:UniqueMaxZigZag}
	Any network~$N$ has a unique zig-zag decomposition.
\end{theorem}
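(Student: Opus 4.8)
The plan is to reduce the statement to a standard fact about graphs of maximum degree two. First I would define an auxiliary graph $H$ whose vertices are the non-root arcs of $N$, and in which two arcs are joined by an edge precisely when they share a tail or share a head. A zig-zag trail is then exactly a walk in $H$ that does not repeat a vertex, and a maximal zig-zag trail corresponds to a connected component of $H$. So the whole theorem amounts to showing that $H$ decomposes canonically into paths and cycles.

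The key step is to bound the degrees in $H$ using the binary structure of~$N$. For a non-root arc $a = uv$, the number of other arcs sharing the tail $u$ equals the outdegree of $u$ minus one, which is $1$ when $u$ is a tree vertex and $0$ when $u$ is a reticulation (a leaf cannot be a tail, and the root arc is excluded from $H$). Likewise, the number of other non-root arcs sharing the head $v$ is at most the indegree of $v$ minus one, which is $1$ when $v$ is a reticulation and $0$ otherwise. Hence every vertex of $H$ has degree at most two. I would also observe that no two distinct arcs share \emph{both} a tail and a head, since that would produce parallel arcs, which are removed during cleaning up; thus $H$ is a simple graph with maximum degree two.

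A graph of maximum degree two is a disjoint union of simple paths and cycles, and its connected components are uniquely determined. Reading each component as a sequence of arcs yields a zig-zag trail, since consecutive arcs share a tail or a head by construction, and such a trail is maximal precisely because the endpoints of a path component have no further neighbour in $H$; cycle components give crowns while path components give the three fence types, according to whether $\tail(a_1)$ and $\tail(a_k)$ are reticulations or tree vertices. Conversely, every maximal zig-zag trail is a full connected component of $H$. Since the components of $H$ partition its vertex set, the non-root arcs are partitioned into maximal zig-zag trails, giving existence; and since the component partition of any graph is canonical, this partition is the only zig-zag decomposition, giving uniqueness.

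The main obstacle I anticipate is boundary bookkeeping rather than any deep difficulty. One must treat the root arc carefully: it is excluded from the decomposition, and because its tail has outdegree one it has degree at most one in the adjacency graph on \emph{all} arcs, so deleting it merely trims one endpoint of a path and cannot split a component. One must also confirm that the trail terminology matches connected components exactly, in particular that a maximal zig-zag trail never revisits an arc, so that it is genuinely a path or cycle in $H$ rather than a more general closed walk. Once these edge cases are dispatched, the correspondence between the four trail types and the path/cycle components of $H$ follows immediately from the tail parities at the two ends.
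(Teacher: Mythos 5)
The paper itself contains no proof of this statement: it is imported verbatim as Theorem 4.2 of \cite{hayamizu2021structure}, so there is no in-paper argument to compare yours against; your proposal can only be judged on its own merits. Judged that way, it is correct, and it is essentially the standard argument behind the cited result. The degree bound is the heart of the matter and you get it right: for a non-root arc $uv$ of a binary network, tail-sharing contributes at most one neighbour in $H$ (only when $u$ is a tree vertex) and head-sharing contributes at most one (only when $v$ is a reticulation), so $H$ has maximum degree two, its components are paths and cycles, maximal zig-zag trails coincide with components, and uniqueness of the decomposition is just uniqueness of the component partition. The two boundary issues you flag are the right ones, and both are easily dispatched: arcs in a trail must be distinct (otherwise no maximal trail exists, since a repeating sequence extends forever, so the definition is only sensible with distinct arcs), and the binary degrees also forbid a vertex of $H$ from having two tail-neighbours or two head-neighbours, which forces cycle components to alternate tail- and head-adjacencies and hence have even length at least four, matching the crown definition (length-two "cycles" would be parallel arcs, which are excluded). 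The root-arc remark is likewise sound, since an arc of degree at most one in the full adjacency graph can only be a pendant endpoint of a path component, so removing it cannot split a component.
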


\begin{theorem}[Theorem 3.3 of \cite{van2021unifying}]\label{thm:TB=ChCover}
    Let~$N$ be a network. Then~$N$ is tree-based if and only if it has a cherry cover.
\end{theorem}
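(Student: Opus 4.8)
The plan is to leverage the zig-zag characterization already available in the excerpt. By \Cref{lem:TB=NoW}, $N$ is tree-based if and only if its zig-zag decomposition (unique by \Cref{thm:UniqueMaxZigZag}) contains no W-fence, so it suffices to prove that $N$ admits a cherry cover if and only if it has no W-fence. The engine is a reformulation of cherry covers as a bipartite matching problem, which rests on two structural facts forced by the definitions. First, since a reticulation has outdegree $1$, its unique out-arc can only be covered as the arc $p_xx$ of a reticulated cherry shape, so there is a bijection between reticulations and reticulated cherry shapes. Dually, any arc whose tail is a tree vertex $v$ is neither a $p_xx$ arc (those have reticulation tails), so it must lie in the shape having $v$ as its internal tree vertex; hence both out-arcs of every tree vertex are always covered together by a single shape. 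Consequently a cherry cover amounts to choosing, for each reticulation $r$, one of its incoming arcs $tr$ with $t$ a tree vertex to serve as the middle arc $p_yp_x$ of $r$'s shape, and these chosen vertices $t=t_r$ must be pairwise distinct, since each is the $p_y$ of at most one shape and that shape consumes both its out-arcs.

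The first substantive step is to show this matching condition is not only necessary but sufficient. Given a choice $r\mapsto t_r$ of distinct tree-vertex parents, I would set the reticulated cherry shape of each $r$ to be $\{rw_r, t_rr, t_ry_r\}$, where $w_r$ is the child of $r$ and $y_r$ is the other child of $t_r$, and cover the two out-arcs of every remaining tree vertex by a cherry shape. A verification organized by the tail of each arc then shows every non-root arc is covered exactly once: out-arcs of reticulations lie only in their own shape, and both out-arcs of a tree vertex are covered either jointly by a cherry shape or, if it is some $t_r$, by the corresponding reticulated cherry shape. Comparing the total $2T+R$ of covered arcs (with $T$ tree vertices and $R$ reticulations) against the number of non-root arcs confirms there is no double covering. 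The converse is immediate from the two structural facts. Thus $N$ has a cherry cover if and only if the bipartite graph with parts the reticulations and the tree vertices, and an edge $rt$ whenever $tr$ is an arc of $N$, has a matching saturating all reticulations.

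To finish I would analyse this matching through the zig-zag decomposition. Because the two in-arcs of a reticulation share its head and the two out-arcs of a tree vertex share its tail, each reticulation together with all of its candidate tree-vertex parents, and each tree vertex, lies within a single maximal zig-zag trail; hence the saturating matching exists if and only if it exists trail by trail. For a crown the induced bipartite graph is an even cycle alternating $\ell$ reticulations with $\ell$ tree vertices and has a perfect matching; an M-fence carries $\ell$ tree vertices but only $\ell-1$ reticulations, and an N-fence carries equally many of each, arranged so that matching each reticulation to the next free tree vertex (starting from the forced endpoint) succeeds. So every non-W trail admits a saturating assignment. A W-fence, however, contains $\ell$ reticulations whose only tree-vertex parents are its $\ell-1$ internal tail-sharing tree vertices, since both of its endpoint arcs have reticulation tails; Hall's condition then fails and no assignment exists. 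Combining, $N$ has a cherry cover if and only if it has no W-fence, which by \Cref{lem:TB=NoW} is exactly tree-basedness.

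I expect the main obstacle to be the structural reformulation of the first paragraph together with the exactness check of the second: one must argue that the seemingly weak requirement of \emph{distinct tree-vertex parents} already forces a globally consistent cover with no arc covered twice, and in particular that the companion arc $t_ry_r$ cannot clash with another reticulation's middle arc (it cannot, precisely because each tree vertex serves as a $p_y$ at most once). The per-trail counting is then routine, the only delicate point being the bookkeeping of which endpoint arc of each fence type has a reticulation tail versus a tree-vertex tail.
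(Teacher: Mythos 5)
Your argument is correct, but it takes a genuinely different route from the source of this statement: the present paper does not prove \Cref{thm:TB=ChCover} at all---it imports it from \cite{van2021unifying}---and the original proof there works directly with the definition of tree-based networks, via the dictionary ``middle arcs of reticulated cherry shapes $=$ linking arcs'': deleting all middle arcs of a cherry cover leaves every reticulation with indegree $1$ and every tree vertex with outdegree at least $1$, hence a spanning tree whose leaves are exactly the leaves of $N$; conversely, in a base-tree embedding each reticulation has exactly one incoming linking arc, and these serve as middle arcs of a cover. Your proof instead reformulates a cherry cover as a system of distinct representatives (each reticulation gets a distinct tree-vertex parent supplying its middle arc) and then settles the matching trail by trail in the zig-zag decomposition of \cite{hayamizu2021structure}, using \Cref{thm:UniqueMaxZigZag} and converting ``no W-fence'' into ``tree-based'' via \Cref{lem:TB=NoW}. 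Your two structural facts are sound (the out-arc of a reticulation can only be a $p_xx$ arc since every other shape arc has a tail of outdegree $2$; any shape covering one out-arc of a tree vertex covers both), and the per-trail bookkeeping is right: crowns give alternating even cycles, N-fences alternating paths with a forced first match, M-fences have a surplus tree vertex, and W-fences fail Hall's condition because their $\ell$ reticulations see only $\ell-1$ tree vertices. What your approach buys is a proof that stays entirely inside the results already quoted in this paper; what the original buys is directness and the conceptually valuable correspondence between cherry covers and base trees, with no zig-zag machinery needed.

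Two small points would make your sufficiency direction airtight. First, the definition requires a reticulated cherry shape to have four \emph{distinct} vertices, so your shape $\{rw_r,\, t_rr,\, t_ry_r\}$ degenerates when $w_r = y_r$, i.e.\ when $t_r \rightarrow r \rightarrow w_r$ and $t_r \rightarrow w_r$ form a triangle. This cannot occur under a saturating assignment---in that configuration $w_r$ is a reticulation whose only tree-vertex parent is $t_r$, so $w_r$ itself needs $t_r$, contradicting distinctness---but the one-line argument should be said. Second, your trail analysis implicitly assumes every reticulation occurs as a shared head inside its trail (never as a free end created by the excluded root arc); this is fine because the root's child can never be a reticulation (its second parent would have to be both above and below it), but it deserves a sentence.
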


\section{Leaf Addition Proximity Measure}
\label{sec:Proximity}
% \todo{Change this to only keep first measure.}
% In this section we shall define two proximity measures. the first proximity measure will be based on adding leaves to a network. The second will be based on an optimality condition over all non-temporal labellings of the network. The second measure will only be defined for orchard networks, in light of \Cref{thm:OrchIFFHori}. In \Cref{lem:L=H}, we show that these measures turn out to be the same for orchard networks. The first measure gives a better biological intuition while the latter measure gives an attractive characterization of a mixed integer linear programming formulation in~\Cref{sec:ILP}.
% \subsection{$L$: Leaf Additions}
% \label{subsec:ProximityMeasures_L}
Let~$N$ be a network on~$X$. \emph{Adding a leaf~$x\notin X$ to an arc~$e$ 
of~$N$} is the process of adding a labelled vertex~$x$, subdividing the arc~$e$ 
by a vertex~$w$ (if~$e=uv$ then we delete the arc~$uv$, add the vertex~$w$, and 
add arcs~$uw$ and~$wv$), and adding an arc~$wx$. We denote the resulting 
network by~$N+(e,x)$. When the arc~$e$ in the above is irrelevant or clear, we simply 
call 
this process \emph{adding a leaf~$x$ to~$N$}, and denote the resulting network 
by~$N+x$.

In this section, $\cC$ 
will be used to denote a network class. In particular, we shall use~$\cTC, 
\cOr$, and~$\cTB$ to denote the classes of tree-child networks, orchard 
networks, and tree-based networks, respectively.
Let~$L_{\cC}(N)$ denote the minimum number of 
leaf additions required to make the network~$N$ a member of~$\cC$. 
We first show that computing~$L_\cTC(N)$ and~$L_\cTB(N)$ can be done in polynomial time. 
% The proofs for Lemmas~\ref{lem:L_TC=Omnian}, \ref{lem:L_TCisPoly}, \ref{lem:L_TB=W-fences}, and \ref{lem:L_TB=polynomial} can be found in \Cref{sec:appendix}.
% \todo{Get rid of other classes if we don't include.}

\begin{lemma}\label{lem:L_TC=Omnian}
    Let~$N$ be a network. Then $L_{\cTC}(N)$ is equal to the number of omnians. Moreover,~$N$ can be made tree-child by adding a leaf to exactly one outgoing arc of each omnian. 
\end{lemma}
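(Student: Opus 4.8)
The plan is to prove both the lower bound $L_{\cTC}(N) \ge (\text{number of omnians})$ and the upper bound $L_{\cTC}(N) \le (\text{number of omnians})$, using the characterization that a network is tree-child if and only if it contains no omnians.

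For the upper bound, I would take the constructive approach suggested by the ``moreover'' clause. Let $u$ be an omnian, so both of its children are reticulations. I would add a leaf to one of the two outgoing arcs of $u$: if $uv$ is an outgoing arc with $v$ a reticulation, then adding a leaf subdivides $uv$ with a new vertex $w$, and $u$ now has $w$ as a child, where $w$ is an indegree-$1$ outdegree-$2$ vertex (a tree vertex). Hence $u$ is no longer an omnian. The key point I need to verify is that this operation does not create any \emph{new} omnians and does not re-omnianize any vertex. The new vertex $w$ has a leaf child, so it is fine; the reticulation $v$ is unaffected in the relevant sense since its child is unchanged; and crucially, adding a leaf to an outgoing arc of $u$ does not change the children of any other vertex, so no previously-good vertex becomes an omnian. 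Performing this once per omnian therefore eliminates all omnians while introducing none, so after these additions the network is tree-child by the omnian characterization. This yields $L_{\cTC}(N) \le (\text{number of omnians})$.

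For the lower bound, the main idea is that a single leaf addition can ``fix'' at most one omnian, so at least as many additions as there are omnians are required. More carefully, I would argue that each omnian of $N$ must, in any sequence of leaf additions producing a tree-child network, have a leaf added to one of its outgoing arcs (or otherwise have its omnian status destroyed). The subtle point is that adding a leaf to an arc $e$ can only affect the omnian status of the \emph{tail} of $e$: subdividing $e = uv$ replaces the child $v$ of $u$ by a tree vertex $w$, so only $u$ can stop being an omnian as a direct result. Adding a leaf never removes a reticulation child from any vertex other than by this subdivision mechanism, and it never turns a reticulation into a non-reticulation for any other vertex. Thus a leaf added to arc $e$ can rescue at most the single omnian $\tail(e)$. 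Since the omnians of $N$ are distinct vertices and each needs to be rescued, at least one leaf addition per omnian is needed, giving $L_{\cTC}(N) \ge (\text{number of omnians})$.

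The main obstacle I anticipate is making the lower-bound bookkeeping fully rigorous, in particular ruling out the possibility that leaf additions interact in a way that lets one addition help with two omnians, or that an omnian gets fixed ``for free'' by an addition whose tail is a different vertex. I would address this by tracking children sets carefully: define the relevant property ``$u$ is an omnian'' purely in terms of the children of $u$ and their types, and observe that adding a leaf to $e=uv$ changes only the children set of $u$ (replacing $v$ by a new tree vertex $w$), leaves every reticulation still a reticulation, and can convert a reticulation into a tree vertex only at $\tail(e)$. A short induction on the number of additions, or an injective assignment of each original omnian to a distinct addition that fixes it, then closes the argument. Combining the two bounds gives equality, and the construction in the upper-bound step establishes the ``moreover'' statement.
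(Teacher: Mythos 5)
Your proposal is correct and follows essentially the same route as the paper's proof: both use the characterization that tree-child means no omnians, show that a leaf addition changes only the children of the subdivided arc's tail (so each addition removes at most one omnian, giving the lower bound), and obtain the upper bound by adding a leaf to one outgoing arc of each omnian. Your extra care about the lower-bound bookkeeping (the injective assignment of omnians to additions) just makes explicit what the paper states tersely as ``the parent-child combinations remain unchanged for the rest of the network.''
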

% \begin{proof}
% 	By definition, a network is tree-child if and only if it contains no 
% 	omnians. We show that every leaf addition can result in a network with one 
% 	omnian fewer than that of the original network.
% 	Let~$uv$ be an arc where~$u$ is an omnian. Add a leaf~$x$ to~$uv$. In the 
% 	resulting network,~$u$ has a child (the parent of~$x$) that is a tree 
% 	vertex, and it is no longer an omnian. The newly added tree vertex has a 
% 	leaf child~$x$; the parent-child combinations remain unchanged for the rest 
% 	of the network, so at most one omnian (in this case~$u$) can be removed per 
% 	leaf 
% 	addition. It follows that~$L_{\cTC}(N)$ is at least the number of omnians 
% 	in~$N$. By targeting arcs with omnian tails, we can remove at least one 
% 	omnian per every leaf addition, so that~$L_{\cTC}(N)$ is at most the number 
% 	of omnians in~$N$. Therefore,~$L_{\cTC}(N)$ is exactly the number of 
% 	omnians in~$N$.
% \end{proof}

\begin{lemma}\label{lem:L_TCisPoly}
    Let~$N$ be a network. Then~$L_{\cTC}(N)$ can be computed in~$O(|N|)$ time.
\end{lemma}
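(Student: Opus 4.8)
The plan is to combine the structural characterization from \Cref{lem:L_TC=Omnian} with a single linear-time traversal of the network. By \Cref{lem:L_TC=Omnian}, we have $L_{\cTC}(N)$ equal to the number of omnians, so computing the measure reduces to counting the omnians of $N$. First I would observe that an omnian is, by definition, an internal vertex all of whose children are reticulations; since $N$ is binary, each internal vertex has at most two children, so deciding whether a given vertex is an omnian requires inspecting only its (at most two) outgoing arcs and checking whether each child is a reticulation. Checking whether a vertex is a reticulation is itself a constant-time test on its indegree.

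The core of the argument is then an accounting of the total work. I would iterate over every internal vertex $v$ of $N$, and for each $v$ examine its outgoing arcs to test whether all children are reticulations. Summing over all vertices, the total number of arc inspections is proportional to $\sum_{v} \deg^+(v)$, the sum of outdegrees, which equals the number of arcs of $N$. Since $N$ is a binary network, the number of arcs is linear in the number of vertices, so the whole computation is $O(|N|)$, where $|N|$ denotes the size of the network (number of vertices plus arcs). The indegree tests needed to identify reticulations can be precomputed in one preliminary pass over the arcs, again in $O(|N|)$ time, so they do not affect the asymptotic bound.

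I anticipate that the only subtlety worth stating carefully is the meaning of $|N|$ and the implicit assumption that the network is represented by an adjacency structure allowing outgoing arcs and in/out degrees to be accessed in time proportional to their number. Once this representation is fixed, there is no genuine obstacle: the result follows immediately from the fact that counting omnians is a local property testable by a constant number of degree queries per vertex, and from \Cref{lem:L_TC=Omnian} which certifies that this count is exactly $L_{\cTC}(N)$. The proof is therefore short, and the main content is simply making the linear bound on the arc count explicit and invoking the preceding lemma to equate the count with the proximity measure.
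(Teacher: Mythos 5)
Your proposal is correct and follows essentially the same approach as the paper's proof: reduce to counting omnians via \Cref{lem:L_TC=Omnian}, then count them in a single pass by checking the indegrees of each vertex's children, using the bounded degree of a binary network to bound the per-vertex work by a constant. The only cosmetic difference is that you make the arc-counting and representation assumptions explicit, whereas the paper simply notes that each vertex has degree at most~$3$.
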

% \begin{proof}
%     We first show that the number of omnians of~$N$ can be computed in~$O(|N|)$ 
%     time, by checking, for each vertex, the indegrees of its children. A vertex 
%     is an omnian if and only if all of its children are of indegree-2. Since 
%     the degree of every vertex is at most 3, each search within the for loop 
%     takes constant time. The for loop iterates over the vertex set which is of size~$O(N)$. 
%     By \Cref{lem:L_TC=Omnian}, since~$L_{\cTC}(N)$ is the number of omnians in~$N$, we can compute~$L_{\cTC}(N)$ in~$O(|N|)$ time.
% \end{proof}

It has been shown already that~$L_{\cTB}(N)$ can be computed in~$O(|N|^{3/2})$ time 
where~$|N|$ is the number of vertices in~$N$~\cite{francis2018new}. We show that this can in fact be computed in~$O(|N|)$ time.

\begin{lemma} \label{lem:L_TB=W-fences}
    Let $N$ be a network. Then $L_{\cTB}(N)$ is equal to the number of $W$-fences. Moreover, $N$ is tree-based by adding a leaf to any arc in each $W$-fence in $N$.
\end{lemma}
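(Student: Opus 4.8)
The plan is to work entirely through the zig-zag decomposition and the characterization of \Cref{lem:TB=NoW}: a network is tree-based if and only if its (unique, by \Cref{thm:UniqueMaxZigZag}) zig-zag decomposition contains no $W$-fence. Thus it suffices to track how the number of $W$-fences changes under leaf additions. The central tool will be a \emph{locality} observation: when a leaf~$x$ is added to an arc~$e=uv$, the arc~$e$ is replaced by $uw$ and $wv$ and a pendant arc~$wx$ is created, where~$w$ is a new tree vertex and~$x$ a new leaf. Since no pre-existing vertex changes its in- or out-degree, the only arcs and tail/head-incidences that change are those of the single maximal zig-zag trail in which~$e$ participated (for a non-root~$e$, both the tail- and head-connection of~$e$ lie in its own trail; the root arc touches at most the trail of the other parent of the root's child). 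Hence a single leaf addition modifies at most one maximal zig-zag trail and therefore can destroy at most one $W$-fence.

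This immediately gives the lower bound. Starting from~$N$ with~$W$ many $W$-fences, every leaf addition decreases the number of $W$-fences by at most one, so reaching a $W$-fence-free (i.e.\ tree-based) network requires at least~$W$ additions; that is, $L_{\cTB}(N) \ge W$.

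For the matching upper bound I would analyse the effect of subdividing a single arc of a $W$-fence. First recall the structure forced by maximality: along a fence the shared-tail and shared-head connections alternate, a shared tail being a tree vertex and a shared head a reticulation, and $\tail(a_1)$ is a reticulation exactly when $a_1$ connects to $a_2$ through a shared head. Consequently an even-length non-crown fence has both endpoint tails of the \emph{same} type --- tree vertices for an $M$-fence and reticulations for a $W$-fence --- so \emph{any} maximal trail possessing an endpoint arc whose tail is a tree vertex cannot be a $W$-fence. Now subdivide an arbitrary arc~$a_i$ of a $W$-fence. The new tree vertex~$w$ has indegree~$1$, so~$uw$ is separated from $wv,wx$ at~$w$; the pendant arc~$wx$ is always an endpoint arc with tree-vertex tail~$w$, so the piece containing~$wx$ is not a $W$-fence. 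In the complementary piece, the arc~$uw$ is an endpoint (its head~$w$ has indegree~$1$): if~$a_i$ is internal then $\tail(a_i)=\tail(uw)$ is a tree vertex shared with the tail-side neighbour, so this piece again has a tree-vertex endpoint tail; if~$a_i$ is an endpoint arc of the $W$-fence then~$uw$ forms a length-$1$ trail (an $N$-fence) on its own. Either way, no resulting piece is a $W$-fence.

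Combining these, by the locality observation the additions to distinct $W$-fences do not interfere, so adding one leaf to any arc of each of the~$W$ many $W$-fences destroys every $W$-fence and creates none, yielding a network with no $W$-fences, which is tree-based by \Cref{lem:TB=NoW}. Hence $L_{\cTB}(N)\le W$, and together with the lower bound $L_{\cTB}(N)=W$, with the stated flexibility that any arc of each $W$-fence may be chosen. I expect the main obstacle to be the structural case analysis of the final subdivision step --- verifying that both pieces fail to be $W$-fences in every position of~$a_i$ (including the endpoint arcs) and that the subdivision never merges fragments into a new $W$-fence; the cleanest route is exactly the ``tree-vertex endpoint tail'' criterion above, which sidesteps a full reconstruction of the pieces.
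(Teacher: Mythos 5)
Your proposal is correct and takes essentially the same route as the paper's own proof: both rest on \Cref{lem:TB=NoW} and the uniqueness of the zig-zag decomposition (\Cref{thm:UniqueMaxZigZag}), both use the locality of a leaf addition (only the trail containing the subdivided arc changes, giving $L_{\cTB}(N) \geq$ the number of W-fences), and both show that subdividing an arc of a W-fence splits it into pieces that are not W-fences, giving the matching upper bound. Your explicit case analysis for an arbitrary arc $a_i$ (via the ``tree-vertex endpoint tail'' criterion) and the spelled-out lower bound merely fill in what the paper compresses into ``all other arcs remain in the same maximal zig-zag trails'' and ``a quick check.''
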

% \begin{proof}
%     By \Cref{lem:TB=NoW}, a network is tree-based if and only if it 
% 	contains no W-fences. We show that every leaf addition can result in a 
% 	network with one W-fence fewer than that of the original network.
% 	Suppose that~$N$ contains at least one 
% 	W-fence. 
% 	Otherwise we may conclude that the network is tree-based by 
% 	\Cref{lem:TB=NoW}. 
% 	Let~$W=(a_1,a_2,\ldots, a_k)$ be a W-fence in~$N$ where $a_i = u_iv_i$ 
% 	for~$i\in[k]$, and add a leaf~$x$ to~$a_1$; let~$p_x$ be the tree vertex 
% 	parent of~$x$. In the resulting network, the arcs 
% 	in~$\{u_1p_x,p_xv_1,p_xx, a_2, a_3,a_4,\ldots, a_k\}$ are decomposed into 
% 	their unique maximal zig-zag trails (\Cref{thm:UniqueMaxZigZag}) as two 
% 	N-fences~$(u_1p_x)$ and~$(a_k,a_{k-1},\ldots,a_3,a_2,p_xv_1,p_xx)$. All 
% 	other arcs remain in the same maximal zig-zag trails as that of~$N$. 
% 	Therefore the number of W-fences has gone down by exactly one. This can be 
% 	repeated for all W-fences in the network; it follows that~$L_{\cTB}(N)$ is 
% 	the number of W-fences in~$N$.
% \end{proof}

\begin{lemma}\label{lem:L_TB=polynomial}
	Let~$N$ be a network. Then $L_\cTB(N)$ can be computed in~$O(|N|)$ time. 
\end{lemma}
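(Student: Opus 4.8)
The plan is to reduce the computation of $L_{\cTB}(N)$ to counting $W$-fences, which by \Cref{lem:L_TB=W-fences} equals $L_{\cTB}(N)$. Since the bottleneck is producing the set of maximal zig-zag trails and identifying which are $W$-fences, the task becomes showing that the unique zig-zag decomposition (which exists by \Cref{thm:UniqueMaxZigZag}) can be constructed in $O(|N|)$ time. The $W$-fences are then exactly those maximal zig-zag trails whose two endpoint tails are both reticulations, so once we have the decomposition we can classify each trail and tally the $W$-fences in time linear in the total number of arcs, which is $O(|N|)$.

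**First I would** describe how to build all maximal zig-zag trails by a single linear-time traversal of the arc set. The key observation is that a zig-zag trail extends an arc $a_i$ to $a_{i+1}$ by sharing either a tail (at a tree vertex or the root) or a head (at a reticulation). Because each arc has exactly one tail and one head, and each vertex is incident to a bounded number of arcs (the network is binary, so every vertex has degree at most $3$), the "zig-zag adjacency" between arcs has bounded degree: each arc can be adjacent to at most a constant number of other arcs in the trail structure. Concretely, I would maintain a visited-marker on each (non-root) arc, and for each unvisited arc start walking greedily in both directions, at each step moving to the unique unvisited continuation that shares a tail or head; this enumerates each maximal zig-zag trail exactly once and touches each arc a constant number of times.

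**The main obstacle** I expect is handling the structural bookkeeping cleanly, in particular distinguishing the four trail types (crowns, M-, N-, and W-fences) during or after the traversal, and correctly detecting crowns (where the trail closes up into a cycle on $k \ge 4$ arcs) so that they are not misclassified. One must be careful that the greedy two-sided walk terminates properly: it terminates either because both extensions reach arcs with no further unvisited continuation (a fence) or because the walk returns to its starting arc (a crown). Once a maximal trail is isolated, determining whether $\tail(a_1)$ and $\tail(a_k)$ are both reticulations is a constant-time check per trail, so the classification contributes only $O(|N|)$ overall.

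Putting these together, the algorithm first computes the unique zig-zag decomposition in $O(|N|)$ time by the greedy traversal, then scans the resulting trails to count those that are $W$-fences, again in $O(|N|)$ time. By \Cref{lem:L_TB=W-fences}, this count equals $L_{\cTB}(N)$, so the whole computation runs in $O(|N|)$ time, as claimed. This improves on the previously known $O(|N|^{3/2})$ bound of~\cite{francis2018new}.
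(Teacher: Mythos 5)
Your proposal is correct and follows the same route as the paper: both reduce the computation to counting $W$-fences in the unique zig-zag decomposition, which equals $L_{\cTB}(N)$ by \Cref{lem:L_TB=W-fences}. The only difference is that the paper obtains the $O(|N|)$-time construction of the decomposition by citing Proposition 5.1 of \cite{hayamizu2021structure}, whereas you re-derive it with an explicit greedy traversal --- which is valid, since in a binary network each non-root arc has at most one tail-sharing neighbour (at a tree-vertex tail) and at most one head-sharing neighbour (at a reticulation head), so the arc-adjacency structure has maximum degree two and its components are exactly the fences and crowns.
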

% \begin{proof}
%    Finding the maximal zig-zag decomposition takes $O(|N|)$ time (Proposition 5.1 of \cite{hayamizu2021structure}). Counting the number of $W$-fences in the decomposition gives~$L_\cTB(N)$ by \Cref{lem:L_TB=W-fences}.
% \end{proof}

Interestingly, computing~$L_\cOr(N)$ proves to be a difficult problem, although the leaf addition proximity measure is easy to compute for its neighbouring network classes. We prove the following in~\Cref{sec:Hardness}.
\begin{thmn}[\ref{thm:L_Or=Hard}]
	Let~$N$ be a network. Computing $L_\cOr(N)$ is NP-hard. 
\end{thmn}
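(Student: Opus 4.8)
The plan is to give a polynomial-time many-one reduction from \dpVC{} (the NP-hard \textsc{Degree-3 Vertex Cover} problem announced in the introduction) to the decision version of \dpLD{}. Given an instance $(G,k)$ of \dpVC{} with $G$ a graph of maximum degree $3$, I would construct in polynomial time a network $N_G$ together with a budget $k'$ (some function of $k$ and of the size of $G$) such that $N_G$ can be made orchard with at most $k'$ leaf additions if and only if $G$ has a vertex cover of size at most $k$. Since computing $L_\cOr(N)$ exactly is at least as hard as deciding whether $L_\cOr(N)\le k'$, NP-hardness of the optimization problem follows from NP-completeness of the decision problem.

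The heart of the construction is a local gadget (the \Gad{} operator suggested by the \texttt{\textbackslash DeclareMathOperator\{\textbackslash Gad\}} macro in the preamble) that encodes vertices and edges of $G$. For each vertex $v$ of $G$ I would build a \emph{vertex gadget} containing one or more reticulations arranged so that the natural cherry covers / HGT-consistent labellings of the gadget come in exactly two ``states'' — intuitively, ``$v$ is in the cover'' versus ``$v$ is out'' — where placing a leaf inside the gadget corresponds to selecting $v$. For each edge $uv$ of $G$ I would build an \emph{edge gadget} wired between the gadgets of $u$ and $v$ that introduces a cyclic obstruction in the cherry cover auxiliary graph (by \Cref{thm:AcyclicCherryCover}, a cycle is exactly what prevents the network from being orchard) unless at least one of its two endpoint gadgets has been put into the ``selected'' state by a leaf addition. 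The degree-$3$ bound on $G$ keeps the number of incident edge gadgets per vertex gadget constant, which is what lets me keep vertex degrees bounded and control interference between gadgets. I would verify correctness by translating between the two characterizations as convenient: using \Cref{thm:OrchIFFHori} (HGT-consistent labellings) to reason about how a single added leaf locally ``breaks'' a reticulation that would otherwise force a non-horizontal arc, and using \Cref{thm:AcyclicCherryCover} to argue globally that all induced cycles in the auxiliary graph are destroyed precisely when the corresponding set of selected vertices forms a vertex cover.

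The argument then splits into the two standard directions. For completeness (a vertex cover of size $\le k$ yields $\le k'$ leaf additions), I would take a vertex cover $C$ of $G$, add the prescribed leaf inside each vertex gadget of a vertex in $C$ (plus possibly a fixed number of ``structural'' leaves that every solution must pay regardless), and exhibit an explicit acyclic cherry cover of the resulting network, checking edge-gadget by edge-gadget that every potential cycle now passes through a selected endpoint and is thereby opened. For soundness (any solution of size $\le k'$ yields a vertex cover of size $\le k$), I would argue that an optimal or near-optimal solution can be normalized so that each added leaf lies inside a single gadget, that leaves added anywhere but the designated ``selection'' arcs are wasteful or can be rerouted without increasing the count, and that every edge gadget still forces at least one of its endpoints to be selected — so the set of selected vertices is a cover of the required size. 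This normalization is the subtle part.

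The main obstacle I anticipate is exactly this soundness/normalization step: I must rule out ``clever'' solutions that break the intended cycle-to-edge correspondence by adding leaves in unexpected places (for instance, a single leaf that simultaneously neutralizes several edge gadgets, or leaves on shared arcs between adjacent gadgets that cheat the vertex-cover accounting). Controlling this requires designing the gadgets so that the \emph{only} cost-effective way to acyclify an edge gadget is through a genuine vertex selection, and proving an exchange argument that pushes any stray leaf onto a canonical selection arc without increasing the total and without re-introducing a cycle. Getting the budget $k'$ to line up exactly — so that the number of forced structural additions is a constant independent of the chosen cover and the remaining budget counts vertices one-for-one — is where the bookkeeping will be most delicate, and I would expect to fine-tune the gadget (e.g. adding dummy leaves or pendant structure) to make the accounting tight.
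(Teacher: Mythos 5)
Your overall blueprint matches the paper's: both reduce from \dpVC{}, both build a gadget per vertex of $G$ and wire adjacent gadgets together so that, by \Cref{thm:AcyclicCherryCover}, an uncovered edge of $G$ forces a cycle in the cherry cover auxiliary graph, and both use an explicit HGT-consistent labelling (\Cref{thm:OrchIFFHori}) for the completeness direction. However, your proposal leaves open precisely the step the whole proof hinges on: soundness. You plan to handle it by a normalization/exchange argument (rerouting ``stray'' leaves onto canonical selection arcs without increasing the count), you correctly flag this as the subtle part, and you offer no gadget for which such an argument could actually be carried out. As it stands, this is a genuine gap, not a deferred technicality: without a concrete gadget whose behaviour under \emph{arbitrary} leaf placements is controlled, the claim that every size-$k'$ solution can be normalized into a vertex cover is an unproved assumption.

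The paper's resolution is to design the gadget so that no normalization is needed at all. The core of each $\Gad(v)$ is a long N-fence (the ``principal part'', 15 arcs), and the key structural fact (\Cref{lem:n_fence_unique}) is that in a binary tree-based network every cherry cover must contain the \emph{same} reticulated cherry shapes along any N-fence of length at least $3$: the cover of such a fence is forced, not merely ``natural''. Combined with \Cref{lem:n_fence_cycle}, this gives that whenever the principal parts of two adjacent vertices' gadgets both survive as N-fences, \emph{every} cherry cover auxiliary graph has a cycle. Since a leaf addition anywhere else only subdivides arcs and so preserves the directed paths between the two fences, it follows directly (\Cref{lem:LORDimpliesVC}) that any set $A$ of leaf additions making $N_G$ orchard must, for each edge $uv\in E(G)$, contain an arc of the principal part of $\Gad(u)$ or $\Gad(v)$; hence the set of touched gadgets is automatically a vertex cover of size at most $|A|$. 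There are no wasteful placements to reroute and no exchange argument, and the budget is exactly $k$ with no ``structural'' additions. This design also disposes of your worry about one leaf neutralizing several edge gadgets: a leaf in the principal part of $\Gad(v)$ handles exactly the three edges incident to $v$, which is precisely the intended vertex-cover semantics. Until you identify a gadget with this kind of forced-cover property, your soundness step cannot be completed.
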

We also include the following theorem which states that when considering leaf addition proximity measures for orchard networks, it suffices to consider leaf additions to reticulation arcs. We shall henceforth assume that all leaf additions are on reticulation arcs.
\begin{theorem}[Theorem 4.1 of \cite{susanna2022making}]\label{thm:OnlyAddToRetArcs}
    A network~$N$ is orchard if and only if the network obtained by adding a leaf to a tree arc of~$N$ is orchard.
\end{theorem}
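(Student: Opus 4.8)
The plan is to prove the equivalence through the HGT-consistent labelling characterization of \Cref{thm:OrchIFFHori}, since adding a leaf to a tree arc leaves the reticulation structure --- and hence the set of potentially horizontal arcs --- completely untouched. Write $N' = N + (e,x)$ where $e = uv$ is a tree arc, so $v$ is not a reticulation; subdividing $e$ by $w$ turns $w$ into a tree vertex (indegree $1$, outdegree $2$, to $v$ and to the new leaf $x$) and keeps $v$ of indegree $1$. Thus $N$ and $N'$ have exactly the same reticulations, and every reticulation arc of $N'$ has the same pair of endpoint labels, under any labelling agreeing on $V(N)$, as in $N$.

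For the forward direction I would start from an HGT-consistent labelling $t$ of $N$. Because $uv$ is a tree arc, the non-temporal conditions force the strict inequality $t(u) < t(v)$. I would then set $t'(y) = t(y)$ for $y \in V(N)$, choose $t'(w)$ with $t(u) < t'(w) < t(v)$ (for instance the midpoint), and choose $t'(x) > t'(w)$. The arc conditions on $uw$, $wv$, $wx$ are then immediate, the new tree vertex $w$ has $v$ as a strictly larger child, and every previously witnessing child of $u$ remains valid (or $w$ itself serves as one). Since no reticulation arc has changed, each reticulation of $N'$ still has exactly one incoming horizontal arc, so $t'$ is HGT-consistent and $N'$ is orchard.

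For the converse I would take an HGT-consistent labelling $t'$ of $N'$ and restrict it to $V(N)$, which corresponds to deleting $x$ and suppressing $w$. The only new arc constraint is $uv$: chaining $t'(u) < t'(w) < t'(v)$ (strict since neither $w$ nor $v$ is a reticulation) yields $t'(u) < t'(v)$, as required for a tree arc. The witness condition for $u$ survives because $v$ inherits the role $w$ may have played, and all conditions at reticulations are unchanged, so the restriction is an HGT-consistent labelling of $N$ and $N$ is orchard.

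The only real care needed --- and the step I would treat as the main obstacle --- is the bookkeeping that guarantees the three labelling conditions transfer in both directions: the strictness ``equality allowed only at reticulations'' on the subdivided and suppressed arcs, the ``every internal vertex has a strictly larger child'' condition at $u$ and at the new vertex $w$, and the observation that the reticulation incidences (and thus the horizontal arcs certifying HGT-consistency) are genuinely preserved. Once these are verified, the equivalence follows directly from \Cref{thm:OrchIFFHori}.
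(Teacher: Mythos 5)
Your argument is correct, but note that this paper contains no proof of the statement to compare against: the theorem is imported verbatim as Theorem 4.1 of \cite{susanna2022making} and used as a black box (chiefly inside the proof of \Cref{lem:L=H}). Your labelling-surgery proof via \Cref{thm:OrchIFFHori} stands on its own and all the delicate points check out: since $e=uv$ is a tree arc, $v$ is not a reticulation, so every non-temporal labelling satisfies the strict inequality $t(u)<t(v)$, leaving room to give the subdivision vertex $w$ an intermediate label and the new leaf $x$ a larger one; conversely, restricting a labelling of $N+(e,x)$ to $V(N)$ gives $t(u)<t(w)<t(v)$ because neither $w$ nor $v$ is a reticulation, and the witness-child condition at $u$ transfers since $v$ can take over the role of $w$ (this also covers the case where $u$ is itself a reticulation whose only child is the subdivision vertex). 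The crucial structural observation — that $N$ and $N+(e,x)$ have exactly the same reticulations and the same reticulation arcs with the same endpoints, so horizontality and hence HGT-consistency is untouched in both directions — is stated and used correctly. In fact, this extend/restrict technique is precisely what the paper itself does for leaves added to \emph{reticulation} arcs in its proof of \Cref{lem:L=H}, so your proof fits naturally into the paper's toolkit and would serve as a self-contained replacement for the external citation.
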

The rest of the paper will now focus on the problem of computing~$L_\cOr(N)$.

\section{Hardness Proof}
\label{sec:Hardness}

In this section, we show that computing~$L_\cOr(N)$ is NP-hard by reducing from  
% and therefore $V_\cOr(N)$ (by \Cref{lem:L=H}) is NP-hard using a reduction from 
degree-3 vertex cover. 

\medskip
\noindent
\fbox{\parbox{0.8\linewidth}{
{\sc \dpVC{} (Decision)}\\
{\bf Input:} A 3-regular graph $G = (V, E)$ and a natural number $k$.\\
{\bf Decide:} Does G have a vertex cover with at most $k$ vertices?}}
\medskip

\medskip
\noindent
\fbox{
\parbox{0.8\linewidth}{
{\sc \dpLD{} (Decision)}\\
{\bf Input:} A network $N$ on a set of taxa $X$ and a natural number $k$.\\
{\bf Decide:} Can $N$ be made orchard with at most $k$ leaf additions?}
}
\medskip

\begin{figure}
    \centering    
    \includegraphics[width=\columnwidth]{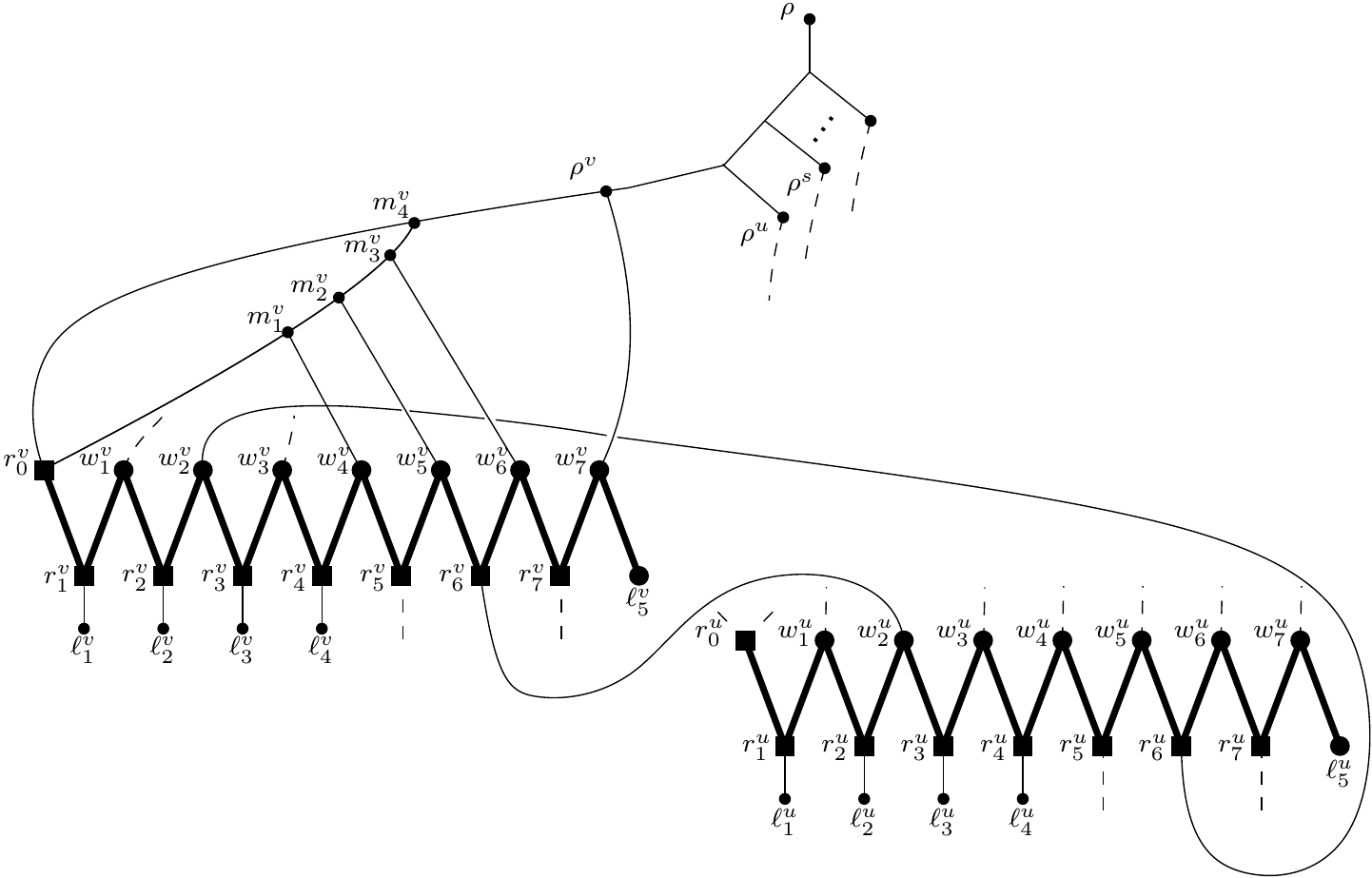}
    \caption{Sketch of the network~$N_G$ for the case when~$G$ contains an edge~$uv$.} %vertices~$\{a,b,s,u,v,w\}$ (and possibly other vertices), as well as edges~$sv,uv,wv,au,bu$ (and possibly other edges).}
    \label{fig:VCtoLORDreduction}
\end{figure}

\medskip
We now describe the reduction from \dpVC{} to \dpLD{}. For a graph~$G$, let~$V(G)$ and~$E(G)$ be its vertex and edge sets, respectively. Given an instance~$(G,k)$ of \dpVC, construct an instance~$(N_G,k)$ of \dpLD{} as follows (see \Cref{fig:VCtoLORDreduction}):

\begin{enumerate}
    \item For each vertex~$v$ in $V(G)$, construct a gadget~$\Gad(v)$ as described below.
    In what follows, vertices of the form $\ell_i^v$ are leaves, vertices $r_i^v$ are reticulations, and vertices $w_i^v$, $m_i^v$ and $\rho^v$ are tree vertices.
    
    The key structure in~$\Gad(v)$ is an N-fence with $15$ arcs, starting with the arc 
    $\boldsymbol{r_0^vr_1^v}$, then followed by arcs of the form $\boldsymbol{w_i^vr_i^v, w_i^vr_{i+1}^v}$ for each $i\in[6]$, and finally the arcs $\boldsymbol{w_7^vr_7^v,w_7^v\ell_5^v}$. This set of arcs, in bold type, is called the \emph{principal part} of~$\Gad(v)$. 
    In addition,the reticulations $r_1^v, r_2^v,r_3^v,r_4^v$ have leaf children $\ell_1^v,\ell_2^v, \ell_3^v, \ell_4^v$ respectively.
    
    Above the principal part of~$\Gad(v)$, add a set of tree vertices $m_1^v,m_2^v,m_3^v,m_4^v,\rho^v$ with the following children: $m_1^v$ has children $r_0^v$ and $w_4^v$, $m_2^v$ has children $m_1^v$ and $w_5^v$, $m_3^v$ has children $m_2^v$ and $w_6^v$, $m_4^v$ has children $m_3^v$ and $r_0^v$, and $\rho^v$ has children $m_4^v$ and $w_7^v$ (see \Cref{fig:VCtoLORDreduction}).
    
    This completes the construction of~$\Gad(v)$. Note that so far, the vertices $w_1^v,w_2^v,w_3^v$ have no incoming arcs, and $r_5^v,r_6^v,r_7^v$ have no outgoing arcs. Such arcs will be added later to connected different gadgets together.

    \item Connect the vertices $\rho^v$ from each~$\Gad(v)$ as follows: take some ordering of the vertices~$\{v_1,\ldots, v_g\}$ of~$G$. Add a vertex~$\rho$ and vertices~$s_i$ for~$i\in [g-1]$. Add arcs~$\rho s_1$ and also arcs from the set~$\{s_is_{i+1}: i\in [g-2]\}$, as well as arcs from the set~$\{s_i\rho^{v_i}: i\in [g-1]\}$, and finally an arc~$s_{g-1}\rho^{v_g}$.
    
    \item Next add arcs between the gadgets corresponding to adjacent vertices in $G$, in the following way:
    for every pair of adjacent vertices $u,v$ in $G$, add an arc connecting one of the vertices $r_5^u,r_6^u,r_7^u$ in $\Gad(u)$ to one of the vertices $w_1^v,w_2^v,w_3^v$ in $\Gad(v)$  (and, symmetrically, an arc connecting one of $r_5^v,r_6^v,r_7^v$ to one of $w_1^u,w_2^u,w_3^u$). The exact choice of vertices connected by an arc does not matter, except that we should ensure each vertex is used by such an arc exactly once.
    Formally: for each vertex $v$ in $G$ with neighbours $a,b,c$, fix two (arbitrary) mappings $\pi_v:\{a,b,c\} \rightarrow \{1,2,3\}$ and $\tau_v:\{a,b,c\}\rightarrow \{5,6,7\}$. Then for each pair of adjacent vertices $u,v$ in $G$, add an arc from $r_{\tau_u(v)}^u$ to $w_{\pi_v(u)}^v$ (and, symmetrically, add an arc from $r_{\tau_v(u)}^v$ to $w_{\pi_u(v)}^u$).

    \item Finally, for each vertex~$v$ in~$G$, label the vertices~$\{\ell_i^v: i\in[5]\}$ in~$\Gad(v)$ by~$\ell_i^v$.
\end{enumerate}

Call the resulting graph~$N_G$; it is easy to see that~$N_G$ is directed and acyclic with a single root~$\rho$. Therefore it is a network on the leaf-set~$\{\ell_i^v:i\in[5] \text{, $v\in V(G)$}\}.$ 
% For each gadget~$\Gad(v)$, we shall call the arcs in bold type in the above reduction the \emph{principal part}.
\markj{As the arcs of~$N_G$ are decomposed into M-fences and N-fences, we have the following observation.}

\begin{observation}\label{obs:N_GisTB}
    Let~$G$ be a 3-regular graph and let~$N_G$ be the network obtained by the reduction. Then~$N_G$ is tree-based.
\end{observation}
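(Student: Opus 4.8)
The plan is to invoke \Cref{lem:TB=NoW}, which states that a network is tree-based if and only if it contains no W-fences. So it suffices to show that the unique zig-zag decomposition of $N_G$ (guaranteed by \Cref{thm:UniqueMaxZigZag}) contains no W-fence. Recall a W-fence is a maximal zig-zag trail $(a_1,\ldots,a_k)$ in which both $\tail(a_1)$ and $\tail(a_k)$ are reticulations. Since the decomposition into maximal zig-zag trails is unique, I only need to argue that no maximal zig-zag trail of $N_G$ can have a reticulation as the tail of both its first and last arc.

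**First I would** catalogue the reticulations of $N_G$ and describe their local neighbourhoods, since the tails of the extremal arcs of any zig-zag trail are precisely where the trail terminates. The reticulations are exactly the vertices $r_i^v$. A maximal zig-zag trail ends at a vertex $\tail(a_1)$ that is a reticulation only when that arc cannot be extended, i.e.\ the reticulation's outgoing arc is already the next arc or there is no further arc sharing its head/tail to continue the alternation. The key structural fact provided by the construction is that the author explicitly states the arcs of $N_G$ decompose into M-fences and N-fences only; so I would trace each reticulation and confirm it sits at the reticulation-end of an N-fence (where exactly one extremal tail is a reticulation) rather than at both ends of some trail. Concretely, each reticulation $r_i^v$ in the principal part has a prescribed pair of incoming arcs (from $w_i^v$ and $w_{i-1}^v$, or from cross-gadget linking arcs) and a single outgoing arc, and I would follow the zig-zag alternation from each such vertex to see that it extends into a fence terminating at a tree vertex on the opposite end.

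**The main obstacle** will be the cross-gadget linking arcs and the vertices $r_5^v, r_6^v, r_7^v$ and $w_1^v, w_2^v, w_3^v$, whose incoming/outgoing arcs are supplied only in Step~3 of the reduction. I must verify that attaching a reticulation $r_{\tau_u(v)}^u$ (via a linking arc) to a tree vertex $w_{\pi_v(u)}^v$ does not create a maximal zig-zag trail with reticulations at both ends. The natural argument is that every such linking arc has a reticulation as its head (one of $r_5,r_6,r_7$) and a tree vertex as its tail-side continuation, so following the alternation from any reticulation always reaches a tree vertex before the trail closes off, keeping every trail an N-fence (odd length, exactly one reticulation tail) or an M-fence (tree-vertex tails throughout). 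I would make this rigorous by checking, reticulation by reticulation, that the zig-zag alternation out of each $r_i^v$ terminates at a tree vertex, ruling out the W-fence case; the potentially delicate point is ensuring the linking arcs do not splice two N-fences into a single W-fence, which I would resolve by noting that each $r_j^v$ ($j \in \{5,6,7\}$) and each $w_j^v$ ($j\in\{1,2,3\}$) is incident to exactly one linking arc (as guaranteed in Step~3), so no reticulation becomes a shared interior junction of two otherwise-separate fences.
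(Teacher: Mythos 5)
Your overall strategy is exactly the paper's: invoke \Cref{lem:TB=NoW} and show that the (unique, by \Cref{thm:UniqueMaxZigZag}) zig-zag decomposition of $N_G$ consists only of M-fences and N-fences. However, the step you yourself single out as delicate---the linking arcs---is handled on the basis of a misreading of the construction, and the justification you offer there does not do the required work. In Step~3 of the reduction the linking arcs are oriented \emph{from} the reticulations $r^u_5,r^u_6,r^u_7$ \emph{to} the tree vertices $w^v_1,w^v_2,w^v_3$: they supply the missing single outgoing arc of each $r^u_j$ and the missing single incoming arc of each $w^v_i$. You claim the opposite (``every such linking arc has a reticulation as its head'', and earlier that reticulations of the principal part may have incoming arcs ``from cross-gadget linking arcs''). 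This is not merely a typo: arcs in your claimed direction could not even be added, since $w^v_1,w^v_2,w^v_3$ already have outdegree $2$ and $r^u_5,r^u_6,r^u_7$ already have indegree $2$ inside the principal part, so your direction would violate binarity. Moreover, your stated reason for why no splicing occurs---that each such vertex is incident to exactly one linking arc---is insufficient even in principle: a \emph{single} arc entering a reticulation that already has another parent would share a head with that other arc and would merge two trails into one. The ``exactly once'' condition in Step~3 only guarantees that $N_G$ is a well-defined binary network; it is not what rules out W-fences.

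The correct resolution, which is what the paper's proof uses, is a degree argument on the actual orientation: a linking arc $r^u_{\tau_u(v)}w^v_{\pi_v(u)}$ has a tail of outdegree $1$ and a head of indegree $1$, so no other arc shares its tail or its head, and it therefore forms a maximal zig-zag trail of length $1$ by itself, i.e., an N-fence (the same holds for the leaf arcs $r^v_i\ell^v_i$, $i\in[4]$). Hence no trail can pass through a linking arc at all, and the decomposition of $N_G$ is: the principal part of each $\Gad(v)$, an N-fence of length $15$ with extremal tails $r^v_0$ (reticulation) and $w^v_7$ (tree vertex); the arcs leaving principal parts, N-fences of length $1$; and the arcs among $\rho$, the $s_i$, the $m^v_i$ and $\rho^v$, which form M-fences (note the one through $r^v_0$, namely $(m^v_1w^v_4,\, m^v_1r^v_0,\, m^v_4r^v_0,\, m^v_4m^v_3)$, has length $4$, so the reticulation $r^v_0$ has its incoming arcs covered by an M-fence rather than an N-fence---a case your ``trace each reticulation into an N-fence'' plan would need to accommodate). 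With these corrections your case analysis goes through and coincides with the paper's proof.
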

% \begin{proof}
%     It is easy to check that the arcs of~$N_G$ are decomposed into M-fences and N-fences (the principal part of each gadget~$\Gad(v)$ is an N-fence; each arc leaving the principal part of a gadget~$\Gad(v)$ is an N-fence of length $1$; the remaining arcs decompose into M-fences of length $2$). By \Cref{lem:TB=NoW},~$N_G$ must be tree-based. 
% \end{proof}

By \Cref{obs:N_GisTB} and \Cref{thm:TB=ChCover}, we use freely from now on that~$N_G$ has a cherry cover.
Before proving the main result, we require some notation and helper lemmas. 
Let~$N$ be a network and let~$\hat{N_i}$ be an N-fence of~$N$. In what follows, we shall write~$\hat{N_i} := (a^i_1,a^i_2,\ldots, a^i_{k_i})$, and we will let~$c^i_{2j-1}$ denote the child of~$\head(a^i_{2j-1})$ for~$j\in\left[\frac{k_i-1}{2}\right]$.
The first lemma states that although a tree-based network may have non-unique cherry covers, the reticulated cherry shapes that cover arcs of N-fences are fixed.

\begin{lemma} \label{lem:n_fence_unique}
    Let~$N$ be a tree-based network, and let~$\hat{N_1},\hat{N_2},\ldots, \hat{N_n}$ denote the N-fences of~$N$ of length at least 3. 
    Then every cherry cover of~$N$ contains the reticulated cherry shapes~$\{(\head(a^i_{2j-1})c^i_{2j-1}), a^i_{2j},a^i_{2j+1}\}$ for~$i\in[n]$ and~$j\in\left[\frac{k_i-1}{2}\right]$.
\end{lemma}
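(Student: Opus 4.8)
The plan is to fix an arbitrary cherry cover $P$ of $N$ and argue, one N-fence at a time, that each of the listed reticulated cherry shapes is forced to lie in $P$. Distinct N-fences are arc-disjoint, since they are blocks of the zig-zag decomposition, which partitions the non-root arcs by \Cref{thm:UniqueMaxZigZag}; moreover the claim is a conjunction over all N-fences whose conjuncts can be established independently. Hence it suffices to treat a single N-fence $\hat N := (a_1,\ldots,a_k)$ with $k\ge 3$ odd and to prove the statement for the indices $j\in[\tfrac{k-1}{2}]$.

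First I would pin down the combinatorial shape of $\hat N$. Because $\tail(a_1)$ is a reticulation (outdegree $1$), the arcs $a_1,a_2$ cannot share a tail and so share a head; since tree vertices have outdegree $2$ and reticulations indegree $2$, three consecutive arcs can share neither a common tail nor a common head, so the sharing type alternates thereafter. Thus $\head(a_{2j-1})=\head(a_{2j})$ is a reticulation, which I abbreviate $r_j$, and $\tail(a_{2j})=\tail(a_{2j+1})$ is a tree vertex for every $j$; the two incoming arcs of $r_j$ are exactly $a_{2j-1},a_{2j}$, and for $j\ge 2$ both parents $\tail(a_{2j-1}),\tail(a_{2j})$ are tree vertices, whereas $\tail(a_1)$ is a reticulation. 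I would also check that the listed set $\{r_j c_{2j-1}, a_{2j}, a_{2j+1}\}$ really is a reticulated cherry shape, with internal vertices $p_x=r_j$ and $p_y=\tail(a_{2j})$, middle arc $a_{2j}$, and endpoints $c_{2j-1}$ and $\head(a_{2j+1})$.

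The key structural observation is that the unique outgoing arc $r_j c_{2j-1}$ of the reticulation $r_j$ can only be covered as the $p_x x$-arc of a reticulated cherry shape with $p_x=r_j$: it cannot lie in a cherry shape, whose two arcs leave a common tree vertex, nor be a middle arc or a $p_y y$-arc, whose tails are tree vertices of outdegree $2$, because $r_j$ has outdegree $1$. Consequently the shape of $P$ covering $r_j c_{2j-1}$ is determined up to which parent of $r_j$ serves as $p_y$, giving exactly two candidates: Option $1$ with $p_y=\tail(a_{2j})$, which yields precisely the claimed shape $\{r_j c_{2j-1}, a_{2j}, a_{2j+1}\}$; and, for $j\ge 2$, Option $2$ with $p_y=\tail(a_{2j-1})$, which yields $\{r_j c_{2j-1}, a_{2j-1}, a_{2j-2}\}$.

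Finally I would close by induction on $j$. For $j=1$, Option $2$ is unavailable because $\tail(a_1)$ is a reticulation and cannot play the role of $p_y$, so the claimed shape is forced. For the inductive step, the hypothesis places the claimed shape for $j-1$ in $P$, and that shape already covers $a_{2j-2}$ and $a_{2j-1}$; since every arc is covered exactly once, Option $2$ for $r_j$, which would re-cover these same two arcs, is impossible, so Option $1$ — the claimed shape — is forced. The main obstacle is exactly ruling out Option $2$ for the interior reticulations: unlike $r_1$, both their parents are tree vertices, so the choice is not settled locally and must instead be propagated along the fence from its reticulation end, using the \emph{covered exactly once} property of cherry covers.
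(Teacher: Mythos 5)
Your proof is correct and follows essentially the same approach as the paper: force the shape at the reticulation end of each N-fence (where $\tail(a_1)$ being a reticulation rules out the alternative), then propagate along the fence using the fact that each arc is covered exactly once. The only cosmetic difference is that you pivot on how the outgoing arc $r_jc_{2j-1}$ of each reticulation can be covered, whereas the paper pivots on the equivalent observation that exactly one incoming arc of each reticulation must be a middle arc; your write-up also makes explicit the arc-disjointness of fences and the induction that the paper compresses into ``repeating this argument.''
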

% moved to appendix
% \begin{proof}
%     Let~$N_i = (a^i_1,a^i_2,\ldots, a^i_{k_i})$ be an N-fence of length~$k_i\ge 3$.
%     Observe that in every cherry cover, exactly one incoming arc of every reticulation is covered by a reticulated cherry shape as a middle arc (since the network is binary; for non-binary networks, this is not true in general~\cite{van2021unifying}).
%     Since~$\head(a^i_1)$ is a reticulation, one of~$a^i_1$ or~$a^i_2$ must be in a reticulated cherry shape as a middle arc. But~$\tail(a^i_1)$ is a reticulation; therefore,~$a^i_1$ must be in a middle arc of a reticulated cherry shape.
%     The other two arcs of the same reticulated cherry shapes are then fixed to be~$\head(a^i_1)c^i_1$ and~$a^i_2$. 
%     Repeating this argument for the reticulations~$\head(a^i_{2j+1})$ for~$j\in\left[\frac{k_i-1}{2}\right]$ gives the required claim for the N-fence~$N_i$; further repeating this argument for every N-fence gives the required claim.
% \end{proof}

Note that the principal part of a gadget~$\Gad(v)$ for every~$v\in V(G)$ is an N-fence. Let us denote the principal part of a gadget~$\Gad(v)$ by~$(a^v_1,a^v_2,\ldots,a^v_{15})$ for all~$v\in V(G)$. By \Cref{lem:n_fence_unique}, $a^v_i$ for~$i=2,\ldots, 15$ and~$v\in E(G)$ are covered in the same manner across all possible cherry covers of~$N_G$. Let us denote the reticulated cherry shape that contains~$a^v_i$ and~$a^v_{i+1}$ by~$R^v_{i/2}$ for even $i \in [15]$. 
\Cref{subfig:gadgets_cherry_cover,subfig:gadgets_cherry_cover_graph} show an example of the part of cherry cover auxiliary graph containing~$R^v_{i}$ and~$R^u_i$ for~$i\in[7]$, for some edge~$uv$ in $G$.
Note that the cherry shapes form a cycle.
The next lemma implies that in fact,  such a cycle exists for any edge~$uv$ in $G$.

\begin{figure}
    \centering
    \subfloat[]{\includegraphics[height=4cm]{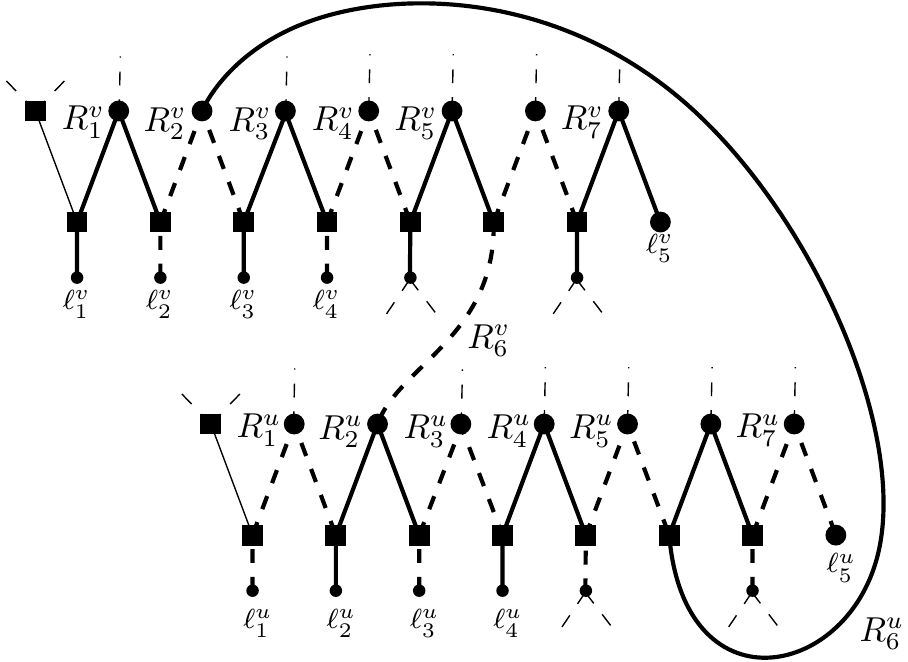} \label{subfig:gadgets_cherry_cover}}
        \subfloat[]{\includegraphics[height=4cm]{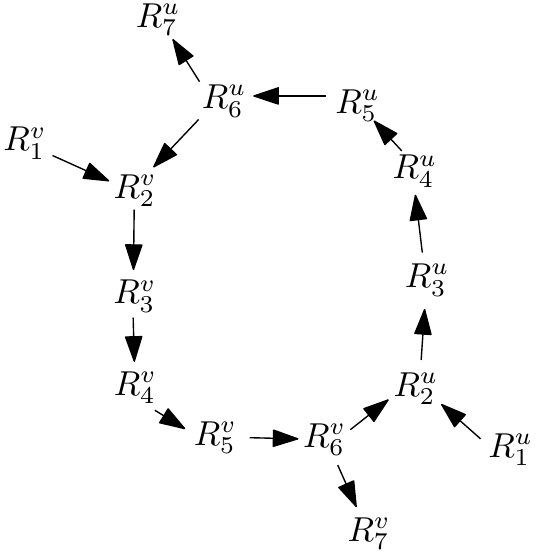}\label{subfig:gadgets_cherry_cover_graph}}
        \\
    \subfloat[]{\includegraphics[height=4cm]{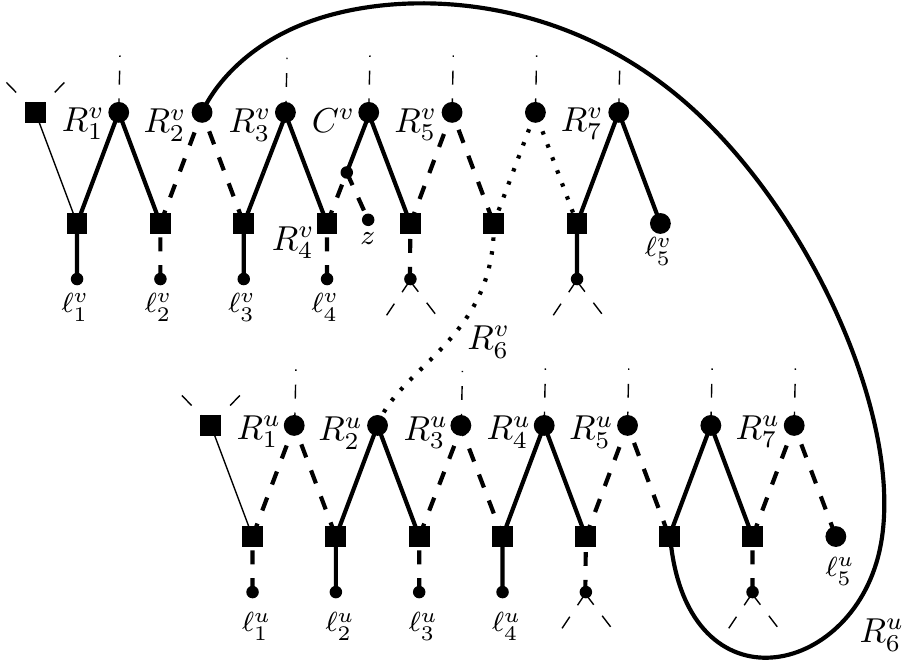}\label{subfig:gadgets_cherry_cover_leaf}}
        \subfloat[]{\includegraphics[height=4cm]{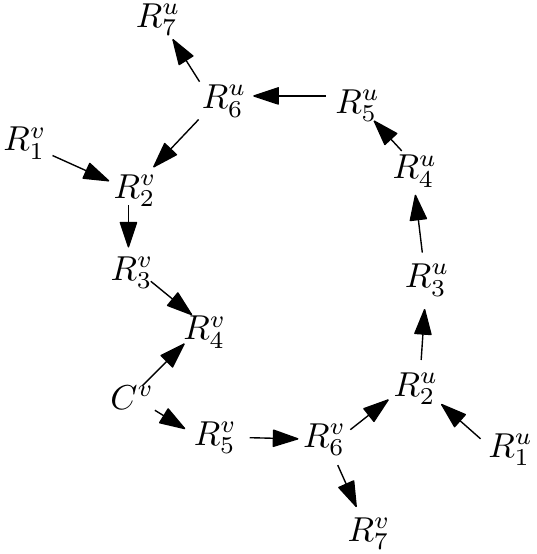}\label{subfig:gadgets_cherry_cover_leaf_graph}}
    \caption{Cherry cover of~$\Gad(v)$ and~$\Gad(u)$. In (a), the unique cherry cover of the principal part of~$\Gad(v)$ and~$\Gad(u)$ is displayed, in (b), the cherry cover auxiliary graph of (a) is given. In (c), the leaf~$z \notin X$ is added to the principal part of~$\Gad(v)$, and one possible cherry cover of the same part of the network is given. And in (d), the cherry cover auxiliary graph of (c) is given.}
    \label{fig:reduction_cherry_cover}
\end{figure}

\begin{lemma}
\label{lem:n_fence_cycle}
Let $N$ be a tree-based network and suppose that for two N-fences 
$\hat{N_u} := (a^u_1,a^u_2,\ldots, a^u_{k_u})$ and 
$\hat{N_v} := (a^v_1,a^v_2,\ldots, a^v_{k_v})$ of length at least $3$, there exist directed paths in $N$ from $\head(a^u_h)$ to $\tail(a^v_i)$ and from $\head(a^v_j)$ to $\tail(a^u_k)$, for even $h,i,j,k$ with $k < h$ and $i < j$. 
Then every cherry cover auxiliary graph of~$N$ contains a cycle.
\end{lemma}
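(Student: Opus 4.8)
The plan is to argue that the two hypothesised directed paths, together with the reticulated cherry shapes that \Cref{lem:n_fence_unique} forces inside each N-fence, trace out a closed directed walk in every cherry cover auxiliary graph of $N$; since a closed directed walk of positive length always contains a directed cycle, this proves the lemma. Throughout, fix an arbitrary cherry cover and write $R^u_m := \{(\head(a^u_{2m-1})c^u_{2m-1}),\, a^u_{2m},\, a^u_{2m+1}\}$ for $m \in \left[\frac{k_u-1}{2}\right]$, the reticulated cherry shape guaranteed by \Cref{lem:n_fence_unique}, and similarly $R^v_m$. By the N-fence indexing convention (with $\tail(a^u_1)$ a reticulation) we have $\head(a^u_{2m-1}) = \head(a^u_{2m})$ and $\tail(a^u_{2m}) = \tail(a^u_{2m+1})$, so for an even index the head $\head(a^u_{2m})$ is the reticulation internal vertex of $R^u_m$ and the tail $\tail(a^u_{2m})$ is its tree-vertex internal vertex. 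Moreover the endpoint $\head(a^u_{2m+1})$ of $R^u_m$ coincides with the reticulation internal vertex of $R^u_{m+1}$, so $R^u_m$ is directly above $R^u_{m+1}$. Hence every auxiliary graph contains the intra-fence directed paths $R^u_1 \to R^u_2 \to \cdots$ and $R^v_1 \to R^v_2 \to \cdots$; in particular, using $k < h$ and $i < j$, it contains $R^u_{k/2} \to \cdots \to R^u_{h/2}$ and $R^v_{i/2} \to \cdots \to R^v_{j/2}$, each with at least one arc.

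The technical heart is a path-transfer claim: if $p_0 \to p_1 \to \cdots \to p_\ell$ is a directed path in $N$ and $C_t$ denotes the shape covering the arc $p_{t-1}p_t$, then for each $t$ either $C_t = C_{t+1}$ or $v_{C_t}v_{C_{t+1}}$ is an auxiliary arc. Indeed, $p_t = \tail(p_tp_{t+1})$ is always an internal vertex of $C_{t+1}$, while $p_t = \head(p_{t-1}p_t)$ is an endpoint of $C_t$ unless $p_{t-1}p_t$ is the middle arc of $C_t$; in the latter case $p_t$ is the reticulation of $C_t$, whose unique outgoing arc is the ``$p_xx$'' arc of $C_t$, forcing $p_tp_{t+1}$ into $C_t$ and hence $C_{t+1}=C_t$. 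Collapsing repeats, the shapes covering the arcs of any directed path thus form a directed walk in the auxiliary graph. (Note that a downward directed path started at a reticulation never uses the root arc, so all of its arcs are covered.)

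It remains to attach these walks to the forced shapes at their endpoints. For the path from $\head(a^u_h)$ to $\tail(a^v_i)$ (with $h,i$ even), the source is the reticulation internal vertex of $R^u_{h/2}$, whose single outgoing arc is precisely the ``$p_xx$'' arc of $R^u_{h/2}$; hence the induced walk begins at $R^u_{h/2}$. The sink $\tail(a^v_i)$ is a tree vertex, so it is not the head of a middle arc and is therefore an endpoint of the shape $C_\ell$ covering the last arc; as $\tail(a^v_i)$ is the tree-vertex internal vertex of $R^v_{i/2}$, we obtain a final auxiliary arc $v_{C_\ell}v_{R^v_{i/2}}$, so the walk ends at $R^v_{i/2}$. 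Symmetrically, the path from $\head(a^v_j)$ to $\tail(a^u_k)$ induces a walk from $R^v_{j/2}$ to $R^u_{k/2}$. Concatenating these four walks,
\[
R^u_{k/2} \to \cdots \to R^u_{h/2} \to \cdots \to R^v_{i/2} \to \cdots \to R^v_{j/2} \to \cdots \to R^u_{k/2},
\]
yields a closed directed walk of positive length (the segment $R^u_{k/2} \to \cdots \to R^u_{h/2}$ alone has an arc since $k<h$), which therefore contains a directed cycle. As the shapes $R^u_\bullet$ and $R^v_\bullet$ lie in \emph{every} cherry cover by \Cref{lem:n_fence_unique}, every cherry cover auxiliary graph of $N$ contains a cycle.

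The main obstacle is the endpoint bookkeeping: one must check that the walk genuinely starts at $R^u_{h/2}$ (via the reticulation's unique out-arc) and genuinely ends at $R^v_{i/2}$, the latter requiring the extra ``directly above'' step because $\tail(a^v_i)$ is an endpoint of its covering shape but only an internal vertex of $R^v_{i/2}$. Care is also needed in the path-transfer claim to dispatch the middle-arc case correctly and to confirm that no root arc (which is covered by no shape) appears on the paths.
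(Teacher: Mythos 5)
Your proof is correct and takes essentially the same route as the paper's: it uses the shapes $R^u_{k/2}, R^u_{h/2}, R^v_{i/2}, R^v_{j/2}$ forced by \Cref{lem:n_fence_unique}, the intra-fence directly-above chains, the observation that consecutive arcs of a directed path are covered either by the same shape or by shapes with one directly above the other, and the concatenation of these four segments into a closed walk in the auxiliary graph, hence a cycle. The only difference is that you make explicit several details the paper leaves implicit (the middle-arc case in the path-transfer step, avoidance of the uncovered root arc, and the endpoint bookkeeping at $R^u_{h/2}$ and $R^v_{i/2}$), which refines rather than changes the argument.
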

% moved to appendix
% \begin{proof}
%     Let us again denote the reticulated cherry shape that contains~$a^u_h$ and~$a^u_{h+1}$ by~$R^u_{h/2}$, and similarly for~$R^v_{i/2}$,~$R^v_{j/2}$,and~$R^u_{k/2}$.
%     By \Cref{lem:n_fence_unique}, all of ~$R^u_{h/2}$,~$R^v_{i/2}$,~$R^v_{j/2}$,~$R^u_{k/2}$ appear in the cherry cover auxiliary graph. Moreover~$R^u_{k/2}$ is above~$R^u_{h/2}$, and~$R^v_{i/2}$ is above~$R^v_{j/2}$.
%     Now observe that for any consecutive arcs on the path from $\head(a^u_h)$ to $\tail(a^v_i)$, either they are part of the same reticulated cherry shape in the cherry cover, or they are part of different cherry shapes with one cherry shape directly above the other. This implies that there is a path from~$R^u_{h/2}$ to~$R^v_{i/2}$ in the cherry cover auxiliary graph. A similar argument shows that there is a path from~$R^v_{j/2}$ to~$R^u_{k/2}$. 
%     But then we have that~$R^u_{h/2}$ is above~$R^v_{i/2}$, which is above~$R^v_{j/2}$, which is above~$R^u_{k/2}$, which is above~$R^u_{h/2}$ and we have a cycle.
% \end{proof}

\markj{In order to remove all possible cycles from a possible cherry cover, it is therefore necessary to disrupt the principal part of either $\Gad(u)$ or $\Gad(v)$, for any edge~$uv$ in $G$.}

\begin{lemma}\label{lem:LORDimpliesVC}
    Let~$G$ be a 3-regular graph and let~$N_G$ be the network obtained by the reduction above. Suppose that~$A$ is a set of arcs of~$N_G$, for which adding leaves to every arc in~$A$ results in an orchard network.  
    For every edge~$uv\in E(G)$, there exists an arc~$a\in A$ that is an arc of the principal part of~$\Gad(u)$ or~$\Gad(v)$.
\end{lemma}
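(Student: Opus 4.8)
The plan is to argue by contraposition through the cherry-cover characterization of orchard networks. I would suppose, for contradiction, that some edge $uv \in E(G)$ admits no arc of $A$ in the principal part of $\Gad(u)$ or of $\Gad(v)$, and let $N'$ be the network obtained from $N_G$ by adding a leaf to every arc of $A$. By hypothesis $N'$ is orchard, so it is tree-based by \Cref{lem:NetworkClassContainment}, and by \Cref{thm:AcyclicCherryCover} it possesses an acyclic cherry cover. The aim is to contradict this by establishing the hypotheses of \Cref{lem:n_fence_cycle} for the two principal parts \emph{inside} $N'$, which forces every cherry cover auxiliary graph of $N'$ to be cyclic.

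First I would show that the principal parts $P_u, P_v$ of $\Gad(u), \Gad(v)$ survive as N-fences of length $15$ in $N'$. The key point is that no arc of $P_u \cup P_v$ lies in $A$, so none of these arcs is subdivided; adding leaves elsewhere only subdivides other arcs and appends new tree vertices and leaves. I would then check that this neither merges new arcs into these zig-zag trails nor allows them to be extended, so maximality (and hence the N-fence status) is preserved: each reticulation $r^v_i$ of the fence keeps both of its incoming arcs inside the fence, and the terminal vertices $r^v_0$ (a reticulation of out-degree $1$), $w^v_7$, and the leaf $\ell^v_5$ retain exactly the incidences that made $P_v$ maximal in $N_G$ (and symmetrically for $u$).

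Next I would verify the path hypotheses of \Cref{lem:n_fence_cycle}. Writing $P_u = (a^u_1, \ldots, a^u_{15})$ and $P_v = (a^v_1, \ldots, a^v_{15})$ as in the reduction, the connecting arcs introduced in Step~3 run from some $r^u_{\tau_u(v)} \in \{r^u_5, r^u_6, r^u_7\}$ to some $w^v_{\pi_v(u)} \in \{w^v_1, w^v_2, w^v_3\}$, and symmetrically from $\Gad(v)$ to $\Gad(u)$. Since $r^u_5, r^u_6, r^u_7$ are the heads $\head(a^u_{10}), \head(a^u_{12}), \head(a^u_{14})$ and $w^v_1, w^v_2, w^v_3$ are the tails $\tail(a^v_2), \tail(a^v_4), \tail(a^v_6)$, these arcs give directed paths from $\head(a^u_h)$ to $\tail(a^v_i)$ and from $\head(a^v_j)$ to $\tail(a^u_k)$ with even indices $h, j \in \{10, 12, 14\}$ and $i, k \in \{2, 4, 6\}$, so that $k < h$ and $i < j$. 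Subdividing arcs on a directed path leaves a directed path, so these paths persist in $N'$.

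With the two N-fences and the required directed paths in hand, \Cref{lem:n_fence_cycle} applies to $N'$ and shows that every cherry cover auxiliary graph of $N'$ contains a cycle, contradicting the acyclic cherry cover guaranteed by \Cref{thm:AcyclicCherryCover}. I expect the main obstacle to be the second paragraph: rigorously arguing that adding leaves to arcs outside $P_u \cup P_v$ cannot alter the maximal zig-zag decomposition restricted to these fences, i.e.\ that $P_u$ and $P_v$ remain genuine N-fences (and not merely zig-zag subtrails) in $N'$. The remaining index bookkeeping and the persistence of directed paths under subdivision are routine.
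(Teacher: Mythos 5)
Your proposal is correct and follows essentially the same route as the paper's own proof: contraposition, observing that the principal parts of $\Gad(u)$ and $\Gad(v)$ remain N-fences after the leaf additions, verifying the path hypotheses of \Cref{lem:n_fence_cycle} via the inter-gadget arcs, and concluding through \Cref{thm:AcyclicCherryCover}. The only difference is that you spell out the index bookkeeping ($h,j\in\{10,12,14\}$, $i,k\in\{2,4,6\}$) and the preservation of zig-zag maximality explicitly, details the paper leaves implicit.
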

% \begin{proof}
%     We prove this lemma by contraposition. Let us assume that there is an edge~$uv\in E(G)$, such that no arcs of the principal part of~$\Gad(u)$ or~$\Gad(v)$ are in~$A$.
%     We shall show that the network obtained by adding leaves to all~$a \in A$ in $N_G$ -- which we denote~$N_G+A$ -- is not orchard.
  
%     From \Cref{thm:AcyclicCherryCover} we know that $N_G$ is orchard if and only if~$N_G$ has an acyclic cherry cover.
%     We show here that~$N_G+A$ will not have an acyclic cherry cover, thereby showing that~$N_G+A$ is not orchard. 

%     As no arcs were added to the principal part of~$\Gad(u)$ or~$\Gad(v)$, these principal parts remain N-fences in $N_G+A$. Furthermore by construction $N$ has an arc from some~$\head(a_h^v)$ to~$\tail(a_i^u)$ for even $h \geq 10$ and even $i \leq 6$, and so~$N_G+A$ has a path from~$\head(a_h^v)$ to~$\tail(a_i^u)$. Similarly ~$N_G+A$ has a path from~$\head(a_j^u)$ to~$\tail(a_k^v)$ for some even $j \geq 10$ and $h\leq 6$. Then \Cref{lem:n_fence_cycle} implies that the auxiliary graph of any  cherry cover of  $N_G+A$ contains a cycle. By \Cref{thm:AcyclicCherryCover}, we have that~$N_G+A$ is not orchard.  
% \end{proof}

\markj{To complete the proof of the validity of the reduction, we show that in order to make $N_G$ orchard by leaf additions, it is sufficient (and necessary) to add a leaf~$z^v$ to an appropriate arc of~$\Gad(v)$ for every $v$ in a vertex cover~$V_{sol}$ of $G$ (see \Cref{subfig:gadgets_cherry_cover_leaf}). 
    The key idea is that this splits the principal part of $\Gad(v)$ from an N-fence into an N-fence and an M-fence, and this allows us to avoid the cycle in the cherry cover auxiliary graph (see \Cref{subfig:gadgets_cherry_cover_leaf_graph}).}

\begin{lemma}\label{lem:VCiffLORD}
    Let~$G$ be a 3-regular graph and let~$N_G$ be the network obtained by the reduction described above. Then $G$ has a minimum vertex cover of size~$k$ if and only if~$L_{\cOr}(N_G) = k$.
    % $(G,k)$ is a {\sc YES}-instance of \dpVC{} if and only if~$(N_G,k)$ is a {\sc YES}-instance of \dpLD{}.
\end{lemma}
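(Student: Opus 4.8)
The plan is to prove the biconditional $G$ has a minimum vertex cover of size $k$ if and only if $L_{\cOr}(N_G) = k$ by establishing the two inequalities $L_{\cOr}(N_G) \le k$ and $L_{\cOr}(N_G) \ge k$ separately, exploiting the structural results already assembled. The backbone is \Cref{thm:AcyclicCherryCover}: since $N_G$ is tree-based (\Cref{obs:N_GisTB}), it always admits some cherry cover, and making $N_G$ orchard is equivalent to being able to choose a leaf-addition set after which an \emph{acyclic} cherry cover exists. \Cref{lem:n_fence_unique} pins down that the reticulated cherry shapes covering an N-fence are forced, so the only freedom in a cherry cover of $N_G$ lives outside the principal parts; and \Cref{lem:n_fence_cycle} guarantees that, before any leaves are added, each edge $uv \in E(G)$ forces a cycle in every cherry cover auxiliary graph (the directed paths connecting $\Gad(u)$ and $\Gad(v)$ arise precisely from the linking arcs added in step~3 of the reduction, which run between the $r_5,r_6,r_7$ vertices of one gadget and the $w_1,w_2,w_3$ vertices of the other).

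For the easy direction, $L_{\cOr}(N_G) \le k$, I would take a vertex cover $V_{sol}$ of $G$ with $|V_{sol}| = k$ and, for each $v \in V_{sol}$, add a single leaf $z^v$ to an appropriate arc of the principal part of $\Gad(v)$, as indicated in \Cref{subfig:gadgets_cherry_cover_leaf}. The key claim to verify is that this leaf addition splits the principal N-fence of $\Gad(v)$ into a shorter N-fence plus an M-fence, so that the reticulated cherry shapes forced by \Cref{lem:n_fence_unique} no longer chain all the way across the gadget; concretely this breaks the forced cycle of \Cref{lem:n_fence_cycle} for \emph{every} edge $uv$, because $V_{sol}$ is a cover and so at least one endpoint of each edge has had its gadget disrupted. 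I would then exhibit an explicit acyclic cherry cover of the resulting network (guided by \Cref{subfig:gadgets_cherry_cover_leaf_graph}), checking that no new cycles are introduced among the shapes outside the principal parts, and invoke \Cref{thm:AcyclicCherryCover} to conclude the modified network is orchard. Hence $L_{\cOr}(N_G) \le |V_{sol}| = k$.

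For the converse direction, $L_{\cOr}(N_G) \ge k$, suppose $A$ is a set of arcs whose leaf additions make $N_G$ orchard, with $|A| = L_{\cOr}(N_G)$. \Cref{lem:LORDimpliesVC} already tells me that for every edge $uv \in E(G)$ there is an arc $a \in A$ lying in the principal part of $\Gad(u)$ or of $\Gad(v)$. I would use this to define a vertex set $V_{sol} \subseteq V(G)$ by selecting, for each such arc $a \in A$, the vertex $v$ whose gadget's principal part contains $a$ (breaking ties arbitrarily if an arc somehow qualifies for two gadgets, though by construction each principal arc belongs to a unique gadget). Then $|V_{sol}| \le |A|$, and \Cref{lem:LORDimpliesVC} guarantees $V_{sol}$ meets the principal part of at least one endpoint of every edge, i.e.\ $V_{sol}$ is a vertex cover of $G$. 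Therefore the minimum vertex cover size of $G$ is at most $|V_{sol}| \le |A| = L_{\cOr}(N_G)$. Combining with the first direction yields equality, so if $G$ has a minimum vertex cover of size exactly $k$ then $L_{\cOr}(N_G) = k$.

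I expect the main obstacle to be the easy-sounding direction: explicitly constructing the acyclic cherry cover after the leaf additions and rigorously arguing its auxiliary graph is acyclic. While \Cref{lem:n_fence_cycle} localises \emph{where} cycles come from, and the forced shapes of \Cref{lem:n_fence_unique} constrain the principal parts, I still need to argue that breaking one gadget per edge removes \emph{all} cycles globally — including any cycles that might thread through the connector vertices $\rho, s_i$ of step~2 or through multiple gadgets simultaneously — rather than merely the two-gadget cycles of \Cref{lem:n_fence_cycle}. The cleanest way to handle this is to give a concrete acyclic orientation/labelling of the whole auxiliary graph for the modified network (for instance by a topological argument tracking the new M-fence created in each disrupted gadget), so that the upper bound is established by a single global certificate rather than by a local cycle-by-cycle removal argument. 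A subtle point worth flagging is \Cref{thm:OnlyAddToRetArcs}, which justifies restricting all leaf additions to reticulation arcs; I would cite it to ensure that the optimal $A$ in the lower-bound argument may be assumed to place leaves on reticulation arcs, matching the placement used in the upper bound and keeping the two directions symmetric.
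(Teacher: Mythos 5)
Your lower-bound direction is exactly the paper's: use \Cref{lem:LORDimpliesVC} to map each arc of the solution set to the gadget whose principal part contains it, observe that the resulting vertex set is a cover of size at most $|A|$, and combine the two inequalities. Your upper-bound direction also starts the same way as the paper (one leaf added to a principal-part arc of $\Gad(v)$ for each $v$ in the cover, splitting that N-fence into an N-fence plus an M-fence). The problem is what comes next: the certificate that the modified network is orchard is never actually produced. You propose to exhibit an acyclic cherry cover and invoke \Cref{thm:AcyclicCherryCover}, and you yourself flag the verification of \emph{global} acyclicity of the auxiliary graph as ``the main obstacle'' --- cycles could in principle thread through the connector vertices $\rho, s_1,\dots,s_{g-1}$ or through several gadgets at once, and \Cref{lem:n_fence_cycle} only tells you where cycles \emph{must} appear, not that none remain after the leaf additions. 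Your fallback suggestion (a ``topological argument tracking the new M-fence'') is precisely the missing work, not a proof of it. As it stands, the direction ``vertex cover $\Rightarrow L_{\cOr}(N_G)\le k$'' is a genuine gap.

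The paper closes exactly this gap by abandoning cherry covers for this direction and using the other characterization of orchard networks, \Cref{thm:OrchIFFHori}: it writes down an explicit HGT-consistent labelling $t$ of the entire modified network --- negative labels on $\rho$, the $s_i$'s and the $m_i^v$'s, one block of numeric values ($12,13,14,15$ and $1,2,3,4$) for the principal part of $\Gad(v)$ when $v\in V_{sol}$ (the added leaf's parent $q^v$ absorbs the label $15$, letting the chain of equalities restart), a different block ($5,\dots,11$) when $v\notin V_{sol}$, and $t(\ell)=t(p)+1$ for each leaf. The point of this switch is that an HGT-consistent labelling is checkable by purely \emph{local} conditions (inequalities along each arc, exactly one horizontal incoming arc per reticulation), so no global acyclicity argument is ever needed; the numeric labels are themselves the global certificate you were looking for. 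To repair your proof you would either carry out such a labelling, or genuinely construct the cherry cover and prove its auxiliary graph acyclic by exhibiting a topological order of all shapes --- the first option is what the paper does and is considerably less painful. (Minor note: your appeal to \Cref{thm:OnlyAddToRetArcs} in the lower bound is harmless but unnecessary, since \Cref{lem:LORDimpliesVC} applies to arbitrary arc sets.)
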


\begin{theorem}\label{thm:L_Or=Hard}
    Let~$N$ be a network. The decision problem \dpLD{} is NP-complete. Computing $L_\cOr(N)$ is NP-hard. 
\end{theorem}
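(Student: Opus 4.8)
The plan is to establish NP-completeness by combining the lemmas already developed in the section. The theorem has two parts: membership in NP, and NP-hardness via the reduction from \dpVC{}.

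For membership in NP, I would argue that \dpLD{} is in NP by exhibiting a polynomial-size certificate. Given a network $N$ and a budget $k$, a ``yes''-certificate consists of a set of at most $k$ arcs (together with leaf labels) to which leaves are added. Verifying the certificate requires only checking that the resulting network $N'$ is orchard, which can be done in polynomial time---for instance, by repeatedly searching for a cherry or reticulated cherry to reduce (the order does not matter, as noted after the definition of orchard networks), or by checking for an acyclic cherry cover via \Cref{thm:AcyclicCherryCover}. Since each leaf addition increases the size of the network by a constant and $k$ is at most linear in the instance size for any meaningful instance, the certificate and its verification are polynomial.

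For NP-hardness, I would invoke the reduction $(G,k)\mapsto(N_G,k)$ described above together with \Cref{lem:VCiffLORD}. First I would note that the construction of $N_G$ from $(G,k)$ can be carried out in polynomial time: each gadget $\Gad(v)$ has a constant number of vertices and arcs, there are $|V(G)|$ gadgets, and the connecting arcs (Steps 2 and 3) add only a linear number of further arcs. Hence $N_G$ has size linear in $|G|$ and is computable in polynomial time. Then \Cref{lem:VCiffLORD} states precisely that $G$ has a minimum vertex cover of size $k$ if and only if $L_{\cOr}(N_G)=k$. Since \dpVC{} is NP-complete, this polynomial-time many-one reduction establishes that \dpLD{} is NP-hard, and combined with membership in NP, it is NP-complete. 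The NP-hardness of computing $L_\cOr(N)$ follows immediately, since any algorithm computing $L_\cOr(N)$ solves the decision problem.

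The main obstacle is really not in this concluding theorem itself---all the difficult combinatorial work is encapsulated in \Cref{lem:VCiffLORD} and its supporting lemmas (\Cref{lem:n_fence_unique}, \Cref{lem:n_fence_cycle}, and \Cref{lem:LORDimpliesVC}), which handle the delicate cherry-cover cycle arguments. The only genuinely new point to address at this stage is the membership in NP, where I would need to be careful that a polynomial certificate exists; the subtlety is that one must bound the number of leaf additions needed so that $k$ can be taken polynomially bounded (in fact $L_\cOr(N)$ is bounded by the number of reticulations, so $k$ larger than this is trivially a ``yes''-instance, keeping certificates small). With that observation, verifying orchard-ness of $N_G$ plus the certificate leaves in polynomial time completes the proof.
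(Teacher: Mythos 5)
Your proposal is correct and takes essentially the same approach as the paper: the same certificate (the set of at most $k$ arcs receiving leaves) verified by a polynomial-time orchard check via cherry-picking reductions, the same observation that the reduction $(G,k)\mapsto(N_G,k)$ is polynomial, and the same invocation of \Cref{lem:VCiffLORD} to conclude NP-completeness and hence NP-hardness of computing $L_\cOr(N)$. Your extra remark bounding the certificate size by the number of reticulations is a harmless refinement the paper leaves implicit.
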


\begin{proof}
    Suppose we are given a set of arcs~$A_{sol}$ of~$N_G$ of size at most~$k$.
    Upon adding leaves to every arc in~$A_{sol}$, we may check that the resulting network is orchard in polynomial time (see Section 6 of \cite{janssen2021cherry}).
    This implies that~\dpLD{} is in NP. The reduction from \dpVC{} to \dpLD{} outlined at the start of the section takes polynomial time, since we add a finite number of vertices and arcs for every vertex in the \dpVC{} instance. The NP-completeness of \dpLD{} follows from the equivalence of the two problems shown by \Cref{lem:VCiffLORD}. The optimization problem of \dpLD{}, i.e., the one of computing~$L_\cOr(N)$ is therefore NP-hard.
\end{proof}

\section{Upper Bound}\label{sec:Bound}

In the previous section we showed that computing~$L_\cOr(N)$ is NP-hard.
Here, we provide a sharp upper bound for~$L_{\cOr}(N)$.
We call a reticulation \emph{highest} if it has no reticulation ancestors.

\begin{lemma}
\label{lem:RetHasLeafSibling}
    Let~$N$ be a network. Suppose there is a highest reticulation~$r$ such that all other reticulations have a leaf sibling. Then~$N$ is orchard.
\end{lemma}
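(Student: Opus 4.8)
The plan is to argue directly from the recursive (cherry-picking) definition of orchard networks, by induction on the number of vertices of $N$. The engine of the induction is the elementary \emph{lifting} fact that if $N'$ is obtained from $N$ by reducing a cherry or a reticulated cherry and $N'$ is orchard, then $N$ is orchard: prepending the reduced pair to a cherry-picking sequence for $N'$ yields one for $N$. For the base case I would treat any $N$ with at most one reticulation. Such a network has no omnians (a tree vertex would need two arcs to the single reticulation, which are parallel and hence removed by cleaning up, and a reticulation cannot be its own child), so it is tree-child and therefore orchard by \Cref{lem:NetworkClassContainment}. Since reducing a cherry or a reticulated cherry strictly decreases the vertex count, it then suffices to exhibit, whenever $N$ has at least two reticulations, a reducible pair whose reduction preserves the hypothesis.

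For the inductive step I would first locate a \emph{lowest} reticulation $r'$ (one with no reticulation strictly below it) with $r'\neq r$. Such an $r'$ exists precisely because $r$ is highest: if $r$ were the only lowest reticulation, then following a downward chain of reticulations from any other reticulation would terminate at $r$, making that reticulation an ancestor of $r$ and contradicting that $r$ has no reticulation ancestors. Having fixed such an $r'$, I split on its child $c$. Because $r'$ is lowest, every vertex strictly below $r'$ has indegree $1$, so the descendants of $c$ form a pendant binary subtree $T_c$. If $c$ is a tree vertex, then $T_c$ has at least two leaves and hence a cherry, which I would reduce. If $c$ is a leaf $x$, then since $r'\neq r$ the reticulation $r'$ has a leaf sibling $y$ by hypothesis, so $(x,y)$ is a reticulated cherry, which I would reduce.

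The crux is verifying that the reduced network $N'$ still satisfies the hypothesis with the same distinguished reticulation $r$, after which induction and the lifting fact conclude the proof. When reducing a cherry inside $T_c$, no reticulation and no reticulation's leaf sibling is disturbed: a leaf sibling of a reticulation $r''$ sits at a parent of $r''$, and no reticulation lies strictly below the lowest reticulation $r'$, so all such siblings are disjoint from $T_c$; hence $r$ stays highest and every other reticulation keeps its leaf sibling. When reducing the reticulated cherry $(x,y)$, no leaf is deleted at all — only the arc into $r'$ is removed, and $r'$ together with the now-suppressed parent of $y$ disappears — so the reticulation count drops by one while every surviving reticulation retains its leaf sibling, and suppression cannot introduce a reticulation ancestor of $r$, so $r$ remains highest. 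I expect the main obstacle to be exactly these invariance checks, in particular ruling out that the local surgery at $r'$ could destroy the leaf sibling of some unrelated reticulation; the key simplification is that a reticulated-cherry reduction removes only an internal arc and never a leaf, so no sibling can be lost there.
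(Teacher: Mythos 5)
Your proof is correct and takes essentially the same route as the paper's: induct, dispose of the one-reticulation base case via tree-child-ness, pick a lowest reticulation $r'\neq r$, reduce cherries below it until its child is a leaf, and then reduce the reticulated cherry formed with $r'$'s leaf sibling. The differences are only in granularity---you induct on vertex count rather than reticulation count, and you spell out the existence of $r'$ and the invariance checks that the paper leaves implicit.
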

\begin{proof}
    We prove the lemma by induction on the number of reticulations~$k$. For the base case, observe that a network with one reticulation is tree-child since it has no omnians. A tree-child network is orchard~\cite{janssen2021cherry}, and so this network must be orchard.

    Suppose now that we have proven the lemma for all networks with fewer than~$k$ reticulations, where~$k>1$. 
    Let~$N$ be a network with reticulation set~$R$ where~$|R| = k$, and suppose there exists a highest reticulation~$r$ in~$N$ such that all other reticulations have a leaf sibling.
    Let~$r$ denote the highest reticulation as specified in the statement of the lemma.
    Choose a lowest reticulation~$r'\in R\setminus \{r\}$. By assumption,~$r'$ has a leaf sibling~$c$.
    Every vertex below~$r'$ must be tree vertices and leaves. Reduce cherries until the child~$x$ of~$r'$ is a leaf. 
    Then~$(x,c)$ is a reticulated cherry; the network~$N'$ obtained by reducing this reticulated cherry has $k-1$ reticulations and has a highest reticulation~$r$ such that all other reticulations have a leaf sibling.
    By induction hypothesis,~$N'$ must be orchard. Since a sequence of cherry reductions can be applied to~$N$ to obtain~$N'$, the network~$N$ must also be orchard.
\end{proof}

\begin{theorem}
\label{thm:LORUpperBound}
    Let $N$ be a network, and let~$r(N)$ denote the number of reticulations. Then~$L_\cOr(N)=0$ if~$N$ is a tree, and otherwise, $L_{\cOr}(N) \leq r(N) - 1$, where the bound is sharp.
\end{theorem}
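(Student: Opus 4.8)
The plan is to prove the upper bound $L_{\cOr}(N) \le r(N)-1$ by leveraging \Cref{lem:RetHasLeafSibling}, which tells us that a network is orchard as soon as there exists a highest reticulation without a leaf sibling while every \emph{other} reticulation has a leaf sibling. So the goal becomes: by adding leaves, force all but one (highest) reticulation to have a leaf sibling, while spending only $r(N)-1$ leaf additions.

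First I would handle the trivial case: if $N$ is a tree then $r(N)=0$, $N$ is already orchard (trees are tree-child, hence orchard by \Cref{lem:NetworkClassContainment}), so $L_{\cOr}(N)=0$. Now assume $r(N)\ge 1$ and fix a highest reticulation $r$ (one exists since the set of reticulations is finite and the network is acyclic, so there is a reticulation with no reticulation strictly above it). For every one of the remaining $r(N)-1$ reticulations $r'$, the idea is to create a leaf sibling for $r'$ by a single leaf addition. Concretely, $r'$ has indegree $2$; take one incoming reticulation arc $uv$ with $v=r'$, and add a leaf $x$ to this arc, subdividing it by a new tree vertex $w$ and attaching $x$ to $w$. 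After this operation $w$ is a parent of $r'$ and also of the new leaf $x$, so $r'$ and $x$ become siblings — that is, $r'$ now has a leaf sibling. By \Cref{thm:OnlyAddToRetArcs} we are free to add leaves only to reticulation arcs, which is exactly what this does.

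The key point to check is that these $r(N)-1$ leaf additions do not create new reticulations or otherwise change the reticulation count in a way that breaks the induction with \Cref{lem:RetHasLeafSibling}. Subdividing a reticulation arc by $w$ and attaching a pendant leaf $x$ makes $w$ a tree vertex (indegree $1$, outdegree $2$) and adds a leaf; it does not turn any vertex into a reticulation, so the reticulation set is unchanged and $r$ remains a highest reticulation in the modified network. Thus after all $r(N)-1$ additions, we have a network $N'$ with the same reticulations, in which every reticulation other than $r$ has a leaf sibling. Applying \Cref{lem:RetHasLeafSibling} to $N'$ shows $N'$ is orchard, and since we used $r(N)-1$ leaf additions, $L_{\cOr}(N)\le r(N)-1$.

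Finally, for sharpness I would exhibit a family of networks attaining the bound. The natural candidate is a ``stacked'' or ``caterpillar''-style network where the reticulations are arranged so that no cherry or reticulated cherry can be reduced until leaves are added — for instance a network whose reticulation arcs form a single long chain (a structure analogous to a long N-fence, as exploited in the hardness reduction) so that at most one reticulation can be ``spared'' and each of the other $r(N)-1$ genuinely requires its own leaf addition. The main obstacle in the write-up will be the sharpness direction: I would need to argue a matching \emph{lower} bound $L_{\cOr}(N)\ge r(N)-1$ for the chosen example, which requires showing that fewer than $r(N)-1$ additions leave some obstruction to being orchard (e.g.\ a cycle in every cherry cover auxiliary graph, via \Cref{thm:AcyclicCherryCover}, or an omnian-like blockage preventing any cherry-picking sequence). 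Establishing that lower bound rigorously — rather than the easy upper-bound construction — is the part that demands the most care.
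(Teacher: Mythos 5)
Your proposal matches the paper's proof essentially step for step: the same trivial tree case, the same construction of fixing a highest reticulation $r$ and adding a leaf to one incoming arc of each of the other $r(N)-1$ reticulations, and the same appeal to \Cref{lem:RetHasLeafSibling} to conclude the resulting network is orchard. For sharpness the paper likewise exhibits a stacked-reticulation example (its Figure with $r(N)=5$) and justifies the matching lower bound only informally (each non-highest reticulation needs its own leaf addition before it can be reduced), so your candidate family and your honest flagging of the lower-bound work are at the same level of rigor as the paper itself.
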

\begin{proof}
    If~$N$ is a tree, then it is orchard, and so~$L_\cOr(N) = 0$.
    So suppose~$r(N)>0$. Let $r$ be a highest reticulation of~$N$, and for every other reticulation, arbitrarily choose one incoming reticulation arc.
    Add a leaf to each of these reticulation arcs. By \Cref{lem:RetHasLeafSibling}, the resulting network must be orchard.
    We have added a leaf for all but one reticulation in~$N$. It follows that~$L_\cOr(N) \le r(N) - 1$.
    The network in \Cref{fig:Lor_bound} shows that this upper bound is sharp.
\end{proof}

\begin{figure}
    \centering
    \includegraphics[width=0.22\columnwidth]{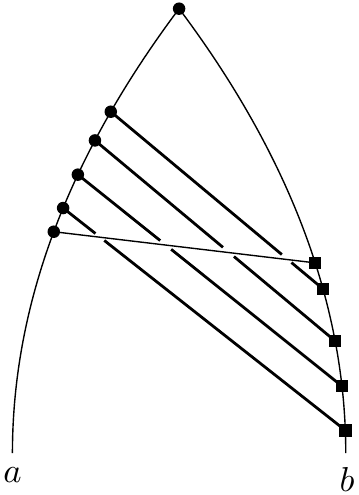}
    \caption{A network~$N$ on two leaves~$\{a,b\}$ with~$r(N) = 5$ reticulations. Observe that $L_{\cOr}(N) = r(N) - 1 = 4$, since the highest reticulation cannot be reduced by cherry picking unless the reticulations below it are first reduced. For each non-highest reticulation, we must add a leaf to one of its incoming arcs to reduce it, which leads to $L_{\cOr}(N) = 4$. Note that this construction can be extended for any~$k$ reticulations. %There are 4 non-highest reticulations in the network, each of which contributes to the number of leaf additions. 
    }
    \label{fig:Lor_bound}
\end{figure}

\section{MILP Formulation}
% leaf-addition distance ($L_{\cOr}$)}
\label{sec:ILP}

To model the problem of computing the leaf addition proximity measure as a MILP, we reformulate the measure in terms of non-temporal labellings.

\subsection{Vertical Arcs into Reticulations}\label{subsec:ProximityMeasures_H}

By \Cref{thm:OrchIFFHori}, every orchard network can be viewed as a network 
with a base tree where each of the linking arcs are horizontal. Recall that in 
terms of non-temporal labellings, this means that there exists a labelling wherein every reticulation has exactly one 
incoming reticulation arc that is horizontal. Following this definition, we introduce
a second orchard proximity measure. Given a non-temporal labelling for a network~$N$, let us use \emph{inrets} to refer to reticulations of~$N$ with only vertical incoming arcs. 
Let~$V_\cOr(N)$ denote the minimum number of inrets over all possible non-temporal
labellings. 

\begin{observation}\label{obs:NoInret=HGT}
    Let~$N$ be a network. A network admits an HGT-consistent labelling if and only if~$V_{\cOr}(N) = 0$. In other words, a network is orchard if and only if~$V_{\cOr}(N) = 0$. 
\end{observation}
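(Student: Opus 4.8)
The statement is essentially a definition-chase, so the plan is to verify the two displayed equivalences in turn; the only genuine content is the way the third clause in the definition of a non-temporal labelling interacts with the definition of an inret.

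First I would prove the biconditional that $N$ admits an HGT-consistent labelling if and only if $V_{\cOr}(N)=0$. For the forward direction, suppose $t$ is an HGT-consistent labelling. By definition every reticulation is incident to exactly one incoming horizontal arc, so in particular no reticulation has only vertical incoming arcs; hence $t$ produces zero inrets. Since the number of inrets is a nonnegative integer and $V_{\cOr}(N)$ is by definition its minimum over all non-temporal labellings, this forces $V_{\cOr}(N)=0$.

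For the reverse direction, suppose $V_{\cOr}(N)=0$ and let $t$ be a non-temporal labelling attaining zero inrets. This is the one spot that deserves care: a reticulation has indegree $2$, and both of its incoming arcs are reticulation arcs, which by the definition of horizontal arcs are the only arcs that may be horizontal. Having no inrets means that under $t$ every reticulation has \emph{at least one} incoming horizontal arc, while the third condition in the definition of a non-temporal labelling guarantees that each reticulation has \emph{at most one} incoming horizontal arc. Combining the two, every reticulation has exactly one incoming horizontal arc, which is precisely the statement that $t$ is HGT-consistent.

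The remaining claim, that $N$ is orchard if and only if $V_{\cOr}(N)=0$, then follows immediately by chaining the equivalence just established with \Cref{thm:OrchIFFHori}, which states that $N$ is orchard exactly when it admits an HGT-consistent labelling. I do not expect any real obstacle here; the only subtlety worth flagging in the write-up is the upgrade from ``at least one horizontal incoming arc'' to ``exactly one,'' which is supplied by the at-most-one clause of the non-temporal labelling definition.
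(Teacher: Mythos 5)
Your proof is correct and matches the paper's intended reasoning: the paper states this as an observation with no written proof, treating it as an immediate consequence of the definitions, and your definition-chase (HGT-consistent labellings are non-temporal labellings with zero inrets, the at-most-one clause upgrades ``at least one'' to ``exactly one,'' and the orchard equivalence follows from \Cref{thm:OrchIFFHori}) is exactly that argument. The subtlety you flag is the right one and is handled correctly.
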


In particular, we show a stronger result that equates the two proximity measures.

\begin{lemma}\label{lem:L=H}
Let~$N$ be a network. Then $L_{\cOr}(N) = V_{\cOr}(N)$.
\end{lemma}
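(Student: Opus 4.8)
The plan is to prove the equality $L_{\cOr}(N) = V_{\cOr}(N)$ by establishing the two inequalities separately, using the characterizations of orchard networks in terms of HGT-consistent labellings (\Cref{thm:OrchIFFHori}) and \Cref{obs:NoInret=HGT}, which already tells us the two measures agree precisely when they are both zero. The key conceptual link is that adding a leaf to a reticulation arc (by \Cref{thm:OnlyAddToRetArcs} we may restrict to reticulation arcs) is exactly the operation that lets a previously vertical incoming arc of a reticulation become ``horizontal'' once we relabel, thereby removing an inret; conversely, the inrets that remain under an optimal labelling are precisely the reticulations that are ``forced'' to be non-horizontal and hence must be fixed by a leaf addition.

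\textbf{Direction $L_{\cOr}(N) \le V_{\cOr}(N)$.} I would start from a non-temporal labelling $t$ of $N$ achieving the minimum number of inrets, say $V_{\cOr}(N) = m$. For each of the $m$ inrets $r$, both incoming reticulation arcs are vertical. The idea is to add a single leaf to one incoming arc of each inret and show the result is orchard; by \Cref{obs:NoInret=HGT} it suffices to exhibit an HGT-consistent labelling on the augmented network. When a leaf is added to an arc $uv$ (subdividing it by a new vertex $w$ with a pendant leaf), the new vertex $w$ is a tree vertex and we gain freedom to relabel: the crucial observation is that the subdivided arc $wv$ into the reticulation $r=v$ can now be made horizontal (set $t(w)=t(r)$) while the leaf $x$ hanging off $w$ receives a strictly larger label so that the second non-temporal labelling condition (each internal vertex has a child with a strictly greater label) is still satisfied at $w$. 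I would need to argue that performing this simultaneously for all $m$ inrets yields a globally valid non-temporal labelling with no inrets, hence orchard; this shows $N$ can be made orchard with $m = V_{\cOr}(N)$ leaf additions.

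\textbf{Direction $V_{\cOr}(N) \le L_{\cOr}(N)$.} Here I would take an optimal set of leaf additions of size $L_{\cOr}(N) = \ell$ producing an orchard network $N'$; by \Cref{thm:OrchIFFHori}, $N'$ admits an HGT-consistent labelling $t'$. The plan is to restrict $t'$ back to the vertices of the original $N$ and argue that it is a valid non-temporal labelling of $N$ whose only inrets are (a subset of) the reticulations whose incoming arc was subdivided. Each added leaf sits on one subdivided arc, so at most $\ell$ reticulations have an incoming arc that was subdivided; every reticulation whose incoming arcs were \emph{not} touched by a subdivision keeps its horizontal incoming arc under the restricted labelling and so is not an inret. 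This bounds the number of inrets of $N$ under this induced labelling by $\ell$, giving $V_{\cOr}(N) \le \ell = L_{\cOr}(N)$. The mild technical care is in checking that contracting the subdivision vertices back does not violate the non-temporal labelling axioms for $N$, and that a horizontal incoming arc of a reticulation in $N'$ corresponds to a horizontal incoming arc in $N$ whenever that arc was not the subdivided one.

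\textbf{Main obstacle.} I expect the delicate part to be the bookkeeping in the $V_{\cOr}(N)\le L_{\cOr}(N)$ direction: when a leaf is added to a reticulation arc, the subdivision inserts a tree vertex on one of $r$'s two incoming arcs, and I must verify that restricting the HGT-consistent labelling of $N'$ really induces a legitimate non-temporal labelling of $N$ (in particular that the ``at most one equality at a reticulation'' condition and the strict-increase-to-some-child condition survive the contraction), and that each leaf addition can ``spoil'' the horizontality of at most one reticulation. Conversely, in the forward direction the subtlety is ensuring that converting $m$ vertical arcs to horizontal simultaneously does not create any inconsistency (e.g.\ a reticulation ending up with \emph{two} horizontal incoming arcs, or an internal vertex losing its strictly-larger child); handling these local checks carefully, while invoking \Cref{thm:OnlyAddToRetArcs} to keep all additions on reticulation arcs, is where the real work lies.
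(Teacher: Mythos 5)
Your proposal is correct and follows essentially the same route as the paper's proof: both directions use the identical constructions (adding a leaf to one incoming arc of each inret and relabelling the subdivision vertex to make the new arc horizontal, and conversely restricting an HGT-consistent labelling of the augmented orchard network back to $N$ via \Cref{thm:OnlyAddToRetArcs}, losing at most one horizontal arc per added leaf). The local consistency checks you flag as the remaining work are exactly the ones the paper carries out.
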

\begin{proof}

Suppose first that we have a network~$N$ with some non-temporal labelling~$t : V(N)\rightarrow \mathbb{R}$ which gives
rise to~$h$ inrets. For every inret~$r$ with parents~$u$ and~$v$, we add a 
leaf~$x$ to the arc~$ur$ (this addition is done without loss of generality; the argument also follows by adding the leaf to~$vr$). Since~$r$ is
an inret, we must have~$t(u)< t(r)$ and~$t(v)<t(r)$. Letting~$p_x$ denote the parent
of~$x$, we label~$t(p_x) := t(r)$ and~$t(x) := t(p_x) + 1$. This ensures that the 
extension of the map~$t$ that includes~$x$ and~$p_x$ is a non-temporal labelling for~$N+x$. 
Observe that~$r$ is no longer an inret in~$N+x$, since the arc~$p_xr$ is horizontal. 
Therefore, a leaf addition to an incoming arc of an inret can reduce the number of inrets by exactly one. By repeating this procedure for every inret, it follows that~$L_{\cOr}(N) \le V_{\cOr}(N)$.
\medskip

To show the other direction, suppose we can add~$\ell$ leaves to~$N$ to make it orchard. By \Cref{thm:OnlyAddToRetArcs}, we may assume all such leaves are added to reticulation arcs in the set~$\{e_1,\ldots, e_\ell\}$. 
The resulting network~$N'$ has an HGT-consistent labelling $t:V(N') \rightarrow \mathbb{R}$ by \Cref{thm:OrchIFFHori}. 

We claim that the labelling~$t|_{V(N)}$ restricted to~$N$ is a non-temporal labelling, and that under~$t|_{V(N)}$, the number of inrets is at most~$\ell$. 
Suppose that a leaf~$x_i$ was added to the reticulation arc~$e_i = u_ir_i$. Let~$p_i$ denote the parent of~$x_i$ in the network~$N'$.
By definition of HGT-consistent labellings, we must have that~$t(u_i) < t(r_i)$, since~$u_ip_ir_i$ is a path in~$N'$. 
Therefore, restricting the labelling to the network obtained from~$N'$ by removing the leaf~$x_i$ is non-temporal.
Furthermore, if~$v_i$ is the parent of~$r_i$ that is not~$u_i$, we have that one of~$v_ir_i$ or~$p_ir_i$ must be horizontal in~$N'$.
If~$v_ir_i$ was horizontal, then~$r_i$ still has a horizontal incoming arc upon removing~$x_i$, and the number of inrets does not change. On the other hand, if~$p_ir_i$ was horizontal, then~$v_ir_i$ must have been a vertical arc. Upon deleting~$x_i$, the reticulation~$r_i$ becomes an inret as its other incoming arc~$u_ir_i$ is also vertical.
Since leaf deletions are local operations, deleting a leaf increases the number of inrets by at most one. 
By repeating this for each reticulation arc~$e_i$ for~$i\in[\ell]$, it follows that $N$ contains at most~$\ell$ inrets, and therefore~$V_\cOr(N) \le L_\cOr(N)$.
\end{proof}

\subsection{MILP Formulation}
By \Cref{lem:L=H} we have that $L_{\cOr}(N) = V_{\cOr}(N)$. In this section, we introduce a MILP formulation to obtain $V_{\cOr}(N)$, and therefore also $L_{\cOr}(N)$. This is done by searching for a non-temporal labelling of networks in which the number of vertical arcs is minimized. 

Let $N$ be a given network with vertex set~$V$ and arc set~$A$. Let~$R$ denote the set of reticulations of~$N$. 
% We define a binary decision variable for each arc and reticulation; we define an additional decision variable for each vertex. 
We define the decision variable $l_v$ to be the non-temporal label of the vertex $v \in V$. A tree arc and a vertical linking arc $uv$ have the property that $l_u < l_v$. We define $x_a$ to be one if arc $a \in A$ is vertical and zero otherwise. 
We define $h_v$ to be one if $v \in R$ is a reticulation with only incoming vertical arcs and zero otherwise.  
Let~$v\in V$ be a vertex of~$N$. In what follows, let $P_v \subset V$ be the set of parent nodes of $v$, $C_v \subset V$ the set of children nodes of $v$, and $X$ the set of leaves. Let~$\rho$ be the root of~$N$. Then, the MILP formulation is as follows: 
\begin{align}
    \min_{x, h, l} \quad   & \sum_{v \in R} h_v \nonumber \\
    \text{s.t.} \quad   & \sum_{u \in P_v} x_{uv} - 1 \leq h_v & \forall v \in R \label{eq:h_cons}\\
                        & \sum_{v \in C_u} x_{uv} \geq 1  & \forall u \in V \setminus X \label{eq:outgoing_arcs} \\
                        & \sum_{u \in P_v} x_{uv} \geq 1  & \forall v \in V \setminus \{\rho\} \label{eq:inrets_arcs} \\
                        & l_u \leq l_v  & \forall uv \in A \label{eq:temp_label} \\
                        & l_u \leq l_v - 1 & \forall v \in V \setminus R, \forall u \in P_v \label{eq:ta1_label}\\
                        & l_u \leq l_v - 1 + |V|(1 - x_{uv}) & \forall v \in R, \forall u \in P_v  \label{eq:ta2_label}\\
                        & l_u \geq l_v - |V| x_{uv} & \forall v \in R, \forall u \in P_v \label{eq:ha_label} \\
                        & x_a \in \{0, 1\} & \forall a \in A \nonumber \\
                        & h_v \in \{0, 1\} & \forall v \in R \nonumber \\
                        & l_v \in \mathbb{R_+} & \forall v \in V \nonumber
\end{align}

With constraint \eqref{eq:h_cons}, $h_v$ becomes one if all incoming arcs of reticulation $v$ are vertical. With \eqref{eq:outgoing_arcs} we have that all vertices must have at least one outgoing vertical arc. Then, \eqref{eq:inrets_arcs} guarantees that each reticulation has at least one incoming vertical arc. Constraint \eqref{eq:temp_label} creates the non-temporal labelling in the network, where with \eqref{eq:ta1_label} the label of $u$ is strictly smaller than that of $v$ if $v$ is not a reticulation. Then, \eqref{eq:ta2_label} sets $x_{uv}$ to one if $uv$ is vertical, for all reticulation vertices $v$. Finally, with \eqref{eq:ha_label} the labels of $u$ and $v$ become equal if $x_{uv}$ is zero.

\subsection{Experimental Results}
\label{sec:experiments}
In this section, we apply the MILP described in the previous section to a set of real binary networks, and we perform a runtime analysis over simulated networks. The code for these experiments is written in Python and is available at \url{https://github.com/estherjulien/OrchardProximity}. All experiments ran on an Intel Core i7 CPU @  1.8 GHz with 16 GB RAM. For solving the MILP problems, we use the open-source solver SCIP \cite{BestuzhevaEtal2021OO}.

The real data set consists of different binary networks found in a number of papers, collected on \url{http://phylnet.univ-mlv.fr/recophync/networkDraw.php}. These networks have a leaf set of size up to 39 and a number of reticulations up to 9, with one outlier that has 32 reticulations. All the \leo{binary} instances completed within one second (at most 0.072 seconds). Based on the results, we observe that only two out of the 22 \leo{binary} networks have a value of $L_{\cOr}(N) > 0$, thus, that only two are non-orchard.

The simulated data is generated using the birth-hybridization network generator of~\cite{zhang2018bayesian}. This generator has two user-defined parameters: $\lambda$, which regulates the speciation rate, and $\nu$, which regulates the hybridization rate. Following~\cite{janssen2021comparing} we set $\lambda = 1$ and we sampled $\nu\in[0.0001, 0.4]$ uniformly at random. We generated an instance group of size $50$ for each pair of values $(L, R)$, with the number of leaves $L \in \{20, 50, 100, 150, 200\}$ and the number of reticulations $R \in \{5, 10, 20, 30, 40, 50, 100, 200\}$. In our implementation, we only defined variables $x_a$ for incoming reticulation arcs. Therefore, the binary variables only depend on the number of reticulations in the network. In \Cref{tab:simu_results}, the runtime and $L_{\cOr}(N)$ value results for the simulated instances are shown against the reticulation number of the networks. The time limit was set to one hour. We can observe from these results that for networks up to 50 reticulations, almost all instances are solved to optimality within a second. Then for $R=100, 200$ the runtime increases, but \leo{instances are} often still solved within reasonable time. 

\begin{table}
\small
    \centering
        \caption{Results of simulated networks, with $R$ the number of reticulations in the network, \textit{Compl. (\%)} the percentage of instances that were solved within the one hour time limit, \textit{Runtime compl.} gives the 5\%, 50\%, and 95\% quantile for the runtimes of the completed instances and, finally, $L_{\cOr}(N)$ gives the different quantile percentages for the optimal objective value of the completed instances.}
    \label{tab:simu_results}
\begin{tabular}{rrrrrrrrrrr}
\toprule
  \multicolumn{1}{c}{$R$} & & \multicolumn{1}{c}{Compl. (\%)} & &\multicolumn{3}{c}{Runtime compl. (s)}  &  & \multicolumn{3}{c}{$L_{\cOr}(N)$}\\ \hhline{-~-~---~---}
   & & & & 5\% & 50\% &  95\% & &  5\% &  50\% &  95\% \\ 
\midrule
  5 & &              100.0 & & 0.002 &   0.008 &     0.018 &  &      0 &         0 &         0 \\
 10 & &               99.2 &  & 0.004 &   0.011 &     0.023 &  &      0 &         0 &         2 \\
 20 & &               98.8 &  & 0.008 &   0.016 &     0.051 &  &      0 &         0 &         4 \\
 30 & &               98.0 &  & 0.013 &   0.023 &     0.089 &  &      0 &         1 &         7 \\
 40 & &               97.2 &  & 0.018 &   0.038 &     0.189 &  &      0 &         1 &        10 \\
 50 & &               97.6 &  & 0.022 &   0.049 &     0.632 &  &      0 &         2 &        13 \\
100 & &               75.6 &  & 0.058 &   0.233 &   109.395 &  &      2 &         8 &        28 \\
200 & &               61.6 &  & 0.614 &  15.783 &  3,078.631 &  &     14 &        27 &        56 \\
\bottomrule
\end{tabular}
\end{table}

\section{Discussion}
\label{sec:discussion}

In this paper we investigated the minimum number of leaf additions needed to make a network orchard, as a way to measure the extent to which a network deviates from being orchard.
We showed that computing this measure is NP-hard (\Cref{thm:L_Or=Hard}), and give a sharp upper bound by the number of reticulations minus one (\Cref{thm:LORUpperBound}). 
% introduced two ways to measure the extent to which a network deviates from being orchard. The first measure computes the minimum number of leaf additions needed to make a network orchard.
The measure was reformulated to one in terms of minimizing the number of inrets over all possible non-temporal labellings. 
% The second measure computes, over all possible non-temporal labellings, the minimum number of reticulations with two incoming vertical arcs.
% We showed that the two measures are equal. The first measure was used to show that computing this proximity measure is an NP-hard problem; 
% In  second measure was used to produce a MILP formulation. 
In Section~\ref{sec:ILP} we use this reformulation to model the problem of computing the leaf addition measure as a MILP.
Experimental results show that real-world data instances were solved within a second and the formulation worked well also over synthetic instances, being able to solve \leo{almost all instances} up to 50 reticulations and 200 leaves within one second. For bigger instances the runtime however increased.

% --- Forbidden structures ---
\leo{Our NP-hardness result is interesting when compared it to the computational complexity of the corresponding problem for different network classes.}
The problem of finding the minimum number of leaves to add to make a network tree-based can be solved in polynomial time~\cite{francis2018new} (\Cref{lem:L_TB=polynomial}) \leo{and we showed that the same is true for the class of tree-child networks (\Cref{lem:L_TCisPoly}).} 
%For the class of tree-child networks, we showed that the problem was also polynomial time solvable (\Cref{lem:L_TCisPoly}). 
Interestingly, the class of tree-child networks \leo{is} contained in the class of orchard networks~\cite{janssen2021cherry} which \leo{is} in turn contained in the class of tree-based networks~\cite{huber2019rooting}.
The reason for such an NP-hardness sandwich can \leo{perhaps} be attributed to the lack of forbidden shapes.
Leaf additions to obtain tree-child or tree-based networks target certain forbidden shapes in the network. In the case of tree-child networks, we add a leaf to exactly one outgoing arc of \leo{each} omnian; for tree-based networks, we add a leaf to exactly one arc of \leo{each} W-fence.
%Unsurprisingly,
\leo{The problem} of finding a \leo{characterization of orchard networks in terms of} (local) forbidden shapes
%\footnote{We mention that the forbidden shape is local here, since the acyclic cherry cover condition (\Cref{thm:AcyclicCherryCover}) is technically a forbidden shape, but one that is non-local.}
has been elusive thus far~\cite{janssen2021cherry} - perhaps the NP-hardness result for the orchard variant of the problem indicates \leo{that} finding \leo{such a characterization} for orchard networks \leo{may not be possible}.

% While proximity measures are interesting to compute in their own right, they also bring to light structural characterizations of network classes. 
% In the event of leaf additions, we target certain forbidden structures in the network. 
% \textbf{Mention that this is how tree-child and tree-based networks allow for poly time algorithms. The NP-hardness for orchard is an indication that a forbidden structure -- which has been elusive in the study of orchards -- probably does not exist in the same vein as TC and TB.}

% --- Extention to non-binary ---

One can also consider the leaf addition problem for non-binary networks.
Non-binary networks generalize the networks considered in this paper by allowing vertices to have varying indegrees and outdegrees.
This generalized problem remains NP-complete since the \leo{binary version} is a specific case. \leo{It could be interesting to try to find an MILP formulation for the nonbinary version.}

% --- Extention to more measures ---

Another natural research direction is to consider different proximity measures.
One that may be of particular interest is a proximity measure based on arc deletions. That is, what is the minimum number of reticulate arc deletions needed to make a network orchard? 
Susanna showed that \leo{this measure is incomparable to} the leaf addition proximity measure~\cite{susanna2022making}, yet it is not known if it is also NP-hard to compute. 

% \begin{figure}
%     \centering
%     \subfloat[$L_{\cOr} < A_{\cOr} = A^*_{\cOr}$]{\includegraphics{}}
%     \hspace{5mm}
%     \subfloat[$L_{\cOr} > A_{\cOr}$]{\includegraphics{}}
%     \hspace{5mm}
%     \subfloat[$L_{\cOr} > 
%     A^*_{\cOr}$]{\includegraphics{}}
%     \caption{Orchard incomparable measures counter examples.}
%     \label{fig:Lor_Aor_star_incomp}
% \end{figure}
% \todoYuki{Comment for figure: I think these should be strict inequalities. Also 
% can we get the exact numbers on there? (e.g. $L_\cOr = 1$ for leftmost figure)}

\bibliography{z_bib}

\appendix

\section{Appendix}
\label{sec:appendix}

\begin{lemman}[\ref{lem:L_TC=Omnian}]
  \textit{Let~$N$ be a network. Then $L_{\cTC}(N)$ is equal to the number of omnians. Moreover,~$N$ can be made tree-child by adding a leaf to exactly one outgoing arc of each omnian.}
\end{lemman}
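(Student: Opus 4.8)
The plan is to prove the equality $L_{\cTC}(N) = |\Omega|$, where $\Omega$ denotes the set of omnians of $N$, by sandwiching it between matching upper and lower bounds; the ``moreover'' clause will fall out of the upper-bound construction. Throughout I would lean on the characterization recalled earlier that a network is tree-child if and only if it has no omnians, so the entire argument reduces to tracking which vertices are omnians as leaves are added. The single most useful elementary observation, which I would state first, is that adding a leaf to an arc $e = uv$ (subdividing $uv$ by a new tree vertex $w$ and attaching a leaf $x$ via $wx$) changes neither the indegree nor the outdegree of any pre-existing vertex: $\tail(uv) = u$ trades its out-arc to $v$ for an out-arc to $w$, and $\head(uv) = v$ trades its in-arc from $u$ for an in-arc from $w$, while no clean-up is triggered. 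In particular every reticulation stays a reticulation under any sequence of leaf additions.

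For the upper bound and the ``moreover'' part simultaneously, I would give the explicit construction: for each omnian $u$, all of whose children are reticulations, pick one outgoing arc $uv$ and add a leaf to it. The created vertex $w$ is a tree vertex that becomes a child of $u$, so $u$ is no longer an omnian, and $w$ itself has the leaf child $x$, so $w$ is not an omnian either. The point to verify is that carrying this out once for every omnian introduces no new omnians and undoes no earlier fix: since the chosen arcs have pairwise distinct tails they are distinct, each subdivision only replaces one reticulation child of its tail by the tree vertex $w$, and this can only destroy the omnian property of that tail, never create the omnian property anywhere. Hence the resulting network has no omnians, is therefore tree-child, and was obtained by adding exactly one leaf to one outgoing arc of each omnian, giving $L_{\cTC}(N) \le |\Omega|$.

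For the lower bound I would argue that each omnian forces a dedicated leaf addition. Fix an original omnian $u$. Using the degree-preservation observation, the direct children of $u$ change only when an outgoing arc of $u$ is subdivided (which installs a new tree vertex as a child of $u$); any other leaf addition leaves the set of direct children of $u$ unchanged as vertices, and those vertices retain their reticulation type. Consequently, since in the final tree-child network $u$ must have a child that is a tree vertex or a leaf, whereas in $N$ all its children were reticulations, at least one leaf addition in the sequence must subdivide an arc whose tail is $u$. Because each arc has a unique tail, a single leaf addition subdivides an arc with tail equal to at most one omnian, so the ``fixing'' additions assigned to distinct omnians are distinct. This defines an injection from $\Omega$ into the set of leaf additions used, yielding $L_{\cTC}(N) \ge |\Omega|$ and, together with the upper bound, the claimed equality.

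I expect the main obstacle to be making the lower-bound bookkeeping airtight: one must rule out that operations elsewhere in the network -- for instance subdividing an incoming arc of $u$, or an arc incident to one of $u$'s reticulation children -- could either fix $u$ or convert one of its reticulation children into a non-reticulation. The degree-preservation observation is precisely the lever that handles this, but I would isolate, as a small standalone claim proved by induction on the length of the addition sequence, the statement that \emph{until some outgoing arc of $u$ is subdivided, every direct child of $u$ is a reticulation}. Once this invariant is in place, the counting argument assembles routinely.
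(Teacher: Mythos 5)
Your proof is correct and takes essentially the same approach as the paper: both establish the equality by showing that each leaf addition can fix at most one omnian (lower bound) and that adding a leaf to one outgoing arc of each omnian removes it without creating new omnians (upper bound), via the characterization of tree-child networks as omnian-free. Your write-up is somewhat more explicit about degree preservation and the injection argument, but the underlying ideas coincide with the paper's proof.
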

\begin{proof}
	By definition, a network is tree-child if and only if it contains no 
	omnians. We show that every leaf addition can result in a network with one 
	omnian fewer than that of the original network.
	Let~$uv$ be an arc where~$u$ is an omnian. Add a leaf~$x$ to~$uv$. In the 
	resulting network,~$u$ has a child (the parent of~$x$) that is a tree 
	vertex, and it is no longer an omnian. The newly added tree vertex has a 
	leaf child~$x$; the parent-child combinations remain unchanged for the rest 
	of the network, so at most one omnian (in this case~$u$) can be removed per 
	leaf 
	addition. It follows that~$L_{\cTC}(N)$ is at least the number of omnians 
	in~$N$. By targeting arcs with omnian tails, we can remove at least one 
	omnian per every leaf addition, so that~$L_{\cTC}(N)$ is at most the number 
	of omnians in~$N$. Therefore,~$L_{\cTC}(N)$ is exactly the number of 
	omnians in~$N$.
\end{proof}

\begin{lemman}[\ref{lem:L_TCisPoly}]
    \textit{Let~$N$ be a network. Then~$L_{\cTC}(N)$ can be computed in~$O(|N|)$ time.}
\end{lemman}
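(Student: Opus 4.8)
The plan is to leverage \Cref{lem:L_TC=Omnian}, which already establishes that $L_{\cTC}(N)$ equals the number of omnians in $N$. Thus the task reduces purely to showing that the number of omnians can be counted in $O(|N|)$ time. Recall that an omnian is an internal vertex all of whose children are reticulations. The key observation is that this is an entirely \emph{local} property: whether a given vertex $v$ is an omnian depends only on $v$ and its (at most two) children, together with the in-degrees of those children.

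First I would traverse the vertex set of $N$ once and, for each vertex, record its in-degree and out-degree; this classifies every vertex as the root, a tree vertex, a reticulation, or a leaf in total time $O(|N|)$, since the sum of all degrees is $O(|N|)$ for a bounded-degree (binary) network. Equivalently, one can identify the reticulations directly as the vertices of in-degree $2$. Next I would iterate over each internal (non-leaf) vertex $u$ and inspect its children: $u$ is an omnian precisely when \emph{every} child of $u$ is a reticulation. Since $N$ is binary, each internal vertex has at most two children, so this check takes constant time per vertex. Maintaining a running counter of the omnians thus detected yields the total count after a single linear-time pass.

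Combining these two passes, the whole computation is $O(|N|)$; then by \Cref{lem:L_TC=Omnian} this count \emph{is} $L_{\cTC}(N)$, completing the proof. I do not anticipate any genuine obstacle here: the content is essentially a remark that omnian-detection is local and hence linear-time, and the only point requiring a moment of care is to confirm that the total work of examining all children across all vertices is bounded by the number of arcs, which is $O(|N|)$ for a binary network. The argument is therefore a short observation rather than a substantive proof, riding entirely on the structural equality already supplied by \Cref{lem:L_TC=Omnian}.
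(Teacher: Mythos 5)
Your proof is correct and follows essentially the same route as the paper: both reduce the claim to \Cref{lem:L_TC=Omnian} and then count omnians in a single linear pass, using the facts that a vertex is an omnian exactly when all its children have indegree~$2$ and that the bounded (binary) degree makes each local check constant time. No gaps to report.
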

\begin{proof}
    We first show that the number of omnians of~$N$ can be computed in~$O(|N|)$ 
    time, by checking, for each vertex, the indegrees of its children. A vertex 
    is an omnian if and only if all of its children are of indegree-2. Since 
    the degree of every vertex is at most 3, each search within the for loop 
    takes constant time. The for loop iterates over the vertex set which is of size~$O(N)$. 
    By \Cref{lem:L_TC=Omnian}, since~$L_{\cTC}(N)$ is the number of omnians in~$N$, we can compute~$L_{\cTC}(N)$ in~$O(|N|)$ time.
\end{proof}

It has been shown already that~$L_{\cTB}(N)$ can be computed in~$O(|N|^{3/2})$ time 
where~$|N|$ is the number of vertices in~$N$~\cite{francis2018new}. We show that this can in fact be computed in~$O(|N|)$ time.

\begin{lemman}[\ref{lem:L_TB=W-fences}]
    \textit{Let $N$ be a network. Then $L_{\cTB}(N)$ is equal to the number of $W$-fences. Moreover, $N$ can be made tree-based by adding a leaf to any arc in each $W$-fence in $N$.}
\end{lemman}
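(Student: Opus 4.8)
The plan is to control the number of W-fences under leaf additions, using the fact (\Cref{lem:TB=NoW}) that a network is tree-based if and only if its unique zig-zag decomposition (\Cref{thm:UniqueMaxZigZag}) contains no W-fence. Writing $w(N)$ for the number of W-fences of $N$, it thus suffices to prove two inequalities: that adding one leaf to an arc of each W-fence makes $N$ tree-based (giving $L_{\cTB}(N) \le w(N)$ together with the ``moreover''), and that every single leaf addition decreases the W-fence count by at most one (giving $L_{\cTB}(N) \ge w(N)$).

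The technical heart is a local analysis of a single leaf addition. Adding a leaf $x$ to an arc $e = uv$ subdivides $e$ by a new tree vertex $w$ (whose children are the head of $e$ and the leaf $x$) and leaves every other arc untouched. Hence in the zig-zag decomposition only the maximal trail $Z_e$ containing $e$ can change, while every other maximal trail, in particular every other W-fence, is preserved verbatim. I would first observe that the two halves $uw$ and $wv$ of $e$ do not form a zig-zag connection (they meet head-to-tail at $w$), so $Z_e$ is cut at $w$, whereas the new arc $wx$ attaches to $wv$ via their common tail $w$. The key point is then to classify the resulting pieces by the tails of their endpoint arcs, which is exactly what distinguishes the four trail types.

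For this classification I would use that $\tail(e)$ being a reticulation forces $e$ to be an endpoint arc of $Z_e$, since a reticulation has out-degree one and so $e$ admits no tail-connection; in that case $uw$ splits off as a length-$1$ trail whose only endpoint tail is the reticulation $\tail(e)$, i.e.\ an N-fence, while the remaining piece is headed by the leaf arc $wx$ whose tail $w$ is a tree vertex. In the complementary case $\tail(e)$ is a tree vertex, so every newly created endpoint arc ($uw$, or the leaf arc $wx$) has a tree-vertex tail. Either way each piece of the split has at most one endpoint arc with a reticulation tail, so none of the pieces is a W-fence; a separate but analogous check handles $Z_e$ being a crown, which opens into a single M- or N-fence. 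Consequently a leaf addition never creates a W-fence and destroys $Z_e$ only when $Z_e$ was itself a W-fence, giving $w(N+x) \ge w(N) - 1$.

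With this lemma both bounds follow quickly. For the upper bound and the ``moreover'', treating the (arc-disjoint) W-fences one at a time, adding a leaf to one arbitrary arc of each destroys all of them while creating none, so the resulting network has no W-fence and is tree-based by \Cref{lem:TB=NoW}; hence $L_{\cTB}(N) \le w(N)$. For the lower bound, since each addition lowers the W-fence count by at most one and a tree-based network has zero W-fences, at least $w(N)$ additions are required, so $L_{\cTB}(N) \ge w(N)$. I expect the main obstacle to be the endpoint-type case analysis in the local lemma---especially making the reticulation-tailed endpoints and the crown case airtight---since everything else is bookkeeping; it may be cleanest to phrase the whole argument through the rule that the tail of a newly created endpoint arc is a tree vertex, unless it is the isolated reticulation tail of a length-$1$ piece.
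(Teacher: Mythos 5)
Your proposal is correct and follows essentially the same route as the paper: both rest on \Cref{lem:TB=NoW} together with the uniqueness of the zig-zag decomposition (\Cref{thm:UniqueMaxZigZag}), and both argue by showing that subdividing an arc splits only the maximal trail containing it while every other trail is preserved verbatim. If anything, your write-up is more complete than the paper's, which exhibits the split only for a leaf added to the first arc of a W-fence (yielding two N-fences) and leaves the lower-bound direction---that a single leaf addition can never create a W-fence nor destroy more than one---largely implicit, whereas your endpoint-tail case analysis (including the crown case) establishes it explicitly.
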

\begin{proof}
    By \Cref{lem:TB=NoW}, a network is tree-based if and only if it 
	contains no W-fences. We show that every leaf addition can result in a 
	network with one W-fence fewer than that of the original network.
	Suppose that~$N$ contains at least one 
	W-fence. 
	Otherwise we may conclude that the network is tree-based by 
	\Cref{lem:TB=NoW}. 
	Let~$(a_1,a_2,\ldots, a_k)$ be a W-fence in~$N$ where $a_i = u_iv_i$ 
	for~$i\in[k]$, and add a leaf~$x$ to~$a_1$; let~$p_x$ be the tree vertex 
	parent of~$x$. In the resulting network, the arcs 
	in~$\{u_1p_x,p_xv_1,p_xx, a_2, a_3,a_4,\ldots, a_k\}$ are decomposed into 
	their unique maximal zig-zag trails (\Cref{thm:UniqueMaxZigZag}) as two 
	N-fences~$(u_1p_x)$ and~$(a_k,a_{k-1},\ldots,a_3,a_2,p_xv_1,p_xx)$. All 
	other arcs remain in the same maximal zig-zag trails as that of~$N$. 
	Therefore the number of W-fences has gone down by exactly one. This can be 
	repeated for all W-fences in the network; it follows that~$L_{\cTB}(N)$ is 
	the number of W-fences in~$N$.

    A quick check shows that adding a leaf to any arc in the W-fence decomposes the W-fence into two N-fences.
\end{proof}

\begin{lemman}[\ref{lem:L_TB=polynomial}]
	\textit{Let~$N$ be a network. Then $L_\cTB(N)$ can be computed in~$O(|N|)$ time.}
\end{lemman}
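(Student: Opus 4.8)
The plan is to reduce the computation of $L_\cTB(N)$ to counting the W-fences of $N$, and then to extract this count in linear time from a local arc-adjacency structure. By \Cref{lem:L_TB=W-fences}, $L_\cTB(N)$ equals the number of W-fences of $N$, and by \Cref{thm:UniqueMaxZigZag} this number is well defined since the zig-zag decomposition is unique. Hence it suffices to compute, in $O(|N|)$ time, the number of maximal zig-zag trails of $N$ that are W-fences.

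The key observation I would use is that maximal zig-zag trails are exactly the connected components of a graph of maximum degree two whose nodes are the non-root arcs of $N$. Declare two distinct arcs to be \emph{linked} if they share their tail (equivalently, they are the two out-arcs of a common tree vertex) or share their head (equivalently, they are the two in-arcs of a common reticulation). Since a tree vertex has outdegree $2$ and a reticulation has indegree $2$, each arc $uv$ has at most one link through its tail (present iff $u$ is a tree vertex) and at most one through its head (present iff $v$ is a reticulation); thus every node of this arc-adjacency graph has degree at most two, so its components are paths and cycles. Moreover, consecutive links along a component must alternate between tail-sharing and head-sharing --- two successive tail-links would force three arcs out of a single tree vertex, and symmetrically for heads --- so each component is precisely a maximal zig-zag trail, with cycles corresponding to crowns and paths to fences. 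This structure can be built in $O(|N|)$ time by iterating once over the vertices (adding a tail-link at each tree vertex and a head-link at each reticulation), and its components can be traversed in total time $O(|N|)$ since every arc is visited a constant number of times.

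It then remains to classify each path component. For a path component I would locate its two endpoint arcs (the degree-one nodes of the component) and declare the component a W-fence precisely when the tails of both endpoint arcs are reticulations, matching the definition; cycle components are crowns and are discarded. Note that evenness of the length need not be checked separately: if both endpoint tails are reticulations, then by the alternation of links the first and last links are both head-links, forcing an even number of arcs. Summing the W-fence indicator over all components yields the W-fence count, and hence $L_\cTB(N)$, in $O(|N|)$ total time. The main obstacle I anticipate is the structural step, namely proving rigorously that the components of the arc-adjacency graph coincide with the maximal zig-zag trails of \Cref{thm:UniqueMaxZigZag} (both the alternation argument and the maximality); once that is in place, the remaining bookkeeping and classification are routine.
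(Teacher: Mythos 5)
Your overall route is sound but genuinely different from the paper's in its second half. The paper's proof is essentially a citation: it invokes Proposition 5.1 of \cite{hayamizu2021structure} for computing the zig-zag decomposition in $O(|N|)$ time and then counts W-fences via \Cref{lem:L_TB=W-fences}. You instead re-derive the linear-time decomposition from scratch through an arc-adjacency graph (arcs as nodes, links for shared tails at tree vertices and shared heads at reticulations). That structural argument works: in a binary network every non-root arc has at most one tail-link and at most one head-link, so components are paths and cycles, links alternate automatically, and the components coincide with the maximal zig-zag trails of \Cref{thm:UniqueMaxZigZag}. This buys self-containedness at the cost of having to prove the structural claim that the paper gets for free from the citation.

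However, your classification step contains a genuine error for components consisting of a single arc. An arc $uv$ with $u$ a reticulation and $v$ a tree vertex or leaf is isolated in your arc-adjacency graph (no tail-link since $u$ has outdegree $1$, no head-link since $v$ has indegree $1$). Its ``two endpoint arcs'' coincide, the tail of that arc is a reticulation, so your rule counts it as a W-fence. But by the paper's definition a W-fence has even length $k\ge 2$; a length-$1$ trail of this form is an N-fence (this is exactly how the paper treats the arcs leaving the principal parts of the gadgets in the proof of \Cref{obs:N_GisTB}), and it is no obstruction to being tree-based. Your claim that evenness need not be checked fails precisely here: the alternation argument requires at least one link, and a length-$1$ component has none. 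The consequence is not cosmetic: in any network where some reticulation has a leaf or tree-vertex child --- for instance every tree-child network with at least one reticulation, for which $L_\cTB(N)=0$ --- your algorithm reports a strictly positive count. The fix is easy (classify a path component as a W-fence only when it has two \emph{distinct} degree-one endpoint arcs whose tails are both reticulations), but as written the procedure computes the wrong value.
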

\begin{proof}
   Finding the maximal zig-zag decomposition takes $O(|N|)$ time (Proposition 5.1 of \cite{hayamizu2021structure}). Counting the number of $W$-fences in the decomposition gives~$L_\cTB(N)$ by \Cref{lem:L_TB=W-fences}.
\end{proof}

\begin{obsn}[\ref{obs:N_GisTB}]
    \textit{Let~$G$ be a 3-regular graph and let~$N_G$ be the network obtained by the reduction. Then~$N_G$ is tree-based.}
\end{obsn}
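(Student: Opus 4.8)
The plan is to apply \Cref{lem:TB=NoW}, which tells us that $N_G$ is tree-based if and only if it contains no W-fences. So the goal reduces to showing that the unique zig-zag decomposition of $N_G$ (guaranteed by \Cref{thm:UniqueMaxZigZag}) consists only of M-fences and N-fences, with no W-fences. Recall that a W-fence is a maximal zig-zag trail of even length whose two endpoints $\tail(a_1)$ and $\tail(a_k)$ are both reticulations. The author's remark preceding the observation already signals the strategy: ``As the arcs of $N_G$ are decomposed into M-fences and N-fences.'' So I would argue directly that every maximal zig-zag trail falls into one of these two types.

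First I would classify the vertices of $N_G$ by their roles in the construction. The tree vertices are the $w_i^v$, $m_i^v$, $\rho^v$, the $s_i$, and the root $\rho$; the reticulations are the $r_i^v$; and the leaves are the $\ell_i^v$. I would then trace the maximal zig-zag trails. The principal part of each $\Gad(v)$ is explicitly stated to be an N-fence of length $15$, and the key point is that a zig-zag trail alternates between sharing tails (at tree vertices) and sharing heads (at reticulations). To rule out W-fences, I would check each reticulation $r_i^v$ and verify that it cannot serve as the $\tail(a_1)$-reticulation endpoint of a longer trail that also terminates at another reticulation tail. Equivalently, I would verify that every maximal zig-zag trail that begins at a reticulation-tail terminates at a tree-vertex tail (making it an N-fence). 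The reticulations $r_1^v,\dots,r_4^v$ have leaf children $\ell_1^v,\dots,\ell_4^v$, so the outgoing arc of each such reticulation is a tree arc ending at a leaf, which caps the trail at a tree-vertex side. For $r_5^v,r_6^v,r_7^v$, their outgoing arcs (added in Step 3 of the reduction) go to some $w_j^u$ in an adjacent gadget, which is a tree vertex, again capping that end at a tree vertex. The top reticulation $r_0^v$ feeds into $r_1^v$ and is fed by the tree vertices $m_1^v$ and $m_4^v$, so its incoming arcs attach it to tree-vertex tails.

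The main technical step, and the likely obstacle, is handling the trails passing through the inter-gadget linking arcs and the $m_i^v$ block above the principal part, since these create the M-fences and must be shown not to accidentally chain two reticulation-tails together. I would carefully follow the arcs around $m_1^v,\dots,m_4^v,\rho^v$: these are all tree vertices, and an M-fence is precisely an even-length trail all of whose tails are tree vertices, so the ``diamond'' structure $m_1^v$–$m_4^v$ together with the $w_i^v$ connections should decompose into M-fences. The crucial verification is that no reticulation has both of its incident trail-continuations leading to other reticulations; since every reticulation in $N_G$ either has a leaf child (for $r_1^v,\dots,r_4^v$) or has its outgoing arc landing on a $w_j^u$ tree vertex (for $r_5^v,r_6^v,r_7^v$), and since $r_0^v$ and the chain of $r_i^v$ are separated by the tree vertices $w_i^v$ along the principal part, every maximal trail touching a reticulation terminates on a tree-vertex tail on at least one side. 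This forces each such trail to be an N-fence rather than a W-fence. Having accounted for all arcs (principal parts as N-fences, the $m$-block and root-connecting $s_i$ path as M-fences, and the pendant leaf arcs), I would conclude that no W-fence exists, and hence by \Cref{lem:TB=NoW} that $N_G$ is tree-based.
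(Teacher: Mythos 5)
Your proposal is correct and follows essentially the same route as the paper: both exhibit the unique zig-zag decomposition of $N_G$ (principal parts as N-fences, pendant and inter-gadget arcs as length-$1$ N-fences, the $m$-block and $s_i$ arcs as M-fences) and then invoke \Cref{lem:TB=NoW} to conclude tree-basedness from the absence of W-fences. The paper's proof is simply a terser statement of the same case analysis you carry out.
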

\begin{proof}
    It is easy to check that the arcs of~$N_G$ are decomposed into M-fences and N-fences (the principal part of each gadget~$\Gad(v)$ is an N-fence; each arc leaving the principal part of a gadget~$\Gad(v)$ is an N-fence of length $1$; the remaining arcs decompose into M-fences of length $2$). By \Cref{lem:TB=NoW},~$N_G$ must be tree-based. 
\end{proof}

\begin{lemman}[\ref{lem:LORDimpliesVC}]
    \textit{Let~$G$ be a 3-regular graph and let~$N_G$ be the network obtained by the reduction above. Suppose that~$A$ is a set of arcs of~$N_G$, for which adding leaves to every arc in~$A$ results in an orchard network.  
    For every edge~$uv\in E(G)$, there exists an arc~$a\in A$ that is an arc of the principal part of~$\Gad(u)$ or~$\Gad(v)$.}
\end{lemman}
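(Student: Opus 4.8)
The plan is to argue by contraposition. Suppose some edge $uv \in E(G)$ has no arc of $A$ on the principal part of $\Gad(u)$ nor of $\Gad(v)$; I would then show that $N' := N_G + A$, the network obtained after all the leaf additions, cannot be orchard, contradicting the hypothesis. The entire argument is engineered to feed into \Cref{lem:n_fence_cycle}: if I can exhibit inside $N'$ two N-fences of length at least $3$ together with the required pair of crossing directed paths, then every cherry cover auxiliary graph of $N'$ contains a cycle. Since $N'$ is orchard it is tree-based by \Cref{lem:NetworkClassContainment}, hence has a cherry cover, and by \Cref{thm:AcyclicCherryCover} that cover must be acyclic --- the sought contradiction.

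The two N-fences I would use are the principal parts of $\Gad(u)$ and $\Gad(v)$, and the first task is to confirm that they are still N-fences of length $15 \ge 3$ in $N'$. Because no arc of $A$ lies on either principal part, all $15$ arcs of each survive unsubdivided in $N'$ and still form a zig-zag trail; what requires checking is maximality. The key feature is that each principal part begins at the reticulation $r_0^v$ (whose unique out-arc is $a_1^v = r_0^v r_1^v$) and ends at the leaf $\ell_5^v$, so both ends are \emph{locked}. The only places a leaf addition could conceivably extend the trail are the interior vertices $r_i^v$ and $w_i^v$, so I would verify that adding a leaf to any arc incident to such a vertex but outside the principal part --- an incoming arc such as $m_1^v r_0^v$, an outgoing leaf arc $r_i^v \ell_i^v$, or a cross arc into some $w_i^v$ --- produces a new arc that meets the relevant vertex only as head-versus-tail, and therefore never continues the zig-zag trail of the principal part. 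Hence each principal part remains a maximal zig-zag trail of length $15$ in $N'$.

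Next I would verify the crossing-path hypothesis of \Cref{lem:n_fence_cycle}. Writing the principal part of $\Gad(v)$ as $(a_1^v,\ldots,a_{15}^v)$ with $a_{2i}^v = w_i^v r_i^v$, one has $\tail(a_{2i}^v) = w_i^v$ and $\head(a_{2i}^v) = r_i^v$. The cross arcs added in Step~3 run from some $r_{\tau_u(v)}^u$ with $\tau_u(v) \in \{5,6,7\}$ to some $w_{\pi_v(u)}^v$ with $\pi_v(u) \in \{1,2,3\}$, i.e.\ from $\head(a_h^u)$ with $h = 2\tau_u(v) \in \{10,12,14\}$ to $\tail(a_i^v)$ with $i = 2\pi_v(u) \in \{2,4,6\}$, and symmetrically from $\head(a_j^v)$ with $j \in \{10,12,14\}$ to $\tail(a_k^u)$ with $k \in \{2,4,6\}$. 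These single arcs are directed paths, and since subdividing arcs (as leaf additions do) preserves directed paths, the paths persist in $N'$. The indices satisfy $k \le 6 < 10 \le h$ and $i \le 6 < 10 \le j$, which are exactly the conditions $k < h$ and $i < j$. Applying \Cref{lem:n_fence_cycle} to the two principal parts yields a cycle in every cherry cover auxiliary graph of $N'$, closing the contradiction.

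I expect the main obstacle to be the maximality verification of the second paragraph: one must argue carefully that the leaf additions permitted elsewhere --- in particular on the cross arcs and on the incoming arcs of the $w_i^v$ --- genuinely cannot splice onto a principal part and lengthen its zig-zag trail, so that \Cref{lem:n_fence_cycle} remains applicable. By contrast, the index bookkeeping of the third paragraph is routine once the convention $a_{2i}^v = w_i^v r_i^v$ is fixed, and the final invocation of \Cref{lem:n_fence_cycle,thm:AcyclicCherryCover} is immediate.
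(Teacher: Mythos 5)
Your proposal is correct and follows essentially the same route as the paper's own proof: contraposition, observing that the principal parts of $\Gad(u)$ and $\Gad(v)$ remain N-fences in $N_G+A$, exhibiting the crossing paths via the cross arcs of Step~3, and then invoking \Cref{lem:n_fence_cycle} together with \Cref{thm:AcyclicCherryCover}. The only difference is one of emphasis --- you spell out the maximality check for the surviving N-fences and the index bookkeeping, which the paper asserts without detail --- so your write-up is, if anything, slightly more careful than the original.
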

\begin{proof}
    We prove this lemma by contraposition. Let us assume that there is an edge~$uv\in E(G)$, such that no arcs of the principal part of~$\Gad(u)$ or~$\Gad(v)$ are in~$A$.
    We shall show that the network obtained by adding leaves to all~$a \in A$ in $N_G$ -- which we denote~$N_G+A$ -- is not orchard.
  
    From \Cref{thm:AcyclicCherryCover} we know that $N_G$ is orchard if and only if~$N_G$ has an acyclic cherry cover.
    We show here that~$N_G+A$ will not have an acyclic cherry cover, thereby showing that~$N_G+A$ is not orchard.

    As no arcs were added to the principal part of~$\Gad(u)$ or~$\Gad(v)$, these principal parts remain N-fences in $N_G+A$. Furthermore by construction $N$ has an arc from some~$\head(a_h^v)$ to~$\tail(a_i^u)$ for even $h \geq 10$ and even $i \leq 6$, and so~$N_G+A$ has a path from~$\head(a_h^v)$ to~$\tail(a_i^u)$. Similarly ~$N_G+A$ has a path from~$\head(a_j^u)$ to~$\tail(a_k^v)$ for some even $j \geq 10$ and $h\leq 6$. Then \Cref{lem:n_fence_cycle} implies that the auxiliary graph of any  cherry cover of  $N_G+A$ contains a cycle. By \Cref{thm:AcyclicCherryCover}, we have that~$N_G+A$ is not orchard.  
\end{proof}

% ADD PROOFS HERE

\begin{lemman}[\ref{lem:n_fence_unique}]
    \textit{Let~$N$ be a tree-based network, and let~$\hat{N_1},\hat{N_2},\ldots, \hat{N_n}$ denote the N-fences of~$N$ of length at least 3. 
    Then every cherry cover of~$N$ contains the reticulated cherry shapes~$\{(\head(a^i_{2j-1})c^i_{2j-1}), a^i_{2j},a^i_{2j+1}\}$ for~$i\in[n]$ and~$j\in\left[\frac{k_i-1}{2}\right]$.}
\end{lemman}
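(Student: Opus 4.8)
The plan is to fix one N-fence, suppress the superscript $i$, write $k=k_i$ and $M=\frac{k-1}{2}$, and prove by induction on $j\in[M]$ that the arc leaving the reticulation $\head(a_{2j-1})$ forces precisely the claimed reticulated cherry shape into any cherry cover. First I would record the local structure. Since an N-fence is a maximal zig-zag trail of odd length with $\tail(a_1)$ a reticulation, consecutive arcs alternately share a head and a tail, so $\head(a_{2j-1})=\head(a_{2j})=:\rho_j$ is a reticulation whose two incoming arcs $a_{2j-1},a_{2j}$ both lie in the fence, while $\tail(a_{2j})=\tail(a_{2j+1})=:w_j$ is a tree vertex with outgoing arcs $a_{2j},a_{2j+1}$. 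Hence $a_{2j}=w_j\rho_j$ is the arc from one parent of $\rho_j$, and $a_{2j-1}$ is the arc from its other parent, which equals $w_{j-1}$ when $j\ge 2$ and equals $\tail(a_1)$ when $j=1$. Writing $c_j:=c_{2j-1}$ for the unique child of $\rho_j$, the shape claimed for index $j$ is $R_j:=\{\rho_jc_j,\,a_{2j},\,a_{2j+1}\}$, a reticulated cherry shape with reticulation $\rho_j$ and tree vertex $w_j$.

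The key step is to enumerate how the arc $\rho_jc_j$ can be covered. It is not the root arc, so every cherry cover covers it exactly once. Its tail $\rho_j$ is a reticulation, of outdegree $1$; therefore $\rho_jc_j$ cannot belong to a cherry shape, whose internal vertex has outdegree $2$, nor can it be either arc emanating from the tree vertex $p_y$ of a reticulated cherry shape. The only remaining possibility is that $\rho_jc_j$ is the arc $p_xx$ of a reticulated cherry shape with $p_x=\rho_j$ and $x=c_j$; then $p_y$ is a parent of $\rho_j$, and once $p_y$ is fixed the whole shape is determined. Choosing $p_y=w_j$ gives exactly $R_j$, while choosing $p_y=\tail(a_{2j-1})$ (possible only when that parent is a tree vertex) gives the single competitor $R'_j:=\{\rho_jc_j,\,a_{2j-1},\,a_{2j-2}\}$. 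These are the only two candidate shapes covering $\rho_jc_j$.

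I would then run the induction. For $j=1$ the competitor does not exist, because the other parent $\tail(a_1)$ of $\rho_1$ is a reticulation and cannot play the role of the tree vertex $p_y$; hence $R_1$ is forced. For the inductive step, assume $R_1,\dots,R_{j-1}$ all lie in the cover. Then $R_{j-1}=\{\rho_{j-1}c_{j-1},\,a_{2j-2},\,a_{2j-1}\}$ already covers $a_{2j-1}$ and $a_{2j-2}$, while the competitor $R'_j$ would cover these same two arcs; since each arc is covered exactly once, $R'_j$ is excluded and $R_j$ is forced. Iterating over all $j\in[M]$ and all N-fences yields the statement. The only real subtlety is the exhaustive case analysis in the second paragraph ruling out every covering of $\rho_jc_j$ except $R_j$ and $R'_j$; once that is in place, the conflict with the previously forced shape makes the induction immediate.
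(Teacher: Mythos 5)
Your proposal is correct and follows essentially the same route as the paper: your enumeration of how the outgoing arc $\rho_j c_j$ of each reticulation can be covered is precisely the justification of the paper's opening observation that exactly one incoming arc of every reticulation is the middle arc of a reticulated cherry shape, and your induction (base case forced because $\tail(a_1)$ is a reticulation and cannot serve as $p_y$; inductive step forced because the competitor shape would cover $a_{2j-1}$ a second time) matches the paper's ``repeating this argument'' step, just spelled out more explicitly.
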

% Moved to appendix
\begin{proof}
    Let~$\hat{N_i} = (a^i_1,a^i_2,\ldots, a^i_{k_i})$ be an N-fence of length~$k_i\ge 3$.
    Observe that in every cherry cover, exactly one incoming arc of every reticulation is covered by a reticulated cherry shape as a middle arc (since the network is binary; for non-binary networks, this is not true in general~\cite{van2021unifying}).
    Since~$\head(a^i_1)$ is a reticulation, one of~$a^i_1$ or~$a^i_2$ must be in a reticulated cherry shape as a middle arc. But~$\tail(a^i_1)$ is a reticulation; therefore,~$a^i_2$ must be in a middle arc of a reticulated cherry shape.
    The other two arcs of the same reticulated cherry shapes are then fixed to be~$\head(a^i_1)c^i_1$ and~$a^i_3$. 
    Repeating this argument for the reticulations~$\head(a^i_{2j+1})$ for~$j\in\left[\frac{k_i-1}{2}\right]$ gives the required claim for the N-fence~$\hat{N_i}$; further repeating this argument for every N-fence gives the required claim.
\end{proof}

\begin{lemman}[\ref{lem:n_fence_cycle}]
\textit{Let $N$ be a tree-based network and suppose that for two N-fences 
$\hat{N_u} := (a^u_1,a^u_2,\ldots, a^u_{k_u})$ and 
$\hat{N_v} := (a^v_1,a^v_2,\ldots, a^v_{k_v})$ of length at least $3$, there exist directed paths in $N$ from $\head(a^u_h)$ to $\tail(a^v_i)$ and from $\head(a^v_j)$ to $\tail(a^u_k)$, for even $h,i,j,k$ with $k < h$ and $i < j$. 
Then every cherry cover auxiliary graph of~$N$ contains a cycle.}
\end{lemman}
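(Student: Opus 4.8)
The plan is to fix an \emph{arbitrary} cherry cover of $N$ and exhibit a closed directed walk of positive length in its auxiliary graph; since any such walk contains a directed cycle and the cover was arbitrary, this shows that every cherry cover auxiliary graph of $N$ is cyclic. The closed walk is assembled from four segments: two ``vertical'' chains running along the N-fences $\hat{N_u}$ and $\hat{N_v}$, and two ``crossing'' segments extracted from the two given directed paths of $N$. Writing $R^u_{j}$ (resp.\ $R^v_{j}$) for the reticulated cherry shape $\{(\head(a^u_{2j-1})c^u_{2j-1}),a^u_{2j},a^u_{2j+1}\}$ forced by \Cref{lem:n_fence_unique}, the target walk is $v_{R^u_{k/2}}\to\cdots\to v_{R^u_{h/2}}\to\cdots\to v_{R^v_{i/2}}\to\cdots\to v_{R^v_{j/2}}\to\cdots\to v_{R^u_{k/2}}$.

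First I would set up the two vertical chains. By \Cref{lem:n_fence_unique} the shapes $R^u_j,R^v_j$ occur in \emph{every} cherry cover, so they are present in the fixed one. For consecutive shapes along an N-fence, the endpoint $\head(a_{2j+1})$ of $R_j$ is exactly the reticulation that is an internal vertex of $R_{j+1}$; hence $R_j$ is directly above $R_{j+1}$, giving the auxiliary arc $v_{R_j}\to v_{R_{j+1}}$. Chaining these yields a directed path $v_{R^u_{k/2}}\to\cdots\to v_{R^u_{h/2}}$ (valid since $k<h$, so $k/2<h/2$) and, symmetrically, $v_{R^v_{i/2}}\to\cdots\to v_{R^v_{j/2}}$ (valid since $i<j$). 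All indices are admissible because $h,i,j,k$ are even while each N-fence has odd length, so e.g.\ $h\le k_u-1$.

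The heart of the argument is the following claim about the crossing segments: any directed path $P=(p_0,\ldots,p_m)$ in $N$ induces a directed walk in the auxiliary graph from the shape covering its first arc to the shape covering its last arc. To prove it, let $C_\ell$ be the unique shape covering $p_\ell p_{\ell+1}$. At an interior vertex $p_\ell$, the arc $p_\ell p_{\ell+1}$ has tail $p_\ell$, so $p_\ell$ is an internal vertex of $C_\ell$; meanwhile $p_\ell$ is the head of $p_{\ell-1}p_\ell$, so either (i) $p_{\ell-1}p_\ell$ is the middle arc of $C_{\ell-1}$, forcing $p_\ell$ to be the reticulation of $C_{\ell-1}$ whose unique out-arc $p_\ell p_{\ell+1}$ again lies in $C_{\ell-1}$, whence $C_\ell=C_{\ell-1}$; or (ii) $p_\ell$ is an endpoint of $C_{\ell-1}$, whence $C_{\ell-1}$ is directly above $C_\ell$ and we obtain the arc $v_{C_{\ell-1}}\to v_{C_\ell}$. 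Deleting consecutive repetitions leaves a directed walk from $v_{C_0}$ to $v_{C_{m-1}}$. Applying this to the path from $\head(a^u_h)$ to $\tail(a^v_i)$: its first arc is the unique out-arc of the reticulation $\head(a^u_h)=\head(a^u_{h-1})$, namely the arc $\head(a^u_{h-1})c^u_{h-1}$ lying in $R^u_{h/2}$, so $v_{C_0}=v_{R^u_{h/2}}$; its last arc enters the \emph{tree} vertex $\tail(a^v_i)$ (the tail of the middle arc $a^v_i$ of $R^v_{i/2}$), which is therefore an endpoint of $C_{m-1}$ and simultaneously the internal tree vertex of $R^v_{i/2}$, yielding a final arc $v_{C_{m-1}}\to v_{R^v_{i/2}}$. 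Together this gives a walk $v_{R^u_{h/2}}\to\cdots\to v_{R^v_{i/2}}$, and symmetrically the second path gives $v_{R^v_{j/2}}\to\cdots\to v_{R^u_{k/2}}$.

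Concatenating the two vertical chains with the two crossing walks closes the displayed walk, which has positive length since it passes through the distinct vertices $v_{R^u_{k/2}}\neq v_{R^u_{h/2}}$. Hence the auxiliary graph of the fixed cover contains a directed cycle, and as the cover was arbitrary, every cherry cover auxiliary graph of $N$ is cyclic. I expect the main obstacle to be cases (i) and (ii) of the crossing claim: one must argue carefully that traversing a reticulation via its middle arc leaves the covering shape unchanged (so the walk is not broken), and that the two endpoints of each path land precisely on the forced shapes $R^u_{h/2}$ and $R^v_{i/2}$ rather than on some neighbouring shape.
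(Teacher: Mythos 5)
Your proposal is correct and follows essentially the same route as the paper's own proof: it invokes Lemma~\ref{lem:n_fence_unique} to pin down the forced reticulated cherry shapes, chains the ``directly above'' relation along each N-fence to get the two within-fence paths, and converts the two directed paths of $N$ into directed walks in the auxiliary graph (your cases (i)/(ii) are exactly the paper's observation that consecutive path arcs lie in the same shape or in shapes one directly above the other), closing the cycle. The only difference is that you spell out the case analysis and the endpoint-matching at $\head(a^u_h)$ and $\tail(a^v_i)$ in more detail than the paper does.
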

% Moved to appendix
\begin{proof}
    Let us again denote the reticulated cherry shape that contains~$a^u_h$ and~$a^u_{h+1}$ by~$R^u_{h/2}$, and similarly for~$R^v_{i/2}$,~$R^v_{j/2}$,and~$R^u_{k/2}$.
    By \Cref{lem:n_fence_unique}, all of ~$R^u_{h/2}$,~$R^v_{i/2}$,~$R^v_{j/2}$,~$R^u_{k/2}$ appear in the cherry cover auxiliary graph. Moreover~$R^u_{k/2}$ is above~$R^u_{h/2}$, and~$R^v_{i/2}$ is above~$R^v_{j/2}$.
    Now observe that for any consecutive arcs on the path from $\head(a^u_h)$ to $\tail(a^v_i)$, either they are part of the same reticulated cherry shape in the cherry cover, or they are part of different cherry shapes with one cherry shape directly above the other. This implies that there is a path from~$R^u_{h/2}$ to~$R^v_{i/2}$ in the cherry cover auxiliary graph. A similar argument shows that there is a path from~$R^v_{j/2}$ to~$R^u_{k/2}$. 
    But then we have that~$R^u_{h/2}$ is above~$R^v_{i/2}$, which is above~$R^v_{j/2}$, which is above~$R^u_{k/2}$, which is above~$R^u_{h/2}$ and we have a cycle.
\end{proof}

\begin{lemman}[\ref{lem:VCiffLORD}]
    \textit{Let~$G$ be a 3-regular graph and let~$N_G$ be the network obtained by the reduction described above. Then $G$ has a minimum vertex cover of size~$k$ if and only if~$L_{\cOr}(N_G) = k$.}
    % $(G,k)$ is a {\sc YES}-instance of \dpVC{} if and only if~$(N_G,k)$ is a {\sc YES}-instance of \dpLD{}.
\end{lemman}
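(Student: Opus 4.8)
The plan is to prove the biconditional $G$ has a minimum vertex cover of size $k$ iff $L_{\cOr}(N_G)=k$ by establishing the two inequalities $L_{\cOr}(N_G) \le \tau(G)$ and $\tau(G) \le L_{\cOr}(N_G)$, where $\tau(G)$ denotes the minimum vertex cover size. The hardness reduction is already set up so that the earlier lemmas do most of the structural work; the two directions correspond to the two annotated \markj{} paragraphs preceding this statement.

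For the direction $L_{\cOr}(N_G) \le \tau(G)$, I would start from an arbitrary minimum vertex cover $V_{sol}$ of $G$ with $|V_{sol}| = k$. For each $v \in V_{sol}$, I would add a single leaf $z^v$ to a carefully chosen arc of the principal part of $\Gad(v)$ — the arc that splits the $15$-arc N-fence into a shorter N-fence and an M-fence, exactly as depicted in \Cref{subfig:gadgets_cherry_cover_leaf}. The key claim is that the resulting network $N_G + \{z^v : v \in V_{sol}\}$ has an acyclic cherry cover, hence is orchard by \Cref{thm:AcyclicCherryCover}. To argue acyclicity, I would observe that by \Cref{lem:n_fence_cycle} the only potential cycles in the cherry cover auxiliary graph arise from the reticulated cherry shapes covering the two N-fences associated with some edge $uv$ of $G$ (the cycle illustrated in \Cref{subfig:gadgets_cherry_cover_graph}). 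Since $V_{sol}$ is a vertex cover, every edge $uv$ has at least one endpoint in $V_{sol}$, so at least one of the two principal parts is disrupted by a leaf addition and is no longer a single N-fence; this breaks the reticulated-cherry-shape chain forced by \Cref{lem:n_fence_unique} and destroys the cycle. I would check that no new cycles are created by the insertion, relying on the local structure shown in \Cref{subfig:gadgets_cherry_cover_leaf_graph}.

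For the reverse direction $\tau(G) \le L_{\cOr}(N_G)$, I would take an optimal set $A$ of arcs to which leaf additions make $N_G$ orchard, with $|A| = L_{\cOr}(N_G)$. By \Cref{lem:LORDimpliesVC}, for every edge $uv \in E(G)$ there is an arc $a \in A$ lying in the principal part of $\Gad(u)$ or $\Gad(v)$. I would define $V_{sol} \subseteq V(G)$ to consist of those vertices $w$ whose principal part of $\Gad(w)$ contains at least one arc of $A$. This set is a vertex cover of $G$, since each edge is covered by the conclusion of \Cref{lem:LORDimpliesVC}. Moreover $|V_{sol}| \le |A|$ because the principal parts of distinct gadgets are vertex-disjoint, so each arc of $A$ contributes to at most one gadget; hence $\tau(G) \le |V_{sol}| \le |A| = L_{\cOr}(N_G)$.

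Combining the two inequalities gives $L_{\cOr}(N_G) = \tau(G)$, which is precisely the statement. The main obstacle I anticipate is the first direction: verifying rigorously that disrupting one principal part per edge genuinely yields a \emph{globally} acyclic cherry cover, rather than merely breaking the one cycle per edge identified by \Cref{lem:n_fence_cycle}. I would need to argue that the N-fences of the gadgets are the \emph{only} sources of forced reticulated cherry shapes (via \Cref{lem:n_fence_unique}), and that once every edge's cycle is broken there is a consistent acyclic choice of cherry cover across the whole network — in particular that the inter-gadget connecting arcs (which form M-fences and length-$1$ N-fences by \Cref{obs:N_GisTB}) and the connector vertices $\rho, s_i, \rho^v$ introduce no additional cycles. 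Checking this amounts to exhibiting one explicit acyclic cherry cover of the modified network, and is the step where the precise gadget geometry of \Cref{fig:VCtoLORDreduction} must be used in full.
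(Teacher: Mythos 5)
Your reverse direction ($\tau(G) \le L_{\cOr}(N_G)$) is correct and matches the paper: apply \Cref{lem:LORDimpliesVC}, take the set of vertices whose gadget principal parts meet $A$, and use arc-disjointness of the principal parts to bound its size. The genuine gap is in your forward direction, at exactly the point you flag as "the main obstacle." You assert that "by \Cref{lem:n_fence_cycle} the only potential cycles in the cherry cover auxiliary graph arise from the reticulated cherry shapes covering the two N-fences associated with some edge $uv$," but \Cref{lem:n_fence_cycle} is a one-way implication: \emph{if} two long N-fences are joined by directed paths in both directions, \emph{then} every cherry cover auxiliary graph contains a cycle. It neither classifies all possible cycles nor provides any converse guaranteeing that, once each edge's forced cycle is broken, an acyclic cherry cover exists. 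So breaking one principal part per edge of $G$ does not, via the lemmas you cite, establish that $N_G$ plus the added leaves is orchard; your argument reduces the claim to "exhibit one explicit acyclic cherry cover of the modified network," which you acknowledge but never carry out. As written, the forward inequality $L_{\cOr}(N_G) \le \tau(G)$ is therefore unproven.

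The paper sidesteps cherry covers entirely in this direction. After adding a leaf $z^v$ to the arc $w_4^v r_4^v$ of $\Gad(v)$ for each $v$ in the vertex cover $V_{sol}$, it invokes the \emph{other} characterization of orchard networks (\Cref{thm:OrchIFFHori}) and writes down an explicit HGT-consistent labelling $t$ of the modified network $M$: connector vertices $\rho, s_i, \rho^v, m_i^v$ get labels $\le -1$, and the principal-part vertices get concrete numeric labels that differ depending on whether $v \in V_{sol}$ (where the added leaf lets the labels "restart" at the split, e.g.\ $t(r_1^v)=t(w_1^v)=12,\dots,t(r_4^v)=t(q^v)=15$ above the split and $t(w_4^v)=1,\dots,t(r_7^v)=t(w_7^v)=4$ below it) or $v \notin V_{sol}$ (labels $5$ through $11$ along the whole fence). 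One then checks every reticulation has exactly one incoming horizontal arc, and this check uses that $V_{sol}$ covers every edge. To complete your proof you would either need to do this labelling construction, or genuinely construct the acyclic cherry cover of $M$ that \Cref{subfig:gadgets_cherry_cover_leaf_graph} only illustrates locally; the citation of \Cref{lem:n_fence_cycle} cannot substitute for either.
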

\begin{proof}
    Suppose first that~$V_{sol}$ is a vertex cover of~$G$ with at most~$k$ vertices.
    We shall show that 
    % $(\implies)$ Given a vertex cover~$V_{sol}$ of~$G$, we argue that if we add a 
    adding a leaf to an arc of the principal part of each~$\Gad(v)$ for~$v \in V_{sol}$ makes $N_G$ orchard. 
    This will show that the minimum vertex cover of~$G$ is at least $L_{\cOr}(N_G)$. 
    % and therefore that~$(N_G,k)$ is a {\sc YES}-instance of \dpLD{}.
    In the remainder of this proof, we will refer to vertices and arcs of~$N_G$ as introduced above in the reduction.
    % Since all vertices of~$G$ are of degree three, we will now assume that~$v$ has degree of exactly three, with neighboring vertices~$s$,~$u$, and~$w$. 
    % The reduction for graphs with degree at most three still works; only the N-fence is shorter in this case. 
    
    % For every~$v \in V_{sol}$, a leaf is added to the principal part of~$\Gad(v)$. We now claim that we can reduce all gadgets incident to~$\Gad(v)$. Namely,~$\Gad(s)$,~$\Gad(u)$, and~$\Gad(w)$. And if~$V_{sol}$ is the vertex cover of~$G$, this means that all nodes in the graph are covered. Or equivalently, all gadgets can be reduced with a cherry-picking procedure. By definition, if such a sequence can be constructed, the network is orchard. 
    
    For every~$v \in V_{sol}$, we add a leaf~$z^v$ to the arc~$w_4^v r_4^v$ of~$\Gad(v)$ (see \Cref{subfig:gadgets_cherry_cover_leaf}). Let~$q^v$ be the parent of~$z^v$.
    The key idea is that this splits the principal part of $\Gad(v)$ from an N-fence into an N-fence and an M-fence, and this allows us to avoid the cycle in the cherry cover auxiliary graph (see \Cref{subfig:gadgets_cherry_cover_leaf_graph}).
    
    Let us call the new network~$M$.
    To formally show that $M$ is orchard, we give an HGT-consistent labelling~$t:V(M)\rightarrow\R$.
    
    Begin by setting $t(\rho) = 0$, and for any vertex in $s_1,\dots, s_{g-1}$ or $\rho^v, m_4^v,\dots, m_2^v$ for any $v$ in $V(G)$, let this vertex have label equal to the label of its parent plus~$1$. 
    Let $h$ be the maximum value assigned to a vertex so far, and now adjust $t$ by subtracting $(h+1)$ from each label. Thus, we may now assume that all vertices in $\rho, s_1,\dots, s_{g-1}$ or $m_4^v,\dots, m_2^v$ for any $v$ in $V(G)$ have label $\leq -1$. Now set  $t(m_1^v) = 0$ and $t(r_0^v) = 0$, for each $v$ in $V(G)$.
    
    It is easy to see that so far $t$ satisfies the properties of an HGT-consistent labelling. It remains to label the vertices in the principal part of each gadget~$\Gad(v)$, and the leaves of each gadget, and the new vertices $q^v$ and $z^v$ for $v \in~V_{sol}$. We do this as follows.
    
    For $v \in~V_{sol}$, set $t(r_1^v)=t(w_1^v) = 12, t(r_2^v)=t(w_2^v) = 13, t(r_3^v)=t(w_3^v) = 14$, and $t(r_4^v) = t(q^v) = 15$. 
    Set $t(w_4^v) = 1, t(r_5^v) = t(w_5^v)=2, t(r_6^v) = t(w_6^v) = 3$, and $t(r_7^v) = t(w_7^v) = 4$.
    
    For $v \notin~V_{sol}$, set $t(r_1^v) = t(w_1^v) = 5$, and $t(r_i^v)=t(w_i^v) = i+4$ for every $i$ up to $t(r_7^v) = t(w_7^i) = 11$.
    
    Finally, for each leaf $\ell$ with parent $p$ set $t(\ell) = t(p) + 1$.
    
    It remains to observe that $t$ is a non-temporal labelling of $M$ and for every reticulation $r$ in $M$, $r$ has exactly one parent $p$ with $t(p)=t(r)$. Thus $t$ is an HGT-consistent labelling of $M$, and it follows from~\cref{thm:OrchIFFHori} that $M$ is orchard.  
    
    \medskip
    
    Suppose now that we have a set of arcs~$A_{sol}$ of~$N_G$ of size at most~$k$, such that adding leaves to the arcs in~$A_{sol}$ makes~$N_G$ orchard.
    % a solution~$A_{sol}$ of~$N_G$. 
    By \Cref{lem:LORDimpliesVC}, for every edge~$uv\in E(G)$, there exists an arc~$a\in A_{sol}$ that is an arc of the principal part of~$\Gad(u)$ or~$\Gad(v)$. 
    It follows immediately that the set~$\{v\in V(G): \text{$A_{sol}$ contains an arc of the principal part of~$\Gad(v)$}\}$ is a vertex cover of~$G$. 
    Since this is true for any such set of arcs~$A_{sol}$, it follows that if there is such an~$A_{sol}$ of size at most~$k$, then there must exist a vertex cover of~$G$ of size at most~$k$. 
\end{proof}

\end{document}